\numberwithin{equation}{section}
\begin{document}
	\newcommand{\R}{\mathbb{R}}
	\newcommand{\T}{\mathbb{T}}
	\newcommand{\Z}{\mathbb{Z}}
	\newcommand{\C}{\mathcal{C}}
	\newcommand{\U}{\mathbb{U}}
	\newcommand{\N}{\mathbb{N}}
	\newcommand{\W}{\mathcal{W}}
	\newcommand{\norm}[2]{ \left \lVert #1; \, #2  \right \rVert}
	\newcommand{\norma}[1]{ \left \lVert  #1 \right \rVert}
	\newcommand{\bra}[1]{\left \langle #1 \right \rangle}
	\DeclareRobustCommand{\sech}{sech }
	\newtheorem{theorem}{Theorem}
	\newtheorem{lemma}{Lemma}[section]
	\newtheorem{proposition}{Proposition}[section]
	\newtheorem{definition}{Definition}[section]
	
	\title[{\small Zakharov System on the Background of a Line Soliton.}]{Local Well-Posedness for the Zakharov System on the Background of a line Soliton}
	\date{}
	\maketitle     
	
	\vspace{ -1\baselineskip}

	{\small
		\begin{center}
			{\sc Hung Luong} \\
			Fak. Mathematik, University of Vienna, Oskar MorgensternPlatz 1, A-1090 Wien, Austria\\[10pt]
		\end{center}
	}
	
	\numberwithin{equation}{section}
	\allowdisplaybreaks

	\begin{quote}
		\footnotesize
		{\bf Abstract.}  
		We prove that the Cauchy problem for the two-dimensional Zakharov system is locally well-posed for initial data which are localized perturbations of a line solitary wave.  Furthermore, for this Zakharov system, we prove  weak convergence  to a nonlinear Schr\"odinger equation.
	\end{quote}
	
	\section{Introduction}
	In this paper, we study the initial value problem for the scalar version of the two dimensional Zakharov system 
	\begin{gather}
	\label{Zakharov_1} i \partial_t u + \Delta u = n u, \\
	\label{Zakharov_2} \frac{1}{\lambda ^2}\partial_t^2 n - \Delta n = \Delta(|u|^2),
	\end{gather}
	where $(x,y,t) \in \R^2 \times \R$, $\lambda$ is a fixed number, $u$ is a complex valued function, $n$ is a real function, with initial data
	\begin{equation}
	\label{Z system initial data}
	\begin{split}
	& u(x,y,0) = u_0(x,y) + Q(x), \,  n(x,y,0) = n_0(x,y) - Q(x)^2, \\
	&     n_t(x,y,0) = n_1(x, y).
	\end{split}
	\end{equation}
	The above function $Q(x) = 2 \sqrt{2}/(e^x + e^{-x})$ is the unique positive solution of the equation
	\[
	Q_{xx} - Q + Q^3 =0,
	\]
	and
	\begin{equation}
	\label{line soliton}
	(e^{it}Q(x), -Q(x)^2)
	\end{equation} 
	is a line soliton of \eqref{Zakharov_1}-\eqref{Zakharov_2}.
	
	And also note that $e^{it}Q(x)$ is a line soliton of the  cubic focusing nonlinear Schr\"odinger equation (NLS)
	\begin{equation}
	\label{NLS}
	i \partial_t u + \Delta u + |u|^2u=0.
	\end{equation}
	
	The Zakharov system is introduced in \cite{zakharov1972collapse} to describe  the propagation of Langmuir waves in plasma. For more details about the derivation and  the physical background of Zakharov system we refer to \cite{MR1696311} and \cite{zakharov1972collapse}.
	
	In \eqref{line soliton} the soliton is considered as a two dimensional (constant in $y$) object. A natural question is that of its transverse (with respect to $y$) stability. When looking for {\it localized} perturbations, one is lead, as a first step, to study the Cauchy problem  for the perturbed system below.
	
	Another possibility, looking for {\it $y-$ periodic } perturbations, would be to study \eqref{Zakharov_1}-\eqref{Zakharov_2} as a system posed on $\R \times \T$, the 1-d torus case was studied in \cite{bourgain1994, takaoka1999}. On the other hand, the Zakharov system in spatial space $\R^2$ and  $\R^3$  have been studied by many authors \cite{2AD-AD, 1AD_AD, BHHT,  MR1405972, MR1491547,  MR1262202, MR1262194, KPV, MR860310, 1Su-Su}
	
	The local well-posedness of \eqref{Zakharov_1}-\eqref{Zakharov_2} with initial data given by \eqref{Z system initial data} which will be proved in this paper can be viewed as the first step to study the transverse stability (or instability) of the line soliton \eqref{line soliton}. As far as we know, this problem is still an open problem. Concerning {\it global} issues we can quote  \cite{MR1262202, MR1262194} where the authors prove the existence of self-similar blow-up solutions and the instability by blow-up of periodic (in time) solutions (solitary wave solutions)  of the Zakharov system with data in $H^1(\R^2)$. Their method uses the radial symmetry of the system which is broken when one writes the system satisfied by a localized perturbation of the line solitary wave. 
	
	The transverse instability of the line solitary wave for some two dimensional models such as nonlinear Schr\"odinger equation, Kadomtsev \\-Petviashvili equation and for some general ``abstract" problems have been studied extensively in \cite{ MR2472893, MR2504040, MR2793858} but the framework of those papers does not seem to include the case of the Zakharov system.
	
	In other way, instead of the system \eqref{Zakharov_1}-\eqref{Zakharov_2} with the initial data of the form \eqref{Z system initial data} we can consider the following perturbed system 
	\begin{gather}
	\label{Z' system _1}  i \partial_t (u +Q) - (u+Q) + \Delta (u+Q) = (n - Q^2) (u + Q),\\
	\label{Z' system_2} \frac{1}{\lambda^2} \partial_t^2 (n - Q^2) - \Delta (n - Q^2) = \Delta (|u+Q|^2),
	\end{gather}
	with initial data
	\begin{equation}
	\label{Z' system initial data}
	u(x,y,0) = u_0, \; n(x,y,0) = n_0, \; n_t(x,y,0) = n_1.
	\end{equation}
	Note that in the above system, we used the change of variable $\bar{u}(x,y,t) := e^{it} u(x,y,t)$. 
	
	The main purpose of this paper is to prove  that the Cauchy problem \eqref{Z' system _1}-\eqref{Z' system_2}, \eqref{Z' system initial data} is locally well-posed in a suitable functional framework.  The main differences between the Zakharov system and its perturbation lie in the new terms containing ``$Q$". More precisely, 
	
	i) If we reduce system \eqref{Z' system _1}-\eqref{Z' system_2} to a nonlinear Schr\"odinger equation with  loss of derivative in the nonlinearity, then we can think of using the smoothing effect of Schr\"odinger operator as in \cite{KPV}. In our case, the linear terms $Qn$ and $Q^2 u$ will give trouble because the function $Q$ does not decay in $y$ at infinity.
	
	ii) We use the method of Bourgain (actually the techniques developed in  \cite{MR1405972, MR1491547} ) and the method of Schochet-Weinstein (see \cite{MR860310}) to obtain two versions of the local well-posedness result for system \eqref{Z' system _1}-\eqref{Z' system_2}. The differences with the case of the  unperturbed Zakharov system are the estimates for the new linear terms and the effects of $Q$ in each method.
	
	Compared to Bourgain method, Schochet-Weinstein method provides well-posednes in smaller Sobolev spaces but allows, when a suitable small parameter is included, to obtain the Schr\"{o}dinger limit. For the unperturbed system see  \cite{1AD_AD, OT1, MR860310}.
	
	iii) There are some difficulties concerning  blow-up and global existence issues. In the setting of \eqref{Z' system _1}-\eqref{Z' system_2}, the approach in  \cite{MR1262202}, \cite{MR1262194} does not make sense because we do not have radial symmetric solutions. We also do not have $L^2$ conservation or at least a bound on $L^2$ norm that makes the usual method to extend the local solution  impossible to apply.
	
	The rest of this paper is organized as follows.\\
	In Section \ref{conservation laws}, we present the energy conservation of system \eqref{Z' system _1}-\eqref{Z' system_2}.\\
	In Section \ref{Bourgain method}, we assume that $\lambda =1$ and use Bourgain's method  to prove the local well-posedness of system \eqref{Z' system _1}-\eqref{Z' system_2}, more precisely we have the following theorem.
	\begin{theorem}
		\label{Main result 1}
		Let $k$ and $l$ satisfy
		\begin{gather}
		\label{condition of k and l} k> l \geq 0, \qquad l+1 \geq k \geq \frac{l+1}{2}.
		\end{gather}
		Then the system \eqref{Z' system _1}-\eqref{Z' system_2} with initial data $(u_0,\, n_0, \, n_1) \in H^k \times H^l \times H^{l-1} $ is locally well posed in $X_1^{k, b_1} \times X_2^{l,b} \times X_2^{l-1,b}$ for suitable $b, \, b_1$ close to $1/2$ .\\
		Furthermore the solutions satisfy
		\begin{equation}
		\label{C(H^k)} (u,n , \partial_t n ) \in \mathscr{C} ([0,T],H^k \times H^l \times H^{l-1}),
		\end{equation}
		where $T$ is the existence time.
	\end{theorem}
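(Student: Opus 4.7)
The plan is to reduce the perturbed system to a standard Schr\"odinger–wave system with additional linear potential‐type terms coming from $Q$, and then run a contraction on Bourgain spaces. First I would expand \eqref{Z' system _1}--\eqref{Z' system_2}, use $Q_{xx}-Q+Q^{3}=0$ to cancel the time-independent forcing, and obtain the equivalent system
\begin{equation*}
i\partial_{t}u+\Delta u=(1-Q^{2})u+n(u+Q),\qquad \partial_{t}^{2}n-\Delta n=\Delta(|u|^{2}+2Q\,\mathrm{Re}\,u).
\end{equation*}
The wave equation is then rewritten as a first-order system in the variables $n_{\pm}=\tfrac{1}{2}(n\pm(-\Delta)^{-1/2}\partial_{t}n)$ so that both equations become Duhamel equations driven by half-wave/Schr\"odinger propagators, allowing standard $X^{s,b}$ linear estimates with a time cutoff $\psi_{T}$.

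Next I would set up a Picard iteration on a closed ball of $X_{1}^{k,b_{1}}\times X_{2}^{l,b}\times X_{2}^{l,b}$ for $b,b_{1}$ slightly above $1/2$. Using the standard $X^{s,b}$ free and Duhamel estimates (with $T^{\varepsilon}$ gain whenever the modulation exponent on the right is $<1/2$), the whole argument reduces to two families of nonlinear/linear bounds: (a) the \emph{bilinear} estimates
\begin{equation*}
\|nu\|_{X_{1}^{k,b_{1}-1}}\lesssim \|n\|_{X_{2}^{l,b}}\|u\|_{X_{1}^{k,b_{1}}},\qquad \|\Delta(u\bar v)\|_{X_{2}^{l-1,b-1}}\lesssim \|u\|_{X_{1}^{k,b_{1}}}\|v\|_{X_{1}^{k,b_{1}}},
\end{equation*}
and (b) the new \emph{linear} bounds coming from the soliton,
\begin{equation*}
\|(1-Q^{2})u\|_{X_{1}^{k,b_{1}-1}}+\|Qn\|_{X_{1}^{k,b_{1}-1}}+\|\Delta(Q\,\mathrm{Re}\,u)\|_{X_{2}^{l-1,b-1}}\lesssim T^{\varepsilon}\bigl(\|u\|_{X_{1}^{k,b_{1}}}+\|n\|_{X_{2}^{l,b}}\bigr).
\end{equation*}

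For (a) I would follow the Ginibre–Tsutsumi–Velo scheme of \cite{MR1405972, MR1491547}: dyadic decomposition in spatial frequency and in modulation, use the Schr\"odinger/wave $L^{4}$-Strichartz estimates in two dimensions, and a careful bookkeeping of the resonance relation between the three free surfaces. The algebraic condition $l+1\geq k\geq(l+1)/2$ in \eqref{condition of k and l} is precisely the one that makes both of these estimates close in 2D. For (b) I would exploit that $Q(x)$ is Schwartz in $x$: multiplication by $Q$ or $Q^{2}$ is a smoothing operator in $\xi_{x}$ (convolution against a Schwartz function) that preserves $H^{s}(\mathbb{R}^{2})$ and commutes with every time–frequency multiplier, so it acts boundedly on every $X_{i}^{s,b}$; the smallness factor $T^{\varepsilon}$ is produced by the time-localisation lemma in $X^{s,b}$ applied to the time-independent potential $Q$.

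The main obstacle will be bound (b), specifically the term $Qn$: the wave unknown $n$ sits at regularity $l<k$ but has to be placed at the Schr\"odinger regularity $k$, and $Q$ provides no decay in $y$, so it cannot gain regularity in $\eta$. The constraint $k-1\leq l$ (built into \eqref{condition of k and l}) is what lets this estimate go through, with the one missing derivative coming from the Schr\"odinger-type modulation loss $b_{1}-1$. Once (a) and (b) are established, the contraction mapping principle gives existence, uniqueness, and Lipschitz continuous dependence on the data in the stated spaces; the persistence \eqref{C(H^k)} follows from the embedding $X^{s,b}\hookrightarrow \mathscr{C}_{t}H^{s}$ valid for $b>1/2$, applied to each component of the solution.
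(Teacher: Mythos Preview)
Your outline has the right architecture, but there is a real gap in step~(b). The assertion that multiplication by $Q$ ``commutes with every time--frequency multiplier, so it acts boundedly on every $X_i^{s,b}$'' is false: $Q=Q(x_1)$ acts by convolution in the $\xi_1$ variable only, while the modulation weights $\langle\tau+|\xi|^2\rangle$ and $\langle\tau\pm|\xi|\rangle$ depend on $\xi_1$, so no commutation holds. More to the point, for the term $Qn$ you must carry $n\in X_2^{l,b}$ (wave modulation, regularity $l$) into a Schr\"odinger-type space at regularity $k>l$; neither the regularity gap $k-l$ nor the change of dispersion surface is handled by ``$Q$ is Schwartz in $x$'' alone. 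The paper does this via an explicit resonance inequality,
\[
\langle\xi\rangle^{2}\ \lesssim\ \langle\tau\pm|(\xi_1-\xi_1',\xi_2)|\rangle+\langle\tau+|\xi|^2\rangle+\langle\xi_1'\rangle,
\]
which trades the missing $\langle\xi\rangle^{k-l}$ against one of the two modulations or against a power of $\langle\xi_1'\rangle$; it is precisely this last option that the rapid decay of $\mathcal F_x Q$ in $\xi_1'$ can absorb. A companion inequality handles $\omega(Q\,\mathrm{Re}\,u)$, and at the endpoint $k=l+1$ a sharper version with a characteristic-function cutoff is needed to kill a logarithmic divergence. Without this (or an equivalent) computation the bound $\|Qn\|_{X_1^{k,b_1-1}}\lesssim\|n\|_{X_2^{l,b}}$ is unproved.

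There is also a problem with your persistence argument. The paper does \emph{not} run the contraction with $b,b_1>1/2$: for $k<l+1$ it takes $b,b_1<1/2$, and at the endpoint $k=l+1$ the bilinear estimate for $n_{\pm}u$ forces $c_0\geq 1/2$, which together with the Duhamel lemma pins $b_1=1/2$. In either regime the embedding $X^{s,b}\hookrightarrow\mathscr C_tH^s$ is unavailable, and continuity in time is obtained instead through separate estimates in the auxiliary spaces $Y_1^k,Y_2^l$ (norm $\|\langle\xi\rangle^s\langle\tau+\phi(\xi)\rangle^{-1}\widehat f\,\|_{L^2_\xi L^1_\tau}$) combined with Lemma~\ref{C(R,H^s)}. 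Your final sentence therefore does not cover the full range \eqref{condition of k and l}, and in particular fails at $k=l+1$.
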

	(The Bourgain space $X_j^{s,b}$ will be defined as \eqref{Bourgain space} in section \ref{Bourgain method}.)
	
	In Section \ref{Schochet - Weinstein method}, we present the proof by using the method of Schochet and Weinstein. We get a local (in time) uniform bound in a suitable Sobolev space that allows us to pass to limit  to a nonlinear Schr\"odinger type equation (NLS) when $\lambda$ tends to $\infty$.  In particular we have the following theorem. 
	\begin{theorem}
		\label{Main result 2}
		Let $s>2$ and consider the initial value problem \eqref{Z' system _1}-\eqref{Z' system_2} with initial data of the form 
		\[ 
		u(0)=u_0, \quad n(0)=n_0, \quad n_t(0) = \nabla \cdot f_0.
		\]
		Suppose that
		\begin{equation}
		\label{initial estimate}
		\norma{u_0(\lambda)}_{H^{s+1}} + \norma{n_0(\lambda)}_{H^s} + \frac{1}{\lambda} \norma{f_0(\lambda)}_{H^s}  \leq C_1.
		\end{equation}
		Then \eqref{Z' system _1}-\eqref{Z' system_2} has a unique solution
		\[ (u,n) \in L^\infty([0,T]; H^{s+1}(\mathbb{R}^2)) \times L^\infty([0,T]; H^s(\mathbb{R}^2)) ,\]
		where $[0,T]$ is the time interval of existence, $T$ depends only on $C_1$. In addition, the solution $(u,n)$ satisfies
		\begin{equation}
		\label{uniform estimate}
		\begin{split}
		&\norma{u (t, \lambda)}_{H^{s+1}}+ \norma{u_t (t, \lambda)}_{H^{s-1}} + \norma{n(t, \lambda)}_{H^{s}} +  \frac{1}{\lambda} \norma{n_t (t, \lambda)}_{H^{s-1}} \\
		& + \frac{1}{ \lambda^2} \norma{n_{tt} (t, \lambda)}_{H^{s-2}}  \leq C_2,
		\end{split}
		\end{equation}
		for all $t \in [0,T]$.
	\end{theorem}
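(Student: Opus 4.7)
My plan is to follow the scheme of Schochet-Weinstein \cite{MR860310}, adapted to the $Q$-dependent terms in \eqref{Z' system _1}-\eqref{Z' system_2}. The first step is to reduce the second-order wave equation to a first-order system by introducing an auxiliary vector field $w$ with $w(0) = f_0/\lambda$ satisfying
\[
n_t = \lambda\,\nabla\cdot w, \qquad w_t = \lambda\,\nabla n + \lambda\,\nabla\bigl(|u|^2 + 2Q\,\mathrm{Re}\,u\bigr),
\]
where I have used $Q_t = 0$ to rewrite the forcing as $\Delta(|u+Q|^2 - Q^2) = \Delta(|u|^2 + 2Q\,\mathrm{Re}\,u)$. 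Using the soliton relation $Q_{xx} - Q + Q^3 = 0$, the Schr\"odinger equation \eqref{Z' system _1} reduces to $iu_t + \Delta u = u - Q^2 u + nu + nQ$. The hypothesis \eqref{initial estimate} controls $\|u_0\|_{H^{s+1}} + \|n_0\|_{H^s} + \|w(0)\|_{H^s}$, and well-posedness of the original system becomes equivalent to well-posedness of the coupled $(u,n,w)$ system.

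The naive energy $\|u\|_{H^{s+1}}^2 + \|n\|_{H^s}^2 + \|w\|_{H^s}^2$ picks up an $O(\lambda)$ error from the forcing $\lambda\,\nabla(|u|^2 + 2Q\,\mathrm{Re}\,u)$ in the $w$-equation, which would prevent uniform-in-$\lambda$ estimates. I therefore apply the Schochet-Weinstein compensation and work with
\[
N := n + |u|^2 + 2Q\,\mathrm{Re}(u),
\]
which satisfies $\lambda^{-2}\partial_t^2 N = \Delta N + \lambda^{-2}\partial_t^2(|u|^2 + 2Q\,\mathrm{Re}\,u)$; the $\lambda^{-2}$ prefactor on the forcing is precisely what makes uniform estimates possible, each $\partial_t$ on $u$ being traded via the Schr\"odinger equation for two spatial derivatives. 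Setting
\[
E_s(t) := \|u(t)\|_{H^{s+1}}^2 + \|N(t)\|_{H^s}^2 + \|w(t)\|_{H^s}^2,
\]
differentiating in $t$, and applying Kato-Ponce commutator estimates together with the Sobolev embedding $H^s \hookrightarrow L^\infty(\R^2)$ (valid for $s>1$, hence certainly for $s>2$), I expect to obtain
\[
\frac{d}{dt} E_s(t) \leq C\bigl(1 + E_s(t)\bigr)^{p}
\]
with $C$ independent of $\lambda$, so that Gr\"onwall yields a uniform existence time $T = T(C_1)$. All terms involving $Q$ are handled as bounded multipliers: $Q$ and its derivatives all lie in $L^\infty(\R^2)$, so even though $Q$ does not decay in $y$ it acts continuously on every $H^\sigma$ and its contribution to $\tfrac{d}{dt}E_s$ is of order $E_s$.

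Existence of a solution is then constructed by a standard Friedrichs mollifier / compactness argument applied to the coupled $(u,n,w)$ system, so that the uniform estimate above yields an existence time independent of the mollification parameter and of $\lambda$; uniqueness follows from an $L^2$-level energy estimate for the difference of two solutions. Finally, the bound \eqref{uniform estimate} is read off from the equations: $\|u_t\|_{H^{s-1}}$ directly from the Schr\"odinger equation, $\lambda^{-1}\|n_t\|_{H^{s-1}}$ from the relation $n_t = \lambda\,\nabla\cdot w$, and $\lambda^{-2}\|n_{tt}\|_{H^{s-2}}$ from the wave equation together with the control of $\Delta(|u+Q|^2 - Q^2)$.

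The main obstacle is the derivative loss intrinsic to the Zakharov coupling: the forcing $\Delta(|u|^2)$ of the wave equation costs two derivatives of $u$, which is exactly what the compensation by $N$ repairs. The delicate point is to carry out this compensation while keeping every constant independent of $\lambda$, since this uniformity is needed for the subsequent Schr\"odinger limit $\lambda \to \infty$. The $Q$-terms add a modest bookkeeping burden: one must verify that the compensation identity still works with the extra piece $2Q\,\mathrm{Re}\,u$ (it does, thanks to the form of $\Delta(|u+Q|^2 - Q^2)$) and that the linear terms $nQ$ and $Q^2 u$ in the Schr\"odinger equation are controlled in the energy via $L^\infty$ bounds on $Q$ rather than any $L^2$-integrability in $y$, which $Q$ lacks.
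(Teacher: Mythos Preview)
Your proposal correctly identifies the Schochet--Weinstein compensation $N = n + |u|^2 + 2Q\,\mathrm{Re}\,u$ as the device that removes the $O(\lambda)$ forcing from the $w$-equation, but it contains a genuine gap: contrary to what you assert in the last paragraph, this compensation does \emph{not} repair the derivative loss. After the substitution the $N$-equation reads $N_t = \lambda\,\nabla\!\cdot w + (|u|^2 + 2Q\,\mathrm{Re}\,u)_t$, and the residual forcing $(|u|^2)_t = -2\,\mathrm{Im}(\bar u\,\Delta u) + \text{(l.o.t.)}$ still costs one derivative too many: estimating $\langle J^s N,\, J^s(|u|^2)_t\rangle$ either asks for $u\in H^{s+2}$ or, after writing $(|u|^2)_t = -2\,\nabla\!\cdot\mathrm{Im}(\bar u\,\nabla u)$ and integrating by parts, for $N\in H^{s+1}$. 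Symmetrically, in $\tfrac{d}{dt}\|u\|_{H^{s+1}}^2$ the commutator $[J^{s+1},N]u$ coming from the coupling $Nu$ requires $\|N\|_{H^{s+1}}$. These two one-derivative losses are coupled, and a direct Kato--Ponce argument on your three-component energy $E_s$ does not close.

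The missing step---and the actual content of the Schochet--Weinstein scheme---is to take the \emph{gradient} of the Schr\"odinger equation and promote $\nabla u$ to an independent unknown. Writing $\sqrt{2}\,u = F+iG$, $\sqrt{2}\,\nabla u = H+iL$, $P=N$, $V=w$, one obtains a nine-component system
\[
U_t + \sum_{j=1}^2\bigl(A^j(U)+B^j(\phi)+\lambda C^j\bigr)U_{x_j} + \cdots = K\,\Delta U
\]
in which the matrices $A^j,B^j,C^j$ are \emph{symmetric} and $K$ is antisymmetric. The quasilinear coupling now appears as $F\,\nabla\!\cdot L - G\,\nabla\!\cdot H$ in the $P$-equation paired against $F\,\nabla P$, $-G\,\nabla P$ in the $(L,H)$-equations; symmetry makes these cancel in the $H^s$ energy (all nine components at the \emph{same} level $H^s$) up to commutators and a harmless $\partial_j A^j$ term. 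This is exactly how the paper proceeds (Theorem~\ref{local wellposedness for symmetric system}), with the further device of estimating $\|J^s_{x_1}U\|_{L^2}$ and $\|J^s_{x_2}U\|_{L^2}$ separately: the commutator estimate \eqref{commutator estimate} requires $\nabla f\in H^{s-1}$, which fails for $f=Q$ on $\R^2$ since $Q$ does not decay in $y$, but holds for the one-dimensional commutator $[J^s_{x_1},Q]$, while $[J^s_{x_2},Q]=0$ trivially. Your remark that ``$Q\in L^\infty$ suffices'' glosses over this point.
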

	
	In Section \ref{a weak convergence result}, using the uniform bound in the Theorem \ref{Main result 2} we establish a weak convergence result stating that the local solution $(u(\lambda), n(\lambda))$ of \eqref{Z' system _1}-\eqref{Z' system_2} with initial data \eqref{Z' system initial data}  tends to $(u, -|u|^2)$ weakly when $\lambda$ tends to $\infty$. Where $u$ is the unique solution of the following perturbation of the nonlinear Schr\"odinger equation
	\begin{equation}
	\label{PNLS}
	i (u + Q)_t - (u + Q) + \Delta (u + Q) + |u+ Q|^2 (u + Q) = 0 
	\end{equation}
	with $u(x,y,0) = u_0$.
	
	\textit{Remark}: The local well-posedness of \eqref{PNLS} in a Sobolev space $H^s$ with $s \geq 1$ can be obtained by  using Strichartz estimate. Unfortunately, the global existence in the focusing case is still unknown to us.
	
	\textbf{Notations}:\\
	$\mathcal{F}, \, \mathcal{F}_t, \, \mathcal{F}_x, \, \mathcal{F}_y$ and $\mathcal{F}^{-1}$ denote  the  Fourier transform of a function in spacetime, time, space variable and the inverse Fourier transform respectively. We also use `` $\widehat{}$ ''  as the short notation of the space-time Fourier transform.\\
	$H^s$ is the usual Sobolev space. \\
	$|(\xi_1,\xi_2)| = (\xi_1^2 +\xi_2^2)^{1/2} $, $\bra{\xi} = (1 + |\xi|^2)^{1/2}$ where $\xi=(\xi_1,\xi_2) \in \R^2$.\\
	$D_x = -i  \, d/dx$. \\
	$\norma{.}_2$ : The $L^2$ norm in space and time.\\
	$\norma{u; X}$ or $\norma{u}_X$ : The norm of  function $u$ in  functional space $X$.\\
	$C$ will be a general constant unless otherwise explicitly indicated. $f \lesssim g$ means that there exits a constant $C$ such that $f \leq C g$.
	\section{Conservation law}
	\label{conservation laws}
	The system \eqref{Z' system _1}-\eqref{Z' system_2} can be rewritten as follows 
	\begin{gather}
	\label{Z''' system _1} i \partial_t u + \Delta u - u = n u + Q n - Q^2 u \\
	\label{Z''' system _2} n_t = \lambda^2 \nabla \cdot \mathbf{v} \\
	\label{Z''' system _3} \partial_t \mathbf{v} - \nabla n = \nabla (|u|^2) + 2 \nabla (Q Re (u) ).
	\end{gather}
	\begin{proposition}
		\label{proposition conservation laws}
		Let $(u, n, \mathbf{v})$ be a solution of system \eqref{Z''' system _1}-\eqref{Z''' system _3} obtained in Theorem \ref{Main result 1}, defined in the time interval $[0,T]$. Then 
		\begin{equation}
		\label{energy of perturbed system}
		\frac{d}{dt} E(t) = 0, \quad 0 \leq t \leq T,
		\end{equation}
		where 
		\begin{equation}
		\label{energy formula}     
		\begin{split}
		& E(t) \\
		= &  \int_{\R^2} \left( |\nabla u|^2 + |u|^2 + \frac{n^2}{2} + \frac{\lambda^2 |\mathbf{v}|^2}{2} - Q^2 |u|^2 + n (|u|^2 + 2 Q Re(u)) \right) dx dy
		\end{split}
		\end{equation}    
	\end{proposition}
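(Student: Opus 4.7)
The argument is the classical energy-identity method. At the regularity $(u,n,n_t)\in\mathscr{C}([0,T],H^k\times H^l\times H^{l-1})$ provided by Theorem \ref{Main result 1}, several of the manipulations below are not \emph{a priori} justified, so I would first run the calculation on smooth approximating data (mollifying in space and using persistence of regularity) and then pass to the limit using the continuity of the flow; this reduces matters to deriving $E'(t)=0$ formally. All integrals below are over $\mathbb{R}^2$ in $dx\,dy$.

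It is convenient to introduce the self-adjoint operator $L = -\Delta + 1 - Q^2$, so that \eqref{Z''' system _1} reads $Lu = iu_t - nu - Qn$ and the first, second and fifth integrands of $E(t)$ add up to $\int \bar u\, Lu$. Differentiating in time and using the self-adjointness of $L$,
\begin{equation*}
\frac{d}{dt}\int\bigl(|\nabla u|^2 + |u|^2 - Q^2|u|^2\bigr) \;=\; 2\,Re\!\int (Lu)\,\bar u_t \;=\; 2\,Re\!\int (iu_t-nu-Qn)\,\bar u_t,
\end{equation*}
the $iu_t\bar u_t$ contribution being purely imaginary; what survives is $-\int n\,(|u|^2)_t\;-\;2\int Qn\,Re(u_t)$. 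For the ``wave'' block, \eqref{Z''' system _2} gives $\frac{d}{dt}\int \tfrac12 n^2 = \lambda^2\!\int n\,\nabla\!\cdot\!\mathbf v$, and \eqref{Z''' system _3} gives $\frac{d}{dt}\int \tfrac{\lambda^2}{2}|\mathbf v|^2 = \lambda^2\!\int \mathbf v\cdot \nabla n + \lambda^2\!\int \mathbf v\cdot \nabla\bigl(|u|^2+2Q\,Re(u)\bigr)$; the two $\lambda^2\!\int n\,\nabla\!\cdot\!\mathbf v+\lambda^2\!\int\mathbf v\cdot\nabla n$ pieces cancel by integration by parts, and the remaining piece becomes $-\int n_t\,|u|^2\;-\;2\int n_t Q\,Re(u)$ after a further integration by parts and use of $n_t=\lambda^2\nabla\!\cdot\!\mathbf v$. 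Finally, Leibniz gives
\begin{equation*}
\frac{d}{dt}\!\int n\bigl(|u|^2+2Q\,Re(u)\bigr) = \!\int n_t|u|^2 + \!\int n(|u|^2)_t + 2\!\int n_t Q\,Re(u) + 2\!\int nQ\,Re(u_t).
\end{equation*}

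Summing the three blocks, the four cross-quantities $\int n(|u|^2)_t$, $\int Qn\,Re(u_t)$, $\int n_t|u|^2$ and $\int n_t Q\,Re(u)$ each appear twice with opposite signs and cancel pairwise, yielding $E'(t)=0$. The computation is essentially algebraic bookkeeping once the three equations have been substituted; the only mild obstacle is the regularity justification mentioned at the beginning, which is standard given the continuity statement of Theorem \ref{Main result 1}. The presence of the line soliton $Q$ does not cause any new difficulty here because $Q$ is time-independent and depends only on $x$, so the integrations by parts that produce the cancellations still close.
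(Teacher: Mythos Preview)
Your proof is correct and follows essentially the same route as the paper. The paper multiplies \eqref{Z''' system _1} by $\bar u_t$ and \eqref{Z''' system _3} by $\lambda^2\mathbf v$, integrates, and adds the two resulting identities; your version differentiates each block of $E(t)$ directly and then tracks the cancellations, which is the same computation organized slightly differently. Both invoke smoothing of the initial data to justify the manipulations at the regularity of Theorem~\ref{Main result 1}.
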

	\begin{proof}
		To obtain \eqref{energy of perturbed system} we proceed formally. A rigorous proof can be obtained by smoothing the initial data and passing to the appropriate limit.\\
		We multiply \eqref{Z''' system _1} by $\partial_t \bar{u}$, integrate and take its real part to get 
		\begin{equation}
		\label{proof of energy formulation_1}
		\int \partial_t ( |\nabla u|^2 + |u|^2 - Q^2 |u|^2) + n \partial_t (|u|^2 + 2 Q Re (u)) = 0.
		\end{equation}
		We also multiply \eqref{Z''' system _3} by $\nabla^{-1} n_t$ or $\lambda^2 \mathbf{v}$ and integrate to get
		\begin{equation}
		\label{proof of energy formulation_2}
		\int \partial_t ( \frac{\lambda^2 |\mathbf{v}|^2}{2} + \frac{n^2}{2}  ) + ( |u|^2 + 2 Q Re(u)) n_t = 0.
		\end{equation}
		Combining \eqref{proof of energy formulation_1} and \eqref{proof of energy formulation_2} we obtain \eqref{energy of perturbed system}. 
	\end{proof}
	\textit{Remark}: The energy space of system \eqref{Z' system _1}-\eqref{Z' system_2} is $H^1 \times L^2 \times H^{-1}$ and the Theorem \ref{Main result 1} gives the local well-posedness of \eqref{Z' system _1}-\eqref{Z' system_2} in the energy space. It is expected to get a global solution with small initial data in the energy space, but in our case, the difficulty is the lack of $L^2$ conservation.
	
	\section{Bourgain method}
	\label{Bourgain method}
	\subsection{Linear estimate}
	\label{Linear estimate}
	Throughout this Section we will assume that $\lambda=1$ and  split the function $n$ in \eqref{Z' system _1}-\eqref{Z' system_2} into two parts 
	\[ n = n_{+} + n_{-} , \]
	where $n_{\pm}= n \pm i \omega^{-1} \partial_t n$, $ \omega= (-\Delta)^{1/2} $,
	then the system \eqref{Z' system _1}-\eqref{Z' system_2}  can be rewritten as 
	\begin{gather}
	\label{Z'' system_1} i \partial_t  u = - \Delta u + u + \frac{n_{+} +  n_{-}}{2} u + \frac{n_{+} +  n_{-}}{2} Q - Q^2 u, \\
	\label{Z'' system_2} (i \partial_t \mp \omega) n_{\pm} = \pm \omega (|u|^2) \pm 2 \omega ( Q Re(u)).
	\end{gather}
	The equations \eqref{Z'' system_1} and \eqref{Z'' system_2} have the form 
	\begin{equation}
	\label{Single equation}
	i \partial_t u = \phi (-i \nabla) u + f(u),
	\end{equation}
	where $\phi$ is a real function defined in $\R^2$ and $f$ some  nonlinear function . The Cauchy problem for \eqref{Single equation} with initial data $u_0$ is rewritten as the integral equation
	\begin{gather}
	\begin{split}
	\label{Single integral equation}
	u(t) & = U(t) u_0 - i \int_0^t d t' U(t-t') f(u(t')) \\
	& = U(t) u_0 - i U *_R f(u),
	\end{split} 
	\end{gather}
	where $ U(t) = e^{-it \phi (-i \nabla)}$ and $ *_R$ denotes the retarded convolution in time.\\
	
	Let $ \psi_1 \in \mathscr{C}^\infty (\R, \R^+) $ be even with $0 \leq \psi_1 \leq 1$, $\psi_1(t) =1 $ for $|t| <1$, $ \psi_1(t) =0 $ for $ |t| \geq 2 $ and let $ \psi_T = \psi_1 (t/T) $ for $ 0 < T \leq 1$.\\
	One then replaces equation \eqref{Single integral equation} by the cut off equation
	\begin{equation}
	\label{cutoff equation}
	u(t) = \psi_1 (t) U(t) u_0 - i \psi_T(t) \int_0^t dt' \, U(t-t') f(u(t')).
	\end{equation}
	Then, we have the cut off integral equations associated with \eqref{Z'' system_1} and \eqref{Z'' system_2} namely
	\begin{equation}
	\label{Z" integral eq_1}
	\begin{split}
	u(t) = \psi_1(t) U(t) u_0 - (i/2) \psi_T(t) \int_0^t dt' \, U(t-t') \big ( & u + (u + Q) (n_+ + n_-) \\
	& - 2 |Q|^2 u \big ) (t'),	
	\end{split}
	\end{equation}
	\begin{equation}
	\label{Z" integral eq_2}
	n_{\pm}(t) = \psi_1(t) V_{\pm}(t) \, n_{\pm 0} \, \mp \, i \psi_T(t) \int_0^t dt' \, V_{\pm}(t-t') \, \omega \big ( |u|^2 +2 Q Re(u) \big )(t') ,
	\end{equation}
	where $ U(t) = e^{it\Delta}$ and $ V_{\pm} (t)= e^{\mp i \omega t} $.\\
	We use a standard contraction method on the two operators on the right hand side of \eqref{Z" integral eq_1}-\eqref{Z" integral eq_2} with $u \in X_1^{k,b_1}$ and $n_{\pm} \in X_2^{l,b} $. $X_1^{k,b_1}$ and $X_2^{l,b}$ are the Bourgain spaces associated to two operators with symbols $\phi_1(\xi)= |\xi|^2$ and $\phi_2(\xi)=\pm |\xi|$, which are given by the following definition
	\begin{equation}
	\label{Bourgain space}
	\norm{u}{X_j^{s,b}} = \norma{\bra{\xi}^s \, \bra{\tau + \phi_j (\xi )}^b \, \widehat{u}(\xi , \tau)}_2, 
	\end{equation}
	where $\phi_j(\xi)$ is the symbol of the associated differential operator.\\
	We will also use the following definition of space time Sobolev space,
	\[
	\norm{u}{H^{s,b}} = \norma{\bra{\xi}^s \, \bra{\tau}^b \, \widehat{u}(\xi, \tau) }_2.
	\]
	The linear estimate is given by the following lemma
	\begin{lemma} (cf. \cite{MR1491547} Lemma 2.1)
		\label{Lemma 2.1 in [1]}
		(i) Let $  b' \leq 0 \leq b \leq b' +1  $ and $ T \leq 1 $. Then
		\begin{equation}
		\label{linear estimate 1}
		\begin{split}
		& \norm{\psi_T \, U *_R f}{X^{s,b}} \\
		& \quad \leq C \sqrt{2} \{ T^{1-b+b'} \norm{f}{X^{s,b'}} + T^{1/2 - b} \norm{\mathcal{F}^{-1} \chi (|\tau| T \geq 1) \mathcal{F} f}{Y^s} \},
		\end{split} 
		\end{equation}
		where 
		\[
		\norm{f	}{Y^s} = \norm{\bra{\xi}^s \bra{\tau + \phi(\xi)}^{-1} \, \hat{f} (\xi, \tau)}{L^2_{\xi} L^1_\tau}.
		\]
		(ii) Suppose in addition that $b' > -1/2$. Then
		\begin{equation}
		\label{linear estimate 2}
		\norm{\psi_T \, U *_R f}{X^{s,b}} \leq C T^{1-b+b'} \norm{f}{X^{s,b'}}.
		\end{equation}
	\end{lemma}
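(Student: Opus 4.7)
The plan is to reduce the whole statement to a purely temporal estimate via the standard unitary conjugation, and then handle the time integral with a frequency decomposition of $\hat g$ at the scale $|\tau|T \sim 1$.

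First, because $U(t)=e^{-it\phi(-i\nabla)}$ is an isometry in every $L^2_\xi$-based space and pulls the phase $\langle\tau+\phi(\xi)\rangle^b$ back to $\langle\tau\rangle^b$, I would set $g(t'):=U(-t')f(t')$, write $(U*_R f)(t)=U(t)\int_0^t g(t')\,dt'$, and observe $\norma{g}_{H^{s,b'}}=\norm{f}{X^{s,b'}}$ and $\norma{\psi_T U*_R f}_{X^{s,b}} = \norma{\psi_T Lg}_{H^{s,b}}$, where $Lg(t):=\int_0^t g(t')\,dt'$. Since the $\langle\xi\rangle^s$ weight commutes with $L$ and $\psi_T$, it is enough to prove the scalar temporal statement
\[
\norma{\psi_T Lg}_{H^b_t} \lesssim T^{1-b+b'}\norma{g}_{H^{b'}_t} + T^{1/2-b}\,\norma{\chi(|\tau|T\geq 1)\langle\tau\rangle^{-1}\hat g(\tau)}_{L^1_\tau}.
\]

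Next, I would split $\hat g = \hat g_{\mathrm{lo}}+\hat g_{\mathrm{hi}}$ with cutoffs $\chi(|\tau|T<1)$ and $\chi(|\tau|T\geq 1)$, and use the kernel representation $Lg(t)=(2\pi)^{-1}\int \frac{e^{it\tau}-1}{i\tau}\hat g(\tau)\,d\tau$. On the low-frequency piece I would Taylor-expand $\frac{e^{it\tau}-1}{i\tau}=\sum_{k\geq 1}\frac{(it)^k\tau^{k-1}}{k!}$, which converges geometrically on the joint support $|t\tau|\leq 2$ once multiplied by $\psi_T(t)$. Each resulting summand is of the form $\psi_T(t)t^k\cdot(\text{const in }t)$; scaling gives $\norma{\psi_T(t)t^k}_{H^b_t}\lesssim T^{k+1/2-b}$, and Cauchy–Schwarz against $\langle\tau\rangle^{b'}\hat g$ on $|\tau|T<1$ (using $\langle\tau\rangle^{-1-b'}\in L^2(|\tau|T<1)$ with weight $T^{1/2+b'}$ since $b\leq b'+1$) yields the desired $T^{1-b+b'}\norma{g}_{H^{b'}}$ contribution.

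For the high-frequency piece I would decompose $\frac{e^{it\tau}-1}{i\tau}=\frac{e^{it\tau}}{i\tau}-\frac{1}{i\tau}$. The constant-in-$t$ term is $\psi_T(t)$ times a scalar, and $\norma{\psi_T}_{H^b_t}\lesssim T^{1/2-b}$ produces exactly the $T^{1/2-b}\|{\cdot}\|_{Y^s}$ term of the bound. The time-dependent term is a Fourier multiplier whose $H^b$ norm is $\norma{\chi(|\tau|T\geq 1)\langle\tau\rangle^{b-1}\hat g}_{L^2_\tau}$; on $|\tau|T\geq 1$ one has $\langle\tau\rangle^{b-1}\lesssim T^{1-b+b'}\langle\tau\rangle^{b'}$ (using $b\leq b'+1$ and $b'\leq 0$), absorbing it into the first summand. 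Finally, to derive (ii), I would bound the $Y^s$ term by the $X^{s,b'}$ term via Cauchy–Schwarz,
\[
\norma{\chi(|\tau|T\geq 1)\langle\tau\rangle^{-1}\hat g}_{L^1_\tau}\leq \norma{\langle\tau\rangle^{b'}\hat g}_{L^2_\tau}\Bigl(\int_{|\tau|T\geq 1}\langle\tau\rangle^{-2-2b'}d\tau\Bigr)^{1/2}\lesssim T^{1/2+b'}\norma{g}_{H^{b'}_t},
\]
valid precisely when $b'>-1/2$, and $T^{1/2-b}\cdot T^{1/2+b'}=T^{1-b+b'}$ matches the first term on the right-hand side.

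The main obstacle is bookkeeping: making sure the various $T$-powers emerging from the Taylor expansion, from the $\psi_T$ estimates in $H^b_t$, and from the Cauchy–Schwarz in $\tau$ line up with the stated exponent $1-b+b'$, and that the constants are uniform in $T\in(0,1]$ and in the range $b'\leq 0\leq b\leq b'+1$. Everything else is routine Fourier analysis, with the real subtlety being that the logarithmic endpoint $b'=-1/2$ in (ii) forces the introduction of the $Y^s$ auxiliary norm in (i).
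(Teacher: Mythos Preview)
The paper does not give its own proof of this lemma: it is stated with the attribution ``(cf.\ \cite{MR1491547} Lemma 2.1)'' and then used as a black box, so there is no in-paper argument to compare against. Your proposal is essentially the original Ginibre--Tsutsumi--Velo proof that the paper is citing: conjugate by $U(t)$ to reduce to a scalar-in-time estimate, split $\hat g$ at the scale $|\tau|T\sim 1$, Taylor-expand $(e^{it\tau}-1)/(i\tau)$ on the low piece, and split $(e^{it\tau}-1)/(i\tau)=e^{it\tau}/(i\tau)-1/(i\tau)$ on the high piece; then recover (ii) from (i) by Cauchy--Schwarz when $b'>-1/2$. This is correct and is exactly the approach of the cited reference.

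Two small points worth tightening if you write this up in full. First, when you say ``the time-dependent term is a Fourier multiplier whose $H^b$ norm is $\norma{\chi(|\tau|T\geq 1)\langle\tau\rangle^{b-1}\hat g}_{L^2_\tau}$'', you are silently using that multiplication by $\psi_T$ is bounded on $H^b_t$ uniformly in $T\le 1$; this is true here (one uses $\langle\tau\rangle^b\lesssim\langle\tau-\sigma\rangle^b+\langle\sigma\rangle^b$, Young's inequality, and the high-frequency support of $\hat h$ to see that the potentially bad factor $T^{-b}$ from $\|\langle\cdot\rangle^b\hat\psi_T\|_{L^1}$ is compensated), but it deserves a line. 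Second, in the low-frequency Taylor step your bookkeeping phrase ``using $\langle\tau\rangle^{-1-b'}\in L^2(|\tau|T<1)$ with weight $T^{1/2+b'}$ since $b\le b'+1$'' is slightly garbled: the relevant computation is $\|\tau^{k-1}\langle\tau\rangle^{-b'}\|_{L^2(|\tau|<1/T)}\lesssim T^{1-k}T^{b'-1/2}$, which combines with $\|\psi_T t^k\|_{H^b}\lesssim T^{k+1/2-b}$ to give $T^{1-b+b'}$, and the condition $b\le b'+1$ is what makes the high-frequency multiplier bound $\langle\tau\rangle^{b-1-b'}\le T^{1-b+b'}$ work, not the low-frequency Cauchy--Schwarz. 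These are cosmetic; the structure of your argument is the right one and matches the source.
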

	For  $b>1/2$, it is clear that $X^{s,b} \subset \mathscr{C}(\R, H^s)$. This is no longer true if $b \leq 1/2$ and we shall need the following Lemma for that result.
	\begin{lemma} (cf. \cite{MR1491547} Lemma 2.2).
		\label{C(R,H^s)}
		Let $ f \in Y^s$, then $\int_0^t dt' \, U(t-t') f(t') \in \mathscr{C}(\R, H^s)$.
	\end{lemma}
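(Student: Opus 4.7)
The plan is to give an explicit Fourier representation of $u(t)=\int_0^t U(t-t')f(t')\,dt'$ and then deduce both membership in $H^s$ and continuity in $t$ from dominated convergence. Inverting the temporal Fourier transform of $f$ and integrating the resulting exponential in $t'\in[0,t]$ gives
\[
\widehat{u(t)}(\xi)=\frac{1}{2\pi}\int_{\R}\widehat{f}(\xi,\tau)\,\frac{e^{it\tau}-e^{-it\phi(\xi)}}{i(\tau+\phi(\xi))}\,d\tau,
\]
where the hat on the left denotes the spatial Fourier transform and on the right the full spacetime one. This formula is what makes every subsequent estimate transparent.

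The pointwise kernel estimate
\[
\left|\frac{e^{it\tau}-e^{-it\phi(\xi)}}{i(\tau+\phi(\xi))}\right|=\left|\frac{e^{it(\tau+\phi(\xi))}-1}{\tau+\phi(\xi)}\right|\le \min\!\left(|t|,\frac{2}{|\tau+\phi(\xi)|}\right)\lesssim \frac{1+|t|}{\bra{\tau+\phi(\xi)}},
\]
combined with Minkowski's inequality applied to the $L^2_\xi L^1_\tau$ structure, yields $\norma{u(t)}_{H^s}\lesssim (1+|t|)\norma{f}_{Y^s}$, so that $u(t)\in H^s$ for every $t$, with a bound that is uniform on every compact time interval.

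For continuity, I fix $t_0\in\R$ and write, for small $h$,
\[
\widehat{u(t_0+h)}(\xi)-\widehat{u(t_0)}(\xi)=\frac{1}{2\pi}\int_{\R}\widehat{f}(\xi,\tau)\,K_h(\xi,\tau)\,d\tau,
\]
where $K_h$ is the kernel built from the difference of the numerators at $t_0+h$ and $t_0$. Two observations drive the argument: (i) $K_h\to0$ pointwise in $(\xi,\tau)$ as $h\to0$ by continuity of the exponentials, and (ii) applying the previous bound to each of the two summands in $K_h$ gives the majorant $|K_h(\xi,\tau)|\lesssim (1+|t_0|+|h|)/\bra{\tau+\phi(\xi)}$, uniform for $|h|\le1$. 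Dominated convergence in $\tau$ yields $\widehat{u(t_0+h)}(\xi)\to\widehat{u(t_0)}(\xi)$ pointwise in $\xi$, and the same majorant gives an $L^2_\xi$ envelope of norm $\lesssim\norma{f}_{Y^s}$, so a second application of dominated convergence in $\xi$ gives $\norma{u(t_0+h)-u(t_0)}_{H^s}\to 0$.

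The main obstacle is bookkeeping rather than idea: chaining the two dominated-convergence steps while preserving the $L^2_\xi L^1_\tau$ structure of $Y^s$, and keeping careful track of where the factor $1+|t|$ enters. A cleaner alternative I would keep in reserve is a density argument: approximate $f$ in $Y^s$ by Schwartz functions $f_n$, for which $u_n\in \mathscr{C}(\R,H^s)$ is immediate; then the uniform estimate $\sup_{|t|\le T}\norma{u_n(t)-u(t)}_{H^s}\lesssim (1+T)\norma{f_n-f}_{Y^s}$ transfers continuity to $u$ on every bounded interval, hence on all of $\R$.
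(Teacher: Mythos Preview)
Your argument is correct. The paper does not supply its own proof of this lemma but merely cites Lemma~2.2 of Ginibre--Tsutsumi--Velo~\cite{MR1491547}; the explicit Fourier representation of the Duhamel integral, the kernel bound $\min(|t|,2/|\tau+\phi(\xi)|)\lesssim(1+|t|)\bra{\tau+\phi(\xi)}^{-1}$, and the two-step dominated convergence (or the density alternative) you describe constitute precisely the standard argument behind that reference, so there is nothing further to compare.
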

	\textit{Remark}:
	\begin{itemize}
		\item[1.] In the case: $k<l+1$ we shall be able to take \eqref{linear estimate 2} with $b,b_1 < 1/2$ and suitable $ 0 < c,c_1 <1/2$ to estimate the following terms
		\begin{itemize}
			\item[i)] $ n_{\pm} u, \, n_{\pm} Q $,  $Q^2u$ and $u$ in $X_1^{k, -c_1}$ ,
			\item[ii)] $ \omega |u|^2$ and $ \omega( Q Re( u)) $ in $X_2^{l,-c}$.
		\end{itemize}
		Furthermore, in order to get solution in $\mathscr{C} ([0,T], H^k \times H^l \times H^{l-1})$, we also need to estimate the terms on $Y_1^{k}$ and $Y_2^l$ respectively. ($Y_j$ corresponds to the symbol $\phi_j$, $j=1,2$)
		\item[2.] In the limit case:  $k=l+1$ we shall be forced to take $b_1 = 1/2$. The estimates for $Q^2 u$ , $Q n_{\pm}$ , $n_{\pm} u$ and $u$ in $Y_1^{k}$ is also needed for the proof of the local well-posedness.
		\item[3.]  We are allowed to assume that $u$ and $n_{\pm}$ have compact support in $t$ by using additional cutoffs inside $f$ in \eqref{cutoff equation} and consider the equation.
		\begin{equation}
		u(t) = \psi_1(t) \, U(t) \, u_0 - \psi_T(t) \, \int_0^t dt' \, U(t-t') \, f(\psi_{2T}(t') \, u(t')).
		\end{equation}
		For the effect of those factors in the spaces $X^{s,b}$, we refer \cite{MR1491547} Lemma 2.5.
	\end{itemize}
	
	The rest of this Section will be organized as follows: In section \ref{estimates for linear terms}, we will prove the estimates for linear terms which are new in this context. The estimates for nonlinear terms were proved completely in \cite{MR1491547} and we will recall them in  section \ref{estimates for nonlinear terms}. Finally,  we give  the final step of the proof of Theorem \ref{Main result 1}.
	
	

	
	\subsection{Estimates for linear terms}
	\label{estimates for linear terms}
	We may assume that $n_{\pm} \in X_2^{l,b}$ and $u \in X_1^{k,b_1}$  then they can be rewritten in the form 
	\begin{gather*}
	\widehat{n_\pm}  = \bra{\xi}^{-l} \, \bra{\tau \pm |\xi|}^{-b} \,\widehat{v} , \\
	\widehat{u}  = \bra{\xi}^{-k} \, \bra{\tau + |\xi|^2}^{-b_1} \,\widehat{w},\\
	\widehat{\bar{u}} = \bra{\xi}^{-k} \, \bra{\tau - |\xi|^2}^{-b_1} \, \widehat{\bar{w}},
	\end{gather*}  
	where $v, \, w \in L^2(\R^2)$. Then
	\[
	\begin{split}
	\widehat{n_{\pm} Q} (\xi, \tau) & = \mathcal{F}_x \left(  Q(x) \, \mathcal{F}_{yt} \left( n_{\pm}  \right)   \right) (\xi_1) \\
	& = \int \mathcal{F}_x\{Q \} (\xi_1') \; \widehat{n_{\pm}}(\xi_1 - \xi_1', \xi_2, \tau) \, d \xi_1' \\
	&  = \int \mathcal{F}_x\{Q \} (\xi_1') \; \widehat{v}(\xi_1 - \xi_1', \xi_2, \tau) \, \bra{(\xi_1 -\xi_1', \xi_2)}^{-l} \\
	& \quad \qquad \times \bra{\tau \pm |(\xi_1-\xi_1', \xi_2)|}^{-b} \, d \xi_1',
	\end{split} 
	\]
	\[
	\begin{split}
	\widehat{Q u} (\xi, \tau) & =  \int \mathcal{F}_x\{Q \} (\xi_1') \; \widehat{w}(\xi_1 - \xi_1', \xi_2, \tau) \, \bra{(\xi_1 -\xi_1', \xi_2)}^{-k} \\
	& \quad \qquad \times \bra{\tau + |(\xi_1 - \xi_1', \xi_2)|^2}^{-b_1} \, d\xi_1',
	\end{split}
	\]
	and 
	\[
	\begin{split}
	\widehat{|Q|^2u}(\xi , \tau) & = \int \mathcal{F}_x\{|Q|^2 \} (\xi_1') \; \widehat{w}(\xi_1 - \xi_1', \xi_2, \tau) \, \bra{(\xi_1 -\xi_1', \xi_2)}^{-k} \\
	& \quad \qquad \times \bra{\tau + |(\xi_1 - \xi_1', \xi_2)|^2}^{-b_1} \, d\xi_1'.
	\end{split} 
	\]
	Furthermore, in order to estimate $\omega ( Q Re(u) )$ we will rewrite it as \\ $\omega \left( Q \, \frac{u + \bar{u}}{2} \right)$, then we also need the following form
	\[
	\begin{split}
	\widehat{Q \bar{u}} (\xi, \tau) &  = \int \mathcal{F}_x \{ Q \} (\xi_1') \,      \widehat{\bar{w}} (\xi_1 -\xi_1',\xi_2, \tau) \, \bra{(\xi_1 -\xi_1', \xi_2)}^{-k} \\
	& \quad \qquad \times \bra{\tau - |(\xi_1 - \xi_1', \xi_2  )|^2}^{-b_1} \, d \xi_1' .
	\end{split} 
	\]
	It is important to note that in our arguments that $\mathcal{F}_x\{ Q \}$ and $\mathcal{F}_x \{ Q^2 \}$ are positive functions.
	
	In order to estimate $n_\pm Q$ in $X_1^{k, -c_1}$, we take its scalar product with a generic function in $X_1^{-k,c_1}$ with Fourier transform $\bra{\xi}^{k} \, \bra{\tau + |\xi|^2}^{-c_1} \widehat{v_1}$ and $v_1 \in L^2$. The required estimate of $n_\pm Q$ in $X_1^{k,-c_1}$ then takes the form
	\begin{equation}
	\label{I_1} |I_1| \leq C T^{\theta_1}  \norma{v}_2 \, \norma{v_1}_2,
	\end{equation}
	where
	\[ 
	I_1=   \int \frac{\mathcal{F}_x \{Q \} (\xi_1') \; \widehat{v}(\xi_1 -\xi_1', \xi_2, \tau) \; \widehat{v_1} (\xi, \tau) \, \bra{\xi}^k   }{\bra{\xi_1 - \xi_1' , \xi_2}^l \, \bra{\tau \pm | (\xi_1 - \xi_1', \xi_2 ) |}^b \, \bra{\tau + |\xi|^2}^{c_1}}   \, d \xi \, d \tau \, d \xi_1',
	\] 
	with the notation $\xi = (\xi_1, \xi_2)$.
	
	In order to estimate $Q^2 u$ in $X_1^{k,-c_1}$, the required estimate takes the form
	\begin{equation}
	\label{I_2}
	|I_2| \leq C T^{\theta_2} \norma{w}_2 \norma{v_2}_2,
	\end{equation}
	where 
	\[
	I_2 = \int \frac{\mathcal{F}_x \{Q^2 \} (\xi_1') \; \widehat{w}(\xi_1 -\xi_1', \xi_2, \tau) \; \widehat{v_2} (\xi, \tau)  \, \bra{\xi}^k   }{\bra{\xi_1 - \xi_1' , \xi_2}^k \, \bra{\tau + | (\xi_1 - \xi_1', \xi_2 ) |^2}^{b_1} \, \bra{\tau + |\xi|^2}^{c_1}}   \, d \xi \, d \tau \, d \xi_1',
	\]
	with $v_2 \in L^2$. The required estimate for $u$ in $X_1^{k, -c_1}$ is simpler than \eqref{I_2} since we do not have the term $Q$ and the variable $\xi_1'$.
	
	In order to estimate $\omega( Q Re( u))$ in $X_2^{l,-c}$, we will estimate $\omega ( Q u)$ and $\omega ( Q \bar{u})$ in $X_2^{l,-c}$. We take its scalar product with a generic function in $X_2^{-l,c}$ with Fourier transform $ \bra{\xi}^l \, \bra{\tau \pm |\xi|}^{-c} \widehat{v_2}$ and $v_2 \in L^2$. The required estimates for $\omega Re (Qu)$ then takes the form
	\begin{gather}
	\label{I_3} |I_3| \leq C T^{\theta_3}  \norma{w}_2 \norma{v_2}_2 , \\
	\label{I_4} |I_4| \leq C T^{\theta_4} \norma{w}_2 \norma{v_2}_2 ,
	\end{gather}
	where 
	\[      I_3= \int \frac{\mathcal{F}_x \{Q \} (\xi_1') \; \widehat{w}(\xi_1 -\xi_1', \xi_2, \tau) \; \widehat{v_2} (\xi, \tau) \, |\xi| \, \bra{\xi}^l   }{\bra{\xi_1 - \xi_1' , \xi_2}^k \, \bra{\tau + | (\xi_1 - \xi_1', \xi_2 ) |^2}^{b_1} \, \bra{\tau \pm |\xi|}^{c}}   \, d \xi \, d \tau \, d \xi_1' ,     \]
	
	\[
	I_4 = \int \frac{\mathcal{F}_x \{Q \} (\xi_1') \; \widehat{\bar{w}}(\xi_1 -\xi_1', \xi_2, \tau) \; \widehat{v_2} (\xi, \tau) \, |\xi| \, \bra{\xi}^l   }{\bra{\xi_1 - \xi_1' , \xi_2}^k \, \bra{\tau - | (\xi_1 - \xi_1', \xi_2 ) |^2}^{b_1} \, \bra{\tau \pm |\xi|}^{c}}   \, d \xi \, d \tau \, d \xi_1' .
	\]
	
	We now consider the estimates for the linear terms in $Y_1^k$ and $Y_2^l$. The estimates for $Q^2 u$ and $u$ are similar, then we will only estimate $Q^2 u$ in $Y_1^k$. Similarly to the previous part, the required estimates for $Q^2 u$ in $Y_1^k$  will take the form :
	\begin{equation}
	\label{I_5} |I_5| \leq C T^{\theta_5} \norma{w}_2 \norma{v_3}_{L^2_x},
	\end{equation}
	where
	\[
	I_5 = \int \frac{ \mathcal{F}_x \{ Q^2 \} (\xi_1') \, \widehat{w} (\xi_1 - \xi_1', \xi_2, \tau) \, \widehat{v_3} (\xi)  \, \bra{\xi}^k    }{\bra{\xi_1 -\xi_1', \xi_2 }^k \, \bra{\tau + |(\xi_1 - \xi_1', \xi_2)|^2}^{b_1} \, \bra{\tau + |\xi|^2}  } \, d \xi \, d \tau \, d \xi_1'.
	\]
	
	Estimate for $Q n_{\pm}$ in $Y_1^k$:
	\begin{equation}
	\label{I_6} |I_6| \leq C T^{\theta_6} \norma{v}_2 \norma{v_3}_{L^2_x},
	\end{equation}
	where
	\[
	I_6 =  \int \frac{  \mathcal{F}_x \{ Q \} (\xi_1') \, \widehat{v} (\xi_1 - \xi_1', \xi_2, \tau) \, \widehat{v_3}(\xi)  \, \bra{\xi}^k  }{\bra{\xi_1 - \xi_1', \xi_2  }^l  \, \bra{\tau \pm |(\xi_1 - \xi_1', \xi_2)|}^b \, \bra{\tau + |\xi|^2}  }  \, d \xi \, d \tau \, d \xi_1'.
	\] 
	
	Estimate for $\omega(Q u)$ in $Y_2^l$:
	\begin{equation}
	\label{I_7}
	|I_7| \leq C T^{\theta_7} \norma{w}_2 \norma{v_3}_{L^2_x},
	\end{equation}
	where 
	\[
	I_7 = \int \frac{ \mathcal{F}_x \{ Q \} (\xi_1') \, \widehat{w} (\xi_1 - \xi_1', \xi_2, \tau) \, \widehat{v_3} (\xi)  \, \bra{\xi}^l \, |\xi|    }{\bra{\xi_1 -\xi_1', \xi_2 }^k \, \bra{\tau + |(\xi_1 - \xi_1', \xi_2)|^2}^{b_1} \, \bra{\tau \pm |\xi|}  } \, d \xi \, d \tau \, d \xi_1'.
	\]
	
	Estimate for $\omega(Q \bar{u})$ in $Y_2^l$:
	\begin{equation}
	\label{I_8}
	|I_8| \leq C T^{\theta_8} \norma{w}_2 \norma{v_3}_{L^2_x},
	\end{equation}
	where 
	\[
	I_8 = \int \frac{ \mathcal{F}_x \{ Q \} (\xi_1') \, \widehat{\bar{w}} (\xi_1 - \xi_1', \xi_2, \tau) \, \widehat{v_3} (\xi)  \, \bra{\xi}^l \, |\xi|    }{\bra{\xi_1 -\xi_1', \xi_2 }^k \, \bra{\tau - |(\xi_1 - \xi_1', \xi_2)|^2}^{b_1} \, \bra{\tau \pm |\xi|}  } \, d \xi \, d \tau \, d \xi_1'.
	\]
	Because of the effect of cutoff function, in \eqref{I_1}-\eqref{I_8} we are allowed to assume that the following functions
	\[ \mathcal{F}^{-1} ( \bra{\tau \pm |(\xi_1 -\xi_1', \xi_2)|}^{-b} \widehat{v} ),\, \mathcal{F}^{-1}( \bra{\tau + |(\xi_1 - \xi_1', \xi_2)|^2}^{-b_1} \widehat{w} )\]
	will be supported in a region $|t| < C T$.\\
	Preparing for the proofs of \eqref{I_1}-\eqref{I_6}, we first recall the Strichartz estimate and some elementary inequalities which we will need.
	\begin{lemma} (cf. \cite{MR1491547} Lemma 3.1) Let $b_0 > 1/2$, let $a \geq 0$, $a' \geq 0$, let $0 \leq \gamma \leq 1$. Assume in addition that $(1 - \gamma) a \leq b_0$ and $\gamma a \leq a '$. Let $0 < \eta \leq 1$ and define $q$ and $r$ by
		\begin{gather}
		\label{strichartz 1}   2/q = 1 - \eta(1 - \gamma) a /b_0 \\
		\label{strichartz 2} 1 - 2/r = (1- \eta) (1 -\gamma) a /b_0.
		\end{gather}
		Let $v \in L^2$ be such that $\mathcal{F}^{-1} (\bra{\tau + |\xi|^2}^{-a'} \widehat{v})$ has support in $|t| \leq CT$. Then
		\begin{equation}
		\label{strichartz 3} \norm{\mathcal{F}^{-1} (\bra{\tau + |\xi|^2 }^{-a} |\widehat{v}|)}{L^q L^r} \leq C T^\theta \norma{v}_2,
		\end{equation}
		with $\theta \geq 0$. Note that $\theta = 0$ if and only if $a=0$ or $\gamma =0$.
	\end{lemma}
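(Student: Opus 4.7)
The plan is to prove this Strichartz-type estimate in three steps: first establish a base embedding $X^{0,b_0}\hookrightarrow L^{q_1}_t L^{r_1}_x$ via the transfer principle, then complex-interpolate with the trivial Plancherel identity to produce the announced $(q,r)$-family, and finally exploit the time-support hypothesis through the standard $X^{s,b}$-time-localization lemma to extract the $T^\theta$ factor.

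For the anchor, I recall that for $b_0>1/2$ and any 2D Schr\"odinger-admissible pair $(q_1,r_1)$ with $2/q_1+2/r_1=1$ and $q_1>2$, the transfer principle gives
\[
\|\mathcal{F}^{-1}(\bra{\tau+|\xi|^2}^{-b_0}F)\|_{L^{q_1}_t L^{r_1}_x} \leq C\|F\|_{L^2_{\xi,\tau}}.
\]
(Write the inverse Fourier transform as a $\sigma$-superposition, $\sigma=\tau+|\xi|^2$, of modulated Schr\"odinger evolutions, apply Minkowski and fiberwise Strichartz, then Cauchy--Schwarz in $\sigma$---permitted because $\bra{\sigma}^{-b_0}\in L^2_\sigma$ exactly when $b_0>1/2$.) Complex interpolation with $\|\mathcal{F}^{-1}F\|_{L^2_tL^2_x}=\|F\|_{L^2}$, using interpolation weight $\mu=(1-\gamma)a/b_0\in[0,1]$ (admissible by $(1-\gamma)a\leq b_0$) and parameterizing the Strichartz pair through $2/q_1=\eta$, $2/r_1=1-\eta$ with $\eta\in(0,1)$, then yields
\[
\|\mathcal{F}^{-1}(\bra{\tau+|\xi|^2}^{-(1-\gamma)a}F)\|_{L^qL^r} \leq C\|F\|_{L^2},
\]
with the exponents $2/q=1-\eta(1-\gamma)a/b_0$ and $1-2/r=(1-\eta)(1-\gamma)a/b_0$ of the statement. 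Substituting $F=\bra{\tau+|\xi|^2}^{-\gamma a}|\widehat v|$ reduces matters to bounding $\|\bra{\tau+|\xi|^2}^{-\gamma a}|\widehat v|\|_{L^2}$ by $CT^\theta\|v\|_{L^2}$.

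Here the support hypothesis enters. Writing $w:=\mathcal{F}^{-1}(\bra{\tau+|\xi|^2}^{-a'}\widehat v)$, so that $|\widehat v|=\bra{\tau+|\xi|^2}^{a'}|\widehat w|$ and $w$ is supported in $|t|\leq CT$, the residual bound becomes
\[
\|w\|_{X^{0,\,a'-\gamma a}} \leq CT^\theta \,\|w\|_{X^{0,\,a'}} = CT^\theta\|v\|_{L^2},
\]
and this is precisely the content of the $X^{s,b}$-time-localization lemma (a standard companion to Bourgain's estimates used in \cite{MR1491547}): for $w$ supported in $|t|\leq CT$ and $-1/2<b_1\leq b_2<1/2$, $\|w\|_{X^{s,b_1}}\leq CT^{b_2-b_1}\|w\|_{X^{s,b_2}}$. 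Taking $b_1=a'-\gamma a$, $b_2=a'$ (both nonnegative by $\gamma a\leq a'$) produces the required $\theta=\gamma a\geq 0$, vanishing exactly when $a=0$ or $\gamma=0$, as claimed.

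The main obstacle I anticipate is ensuring that the time-localization lemma applies throughout the parameter range $0\leq a'-\gamma a\leq a'$: when $a'$ exceeds $1/2$ the direct application fails, and one must either iterate the localization lemma or factor out an integer number of modulation-Sobolev derivatives before applying it. The Strichartz transfer and the complex interpolation are routine; the delicate point is this bookkeeping, together with verifying that the composite estimate precisely reproduces the exponents $(q,r)$ stated in the lemma through admissible choices of $\eta$ and $\mu$.
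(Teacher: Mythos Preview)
The paper does not prove this lemma at all; it is quoted from Ginibre--Tsutsumi--Velo \cite{MR1491547}, Lemma~3.1, and used as a black box. Your three-step scheme---transfer the free Strichartz estimate to $X^{0,b_0}$, complex-interpolate against Plancherel with weight $\mu=(1-\gamma)a/b_0$, then invoke time localization to extract $T^\theta$---is exactly the standard argument and is the one GTV carry out.

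Two small corrections are worth noting. First, your parametrization of the anchor admissible pair is transposed: interpolating with $2/q_1=\eta$ gives $2/q=1-(1-\eta)\mu$, not $1-\eta\mu$. To hit the stated exponents you need $2/q_1=1-\eta$, $2/r_1=\eta$; this also explains why $\eta>0$ is required (it keeps $q_1>2$, avoiding the forbidden 2D endpoint), whereas with your labeling $\eta=1$ would be the problematic end. The slip is harmless---just relabel $\eta\leftrightarrow 1-\eta$. Second, the lemma claims only \emph{some} $\theta>0$ when $a>0$ and $\gamma>0$, not the explicit value $\theta=\gamma a$. So in the regime $a'\geq 1/2$ that you flag, you do not need the full power: capping the upper index in the time-localization step just below $1/2$ already yields a strictly positive $\theta$ whenever $a'-\gamma a<1/2$, and the remaining case can be handled fiberwise in $\xi$ by the scale-invariant one-dimensional inequality for compactly supported functions. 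In the applications made later in the paper only the case $\gamma=1$, $q=r=2$ is used, where none of these subtleties arise.
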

	\textit{Remark}: In particular, in this paper, in order to estimate the linear terms we shall use only the special case $q=r=2$, $\gamma =1$ for both Schr\"odinger equation and wave equation.
	\begin{lemma}
		\label{estimate of the symbols.}
		Let $\xi_1, \xi_1', \xi_2, \tau \in \R$ and $\xi =(\xi_1,\xi_2)$, then there exists $ C>0$ such that these following estimates hold,
		\begin{gather}
		\label{symbol1} \bra{\xi}^2 \leq C \left(   \bra{\tau \pm | (\xi_1 - \xi_1', \xi_2 ) |} + \bra{\tau + |\xi |^2} +  \bra{\xi_1'}  \right),\\
		\label{symbol2}
		\bra{\xi}^2 \leq C \left( \bra{\tau \pm |\xi|} + \bra{\tau \pm |(\xi_1 - \xi_1',\xi_2)|^2} + \bra{\xi_1'}^2  \right), \\
		\label{symbol3} \bra{\xi} \leq C \bra{\xi_1'} \, \bra{(\xi_1 - \xi_1', \xi_2)} 
		\end{gather}
	\end{lemma}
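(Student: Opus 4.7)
All three assertions are elementary bracket manipulations based on the triangle inequality and AM--GM, and I do not anticipate a genuine conceptual obstacle; my plan is to dispose of \eqref{symbol3} by a direct splitting argument and then to bootstrap the comparison that comes out of it in order to prove \eqref{symbol1} and \eqref{symbol2}. The only non-trivial detail throughout is a classical absorption step that keeps the argument from circling back on itself.

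For \eqref{symbol3}, I would write $\xi_1 = (\xi_1 - \xi_1') + \xi_1'$ and apply $(a+b)^2 \le 2a^2 + 2b^2$ to obtain the comparison
\[
|\xi|^2 \le 2|(\xi_1 - \xi_1', \xi_2)|^2 + 2(\xi_1')^2.
\]
Combining this with $\bra{\xi_1'}^2 \bra{(\xi_1 - \xi_1', \xi_2)}^2 \ge 1 + (\xi_1')^2 + |(\xi_1 - \xi_1', \xi_2)|^2$ yields $\bra{\xi}^2 \le 2\bra{\xi_1'}^2 \bra{(\xi_1 - \xi_1', \xi_2)}^2$, which is \eqref{symbol3} with $C=\sqrt{2}$.

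For \eqref{symbol1}, I set $A = |(\xi_1 - \xi_1', \xi_2)|$ and peel $|\xi|^2$ off in two triangle steps, $|\xi|^2 \le \bra{\tau + |\xi|^2} + |\tau|$ and $|\tau| \le \bra{\tau \pm A} + A$, before controlling the residual $A$ by the same splitting used for \eqref{symbol3} in the form $A \le \sqrt{2}\,|\xi| + \sqrt{2}\,|\xi_1'|$. The delicate point is that a $\sqrt{2}\,|\xi|$ has now reappeared on the right; I would absorb it back into the left-hand side using AM--GM, $\sqrt{2}\,|\xi| \le \tfrac12 |\xi|^2 + 1$, and then bound $|\xi_1'| \le \bra{\xi_1'}$. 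Estimate \eqref{symbol2} runs along the same lines with $B := |(\xi_1 - \xi_1', \xi_2)|^2$ in place of $A$: I would use $|\xi|^2 \le 2B + 2(\xi_1')^2$, peel $B$ off by $B \le \bra{\tau \pm B} + |\tau|$ and $|\tau| \le \bra{\tau \pm |\xi|} + |\xi|$, and then invoke $|\xi| \le \sqrt{2B} + \sqrt{2}\,|\xi_1'|$. The extra gymnastics compared with \eqref{symbol1} is a root absorption $\sqrt{2B} \le B/2 + 1$ that moves the $B$ on the right back into the $B$ on the left, after which $\bra{\xi_1'}^2$ accommodates both the linear and quadratic residues in $\xi_1'$.

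The main hazard in the whole argument is really just bookkeeping: at each step I must verify that the cross terms $\sqrt{2}\,|\xi|$ in \eqref{symbol1} and $\sqrt{2B}$ in \eqref{symbol2} can be genuinely absorbed into the left-hand side without creating a circular inequality, which is exactly why the specific AM--GM constants matter. Once this is in place, each of the three bounds reduces to a one-line consequence of the elementary inequalities above.
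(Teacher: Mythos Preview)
Your argument is correct. The three inequalities are indeed elementary, and the absorption steps you flag (the AM--GM bounds $\sqrt{2}\,|\xi|\le \tfrac12|\xi|^2+1$ and $\sqrt{2B}\le \tfrac12 B+1$) do exactly what is needed to close the chain without circularity.

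The paper argues \eqref{symbol3} just as you do. For \eqref{symbol1}, however, it takes a slightly different path: instead of peeling off terms one bracket at a time, it bounds the \emph{sum of squares} of the three brackets from below by using Cauchy--Schwarz in the form $a^2+b^2+c^2\ge (a-b-c)^2/3$, which produces the quantity $|\xi|^2+|\xi_1'|\mp|(\xi_1-\xi_1',\xi_2)|$; it then splits into the cases $|\xi_1|+|\xi_2|\ge 4$ and $|\xi_1|+|\xi_2|<4$ to show this quantity dominates $|\xi|^2$. For \eqref{symbol2} the paper simply says ``similar''. Your triangle-inequality-plus-absorption approach is cleaner in that it avoids the case distinction and treats \eqref{symbol1} and \eqref{symbol2} uniformly; the paper's approach has the minor advantage of making the algebraic identity $(\tau+|\xi|^2)-(\tau\pm A)\mp|\xi_1'| = |\xi|^2\mp A\mp|\xi_1'|$ visible in one shot. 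Either way the content is the same elementary observation.
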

	\begin{proof}
		Proof of \eqref{symbol1}. By Cauchy-Schwarz inequality
		\[
		\begin{split}
		& \bra{\tau \pm | (\xi_1 - \xi_1', \xi_2 ) |}^2 + \bra{\tau + |\xi|^2}^2 +  \bra{\xi_1'}^2 \\
		= & 3 + \left( \tau \pm | (\xi_1 - \xi_1', \xi_2 ) |  \right)^2 + \left( \tau + |\xi |^2  \right)^2 +  \xi_1'^2 \\
		\geq & 3 + \left(  |\xi|^2 +  |\xi_1'|  \mp |(\xi_1 - \xi_1', \xi_2)|  \right)^2 /3,
		\end{split}
		\]
		and 
		\[
		\begin{split}
		|(\xi_1 - \xi_1', \xi_2)| = & \sqrt{(\xi_1 - \xi_1')^2 + \xi_2^2} \\
		\leq &  \left( | \xi_1 - \xi_1' | + |\xi_2| \right) \\
		\leq & \left( |\xi_1| + | \xi_1'| + | \xi_2|  \right).
		\end{split}
		\]
		If  $|\xi_1| + | \xi_2| \geq 4$ then
		
		\[
		\begin{split}
		|\xi|^2 + |\xi_1'|  \geq & \frac{\left( |\xi_1| + |\xi_2| \right)^2}{2} +  |\xi_1'| \\
		\geq & \frac{( |\xi_1| + |\xi_2| )^2}{4} +  \left(|\xi_1| + |\xi_2| + |\xi_1'| \right).
		\end{split}
		\]
		Therefore,
		\[
		\begin{split}
		&  |\xi|^2 +  |\xi_1'|  \mp |(\xi_1 - \xi_1', \xi_2)|  \geq \frac{( |\xi_1| + |\xi_2| )^2}{4},
		\end{split}
		\]
		and 
		\[
		\begin{split}
		\bra{\tau \pm | (\xi_1 - \xi_1', \xi_2 ) |}^2 + \bra{\tau + |\xi|^2}^2 +  \bra{\xi_1'}^2 \geq & 3 + \left( |\xi_1| + |\xi_2|  \right)^4/48 \\
		\geq & 3 + \left( \xi_1^2 + \xi_2^2  \right)^2 /48 \\
		> & \bra{\xi}^4/96.
		\end{split} 
		\]
		If $|\xi_1|+ |\xi_2| < 4$ or $|\xi| < 4$ then there exists a constant C large enough such that
		\[
		\bra{\tau \pm | (\xi_1 - \xi_1', \xi_2 ) |}^2 + \bra{\tau + |\xi|^2}^2 +  \bra{\xi_1'}^2 \geq 3 > \bra{\xi}^4 /C.
		\]
		Therefore, there exists $C>0$ such that \eqref{symbol1} holds. The proof of \eqref{symbol2} is similar.
		
		Proof of \eqref{symbol3},
		\[
		\begin{split}
		\bra{\xi_1'}^2 \, \bra{\xi_1 - \xi_1', \xi_2}^2 = & \left( 1 + \xi_1'^2   \right) \left( 1 + (\xi_1 - \xi_1')^2 + \xi_2^2   \right) \\
		\geq & 1 + \xi_1'^2 + (\xi_1 - \xi_1')^2 + \xi_2^2 \\
		\geq & 1 + \xi_1^2 / 2 + \xi_2^2 \\
		> & \bra{\xi}^2/2,
		\end{split}
		\]
		then \eqref{symbol3} holds.
	\end{proof}
	\begin{lemma}
		\label{estimate of the symbols in limit case}
		(i) (cf. \cite{MR1491547} Lemma 3.3 (i)) Let $y_1, y_2 \in \R$ and $z = y_1 -y_2$. Then for any $\nu >1$
		\begin{equation}
		\label{symbol 4}
		|z| \leq \nu |y_2| + \frac{\nu}{\nu -1} |y_1| \chi ( |z| \geq \nu |y_2|) \, \chi \left (  \frac{\nu}{\nu +1} \leq \frac{|z|}{|y_1|} \leq \frac{\nu}{\nu -1}   \right ),
		\end{equation}
		where $\chi(\Omega)$ is the characteristic function of the set $\Omega$.\\
		(ii) Let $\xi_1, \, \xi_2, \, \xi_1' \in \R $ , $\xi = (\xi_1, \xi_2)$ and let $|\xi| \geq 2 |\xi_1'|$. Then
		\begin{equation}
		\label{symbol 5}
		\bra{\xi}^2 \leq C \left(   \bra{\xi_1'} + 2 \bra{\tau \pm |(\xi_1 -\xi_1', \xi_2 )|} + 2 \bra{ \tau + |\xi|^2} \chi (\mathcal{B})     \right) ,
		\end{equation}
		where 
		\[
		\mathcal{B} = \left( (\tau, \xi):  \frac{1}{2} (|\xi|^2 - \frac{3}{2} |\xi|) \leq |\tau + |\xi|^2| \leq \frac{3}{2} ( |\xi|^2 + \frac{3}{2} |\xi|  )    \right).
		\]
	\end{lemma}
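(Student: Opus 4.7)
The plan is to derive part (ii) from part (i), which is cited from \cite{MR1491547}. I would apply part (i) with $y_1 = \tau + |\xi|^2$ and $y_2 = \tau \pm |(\xi_1 - \xi_1', \xi_2)|$, so that $z = y_1 - y_2 = |\xi|^2 \mp |(\xi_1 - \xi_1', \xi_2)|$, and take $\nu = 2$; indeed the constants $\frac{\nu}{\nu+1} = \frac{2}{3}$ and $\frac{\nu}{\nu-1} = 2$ are what drive the specific numerical values $\frac{1}{2}$ and $\frac{3}{2}$ in the definition of $\mathcal{B}$. Part (i) then gives
\[
|z| \leq 2|y_2| + 2|y_1|\,\chi\!\left(\frac{2}{3} \leq \frac{|z|}{|y_1|} \leq 2\right).
\]

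Next, the hypothesis $|\xi| \geq 2|\xi_1'|$ together with the reverse triangle inequality confines $|(\xi_1-\xi_1',\xi_2)|$ to the interval $[|\xi|/2,\, 3|\xi|/2]$, so that, regardless of the $\pm$ sign,
\[
|\xi|^2 - \tfrac{3}{2}|\xi| \leq |z| \leq |\xi|^2 + \tfrac{3}{2}|\xi|.
\]
Combining $|\xi|^2 \leq |z| + |\xi| + |\xi_1'|$ with the elementary estimate $|\xi| \leq \frac{1}{2}\bra{\xi}^2$ and absorbing the trivial small-$|\xi|$ regime by enlarging $C$, I would conclude $\bra{\xi}^2 \leq C\bigl(|z| + \bra{\xi_1'}\bigr)$.

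It remains to identify the cut-off set from part (i) with $\mathcal{B}$. The condition $\frac{2}{3} \leq |z|/|y_1| \leq 2$ is equivalent to $|z|/2 \leq |y_1| \leq 3|z|/2$, and substituting the two-sided bound on $|z|$ above yields
\[
\tfrac{1}{2}\bigl(|\xi|^2 - \tfrac{3}{2}|\xi|\bigr) \leq |y_1| \leq \tfrac{3}{2}\bigl(|\xi|^2 + \tfrac{3}{2}|\xi|\bigr),
\]
which is exactly the set $\mathcal{B}$. Assembling $\bra{\xi}^2 \leq C(|z| + \bra{\xi_1'})$ with $|z| \leq 2|y_2| + 2|y_1|\chi(\mathcal{B})$ and $|y_j| \leq \bra{y_j}$ yields \eqref{symbol 5}.

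The only step that really requires thought is the reverse-engineering of the constants defining $\mathcal{B}$: they are widened on purpose (upper bound $\frac{3}{2}(|\xi|^2 + \frac{3}{2}|\xi|)$, lower bound $\frac{1}{2}(|\xi|^2 - \frac{3}{2}|\xi|)$) to cover both choices of the $\pm$ sign in $y_2$ simultaneously with the same set, which is what lets a single clean statement work. Everything else is bookkeeping around part (i) and the triangle inequality.
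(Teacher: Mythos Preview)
Your proposal is correct and follows essentially the same route as the paper: apply part (i) with $y_1=\tau+|\xi|^2$, $y_2=\tau\pm|(\xi_1-\xi_1',\xi_2)|$, $\nu=2$, use $|\xi|\geq 2|\xi_1'|$ to bound $|(\xi_1-\xi_1',\xi_2)|\leq \tfrac32|\xi|$ and hence $|z|\in[|\xi|^2-\tfrac32|\xi|,\,|\xi|^2+\tfrac32|\xi|]$, deduce the inclusion of the cut-off set into $\mathcal{B}$, and finish by absorbing the linear $|\xi|$-term. The only cosmetic difference is that the paper handles the last step by completing the square in $|\xi|$, whereas you use $|\xi|\leq \tfrac12\bra{\xi}^2$; both are equivalent bookkeeping.
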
 
	\begin{proof}
		Proof of (i): See \cite{MR1491547} lemma 3.3. \\
		Proof of (ii): Using (i) with $z = |\xi|^2 \mp |(\xi_1 - \xi_1', \xi_2)|, \, y_1 = \tau + |\xi|^2 , \, y_2 = \tau \pm |(\xi_1 - \xi_1', \xi_2)|$ and $\nu = 2$, we have
		\begin{equation}
		\label{auxiliary 1 }
		\begin{split}
		|  |\xi|^2 \mp |(\xi_1 - \xi_1', \xi_2)|  |  & \leq 2 | \tau \pm |(\xi_1 - \xi_1', \xi_2)|| \\
		& \quad  + 2 |\tau + |\xi|^2| \chi \left(  \frac{2}{3} \leq \frac{| |\xi|^2 \mp |(\xi_1 - \xi_1', \xi_2)|  |}{\tau + |\xi|^2} \leq 2    \right) .
		\end{split} 
		\end{equation}
		Now we will take a close look on the set
		\[
		\mathcal{A} = \left( (\tau, \xi):  \frac{2}{3} \leq \frac{| |\xi|^2 \mp |(\xi_1 - \xi_1', \xi_2)|  |}{\tau + |\xi|^2} \leq 2    \right).
		\]
		By using triangle inequality and the fact that $|\xi| \geq 2 |\xi_1'|$, we have
		\[
		\begin{split}
		||\xi|^2 \mp |(\xi_1 - \xi_1', \xi_2)| | & \leq  |\xi|^2 + |(\xi_1 - \xi_1', \xi_2)| \\
		& \leq |\xi|^2 + |\xi| + |\xi_1'| \\
		& \leq |\xi|^2 + \frac{3}{2} |\xi|,
		\end{split} 
		\]
		and 
		\[
		\begin{split}
		| |\xi|^2 \mp |(\xi_1 - \xi_1', \xi_2)|  | & \geq |\xi|^2 - |(\xi_1 - \xi_1', \xi_2)| \\
		& \geq |\xi|^2 - \frac{3}{2} |\xi|.
		\end{split}
		\]
		Therefore, 
		\[
		\mathcal{A} \subset \mathcal{B}.
		\]
		Then \eqref{auxiliary 1 } follows
		\[
		\begin{split}
		|\xi|^2 & \leq |(\xi_1 - \xi_1', \xi_2)| + 2 | \tau \pm |(\xi_1 - \xi_1', \xi_2)|| + 2 |\tau + |\xi|^2| \chi (\mathcal{B}) \\
		& \leq |\xi | + |\xi_1'| + 2 | \tau \pm |(\xi_1 - \xi_1', \xi_2)|| + 2 |\tau + |\xi|^2| \chi (\mathcal{B}).
		\end{split} 
		\]
		Set $A = |\xi_1'| + 2 | \tau \pm |(\xi_1 - \xi_1', \xi_2)|| + 2 |\tau + |\xi|^2| \chi (\mathcal{B}) + \frac{1}{4}$, then
		\[
		|\xi| \leq \sqrt{A} + \frac{1}{2},
		\]
		and then \[  |\xi|^2 \leq A + \sqrt{A} + \frac{1}{4}  . \]
		Therefore,  \eqref{symbol 5} holds with suitable constant $C$.
	\end{proof}
	\textit{Remark}: \eqref{symbol 5} will be used to cancel the logarithmic singularities which will appear in \eqref{I_6}.\\
	In the next Lemma, we will prove \eqref{I_1}, \eqref{I_2}, \eqref{I_3} and \eqref{I_4}.
	\begin{lemma}
		\label{estimates for I_1, I_2, I_3, I_4.}
		Let $k,l \geq 0$ and  $k=l+ \epsilon$ with $0 < \epsilon \leq 1$. Let $b, \, b_1, \, c, \, c_1$ satisfy: \\
		(i) If $0<\epsilon <1$.
		\begin{gather}
		\label{b, b_1, epsilon <1 } \frac{\epsilon}{2} < b < \frac{1}{2},  \qquad \frac{1- \epsilon}{2} <  b_1 < \frac{1}{2} \\
		\label{c, c_1, epsilon < 1} \frac{\epsilon}{2} \leq  c_1 <  \frac{1}{2}, \qquad \frac{1- \epsilon}{2} \leq c < \frac{1}{2} .
		\end{gather}
		(ii) If $\epsilon = 1$.
		\begin{gather}
		\label{b, b_1, epsilon =1 } \frac{1}{2} < b < 1, \qquad b_1 = \frac{1}{2} \\
		\label{c, c_1, epsilon =1 } 0 < c < 1 - b,  \qquad c_1 = \frac{1}{2}
		\end{gather}
		then the estimates  \eqref{I_1}, \eqref{I_2}, \eqref{I_3} and \eqref{I_4} hold and $\theta_1, \, \theta_2, \, \theta_3, \, \theta_4 >0$.
	\end{lemma}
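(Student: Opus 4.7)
My plan is to reduce the four integrals to a common bilinear framework, exploiting that $\mathcal{F}_x\{Q\}$ and $\mathcal{F}_x\{Q^2\}$ are Schwartz in $\xi_1'$; in particular, $\bra{\xi_1'}^N\mathcal{F}_x\{Q^\alpha\}(\xi_1')$ lies in $L^1_{\xi_1'}\cap L^2_{\xi_1'}$ for every $N\ge 0$. First I would distribute the numerator using \eqref{symbol3}, writing $\bra{\xi}^k\le C\bra{\xi_1'}^k\bra{(\xi_1-\xi_1',\xi_2)}^k$ (and $|\xi|\bra{\xi}^l\le\bra{\xi}^{l+1}$ in $I_3,I_4$). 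After cancellation against $\bra{(\xi_1-\xi_1',\xi_2)}^{l}$ or $\bra{(\xi_1-\xi_1',\xi_2)}^{k}$ in the denominator, this leaves a residual factor $\bra{(\xi_1-\xi_1',\xi_2)}^{\delta}$ with $\delta=\epsilon$ in $I_1$, $\delta=0$ in $I_2$, and $\delta=1-\epsilon$ in $I_3,I_4$; the accompanying $\bra{\xi_1'}^N$ is harmlessly absorbed into the Schwartz weight.

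When $\delta>0$ I would absorb the residual into the resonance denominators via Lemma \ref{estimate of the symbols.}. Combining \eqref{symbol1} with the elementary bound $\bra{(\xi_1-\xi_1',\xi_2)}\le C\bra{\xi_1'}\bra{\xi}$ yields
\[
\bra{(\xi_1-\xi_1',\xi_2)}^{2}\le C\bigl(\bra{\tau+|\xi|^2}+\bra{\tau\pm|(\xi_1-\xi_1',\xi_2)|}+\bra{\xi_1'}^{2}\bigr)
\]
for $I_1$, with \eqref{symbol2} giving the analogous statement for $I_3,I_4$. Raising to the $\delta/2$ power splits each integral into three subcases in which, respectively, the Schrödinger weight, the wave weight, or an additional power of $\bra{\xi_1'}$ absorbs the residual.

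In each subcase, the $\xi_1'$-integration reduces to a finite Schwartz integral by the rapid decay of $\mathcal{F}_x\{Q^\alpha\}$, and after Minkowski in $\xi_1'$ I am left with a bilinear Plancherel/Cauchy--Schwarz estimate in $(\xi,\tau)$; this is exactly the $q=r=2$, $\gamma=1$ case of the preceding Strichartz Lemma. Because the cutoffs $\psi_T$ and $\psi_{2T}$ localize the time supports of $v$ and $w$ to $|t|\lesssim T$, any resonance weight with exponent strictly less than $1/2$ (which is the case under \eqref{b, b_1, epsilon <1}--\eqref{c, c_1, epsilon < 1}) produces a positive power $T^\theta$ through that Strichartz Lemma, yielding $\theta_1,\theta_2,\theta_3,\theta_4>0$.

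The main obstacle is the limit case $\epsilon=1$, where $b_1=c_1=1/2$ makes the Schrödinger resonance critical and the naive Cauchy--Schwarz step produces a logarithmic divergence. I would remedy this by invoking the refined estimate \eqref{symbol 5} of Lemma \ref{estimate of the symbols in limit case}, which confines the contribution of $\bra{\tau+|\xi|^2}^{1/2}$ to the narrow set $\mathcal{B}$ on which $|\tau+|\xi|^2|\sim|\xi|^2$. A dyadic summation on $\mathcal{B}$, combined with the Schwartz decay of $\mathcal{F}_x\{Q^\alpha\}(\xi_1')$, absorbs the logarithm; the strict inequality $c<1-b$ then delivers the positive $\theta_j$ through the remaining wave-resonance weight.
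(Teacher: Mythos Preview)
Your overall framework---using \eqref{symbol3} to trade $\bra{\xi}^k$ for powers of $\bra{\xi_1'}$ and $\bra{(\xi_1-\xi_1',\xi_2)}$, absorbing the residual $\delta$-power via Lemma~\ref{estimate of the symbols.}, swallowing all $\bra{\xi_1'}$ weights into the Schwartz decay of $\mathcal{F}_xQ^{\alpha}$, and closing with Plancherel/Cauchy--Schwarz together with the $q=r=2$, $\gamma=1$ Strichartz case---is exactly the paper's approach, and your treatment of the range $0<\epsilon<1$ is correct.

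Your analysis of the limit case $\epsilon=1$, however, misidentifies the difficulty. There is \emph{no} logarithmic divergence in $I_1,\dots,I_4$, and Lemma~\ref{estimate of the symbols in limit case} (the refined bound \eqref{symbol 5} with the set $\mathcal{B}$) is \emph{not} used here; it is reserved for $I_6$ in the next lemma. The crucial distinction is that the test functions $v_1,v_2$ in $I_1$--$I_4$ depend on both $\xi$ and $\tau$, so after Cauchy--Schwarz a factor like $\norma{\mathcal{F}^{-1}(\bra{\tau+|\xi|^2}^{-\beta}|\widehat{v_1}|)}_2$ with $\beta\ge 0$ is simply bounded by $\norma{v_1}_2$. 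The divergent $\tau$-integral you have in mind arises only in the $Y$-space estimates $I_5$--$I_8$, where the test function $v_3(\xi)$ is independent of $\tau$ and $\norma{\bra{\tau+|\xi|^2}^{-1/2}\widehat{v_3}(\xi)}_{L^2_{\xi,\tau}}$ genuinely diverges.

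The paper handles $\epsilon=1$ in $I_1$ by the same three-case splitting you describe: one subtracts $\epsilon/2=1/2$ from one of $(b,c_1,1/2)$. The point of the hypothesis $b>1/2$ is precisely that after subtraction from $b$ one still has $\alpha=b-1/2>0$ on the \emph{time-localized} factor $v$, and the positive power $T^{\theta_1}$ then comes from the Strichartz lemma applied to that factor (not to $v_1$). In the other two subcases $\alpha=b>0$ as well. For $I_2,I_3,I_4$ with $\epsilon=1$ the residual exponent $\delta$ is $0$, so nothing needs to be absorbed and $\alpha=b_1=1/2>0$ suffices directly. Your condition ``$c<1-b$'' plays no role in this lemma; it does not even involve the exponents appearing in $I_1$.
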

	\begin{proof}
		i) \textit{Proof of \eqref{I_1}}.\\
		By using symbol inequalities  \eqref{symbol1}, \eqref{symbol3} and the fact that $k=l+\epsilon$, we have
		\begin{equation*}
		\begin{split}
		|I_1| = & \left |  \int \frac{\mathcal{F}_x \{Q \} (\xi_1') \; \widehat{v}(\xi_1 -\xi_1', \xi_2, \tau) \; \widehat{v_1} (\xi, \tau) \, \bra{\xi}^k   }{\bra{\xi_1 - \xi_1' , \xi_2}^l \, \bra{\tau \pm | (\xi_1 - \xi_1', \xi_2 ) |}^b \, \bra{\tau + \xi^2}^{c_1}}   \, d \xi \, d \tau \, d \xi_1' \right | \\
		\leq & C    \int \frac{ |\bra{\xi_1'}^{l+1/2}  \mathcal{F}_x \{ Q \} (\xi_1')| \, |\widehat{v}| \, |\widehat{v_1}|   }{ \bra{\xi_1'}^{1/2} \, \bra{\tau \pm | (\xi_1 - \xi_1', \xi_2 ) |}^b \, \bra{\tau + \xi^2}^{c_1} }  \\ 
		& \qquad  \times  \left( \bra{\tau \pm | (\xi_1 - \xi_1', \xi_2 ) |} + \bra{\tau + \xi^2} +  \bra{\xi_1'}    \right)^{\epsilon/2}\, d \xi \, d \tau \, d \xi_1'.    
		\end{split}
		\end{equation*}
		Let us note that
		\[
		\begin{split}
		&  \frac{ \left( \bra{\tau \pm | (\xi_1 - \xi_1', \xi_2 ) |} + \bra{\tau + \xi^2} +  \bra{\xi_1'}    \right)^{\epsilon/2}}{\bra{\xi_1'}^{1/2} \, \bra{\tau \pm | (\xi_1 - \xi_1', \xi_2 ) |}^b \, \bra{\tau + \xi^2}^{c_1}} \\
		\approx & \frac{\bra{\tau \pm | (\xi_1 - \xi_1', \xi_2 ) |}^{\epsilon/2} + \bra{\tau + \xi^2}^{\epsilon/2} +  \bra{\xi_1'}^{\epsilon/2} }{\bra{\xi_1'}^{1/2} \, \bra{\tau \pm | (\xi_1 - \xi_1', \xi_2 ) |}^b \, \bra{\tau + \xi^2}^{c_1}}.
		\end{split}
		\]
		Then by subtracting $\epsilon/2$ from one of $(b, c_1, 1/2)$ and using the hypothesis $b > \epsilon/2$ ( $> 1/2$ if $\epsilon =1$), we only need to consider the terms of the following form, with $\alpha >0$, $\beta \geq 0$ and $s \geq 0$.
		\[
		\begin{split}
		\bar{I_1} = & \int \frac{ |\bra{\xi_1'}^{s}  \mathcal{F}_x \{Q \} (\xi_1')| \, |\widehat{v}| |\widehat{v_1}|}{\bra{\tau \pm | (\xi_1 - \xi_1', \xi_2 ) |}^{\alpha} \, \bra{\tau + |\xi|^2}^{\beta}} \, d \xi \, d \tau \, d \xi_1'  , \\
		\leq & \int \left( \int \left |  \bra{\xi_1'}^{s} \, \mathcal{F}_x \{ Q \} (\xi_1') \; \left( \bra{\tau \pm |\xi|}^{-\alpha} \widehat{v}   \right)(\xi_1 - \xi_1')    \right | \,  d \xi_1'  \right) \\
		& \qquad   \times \left(   \bra{\tau + |\xi|^2}^{-\beta} |\widehat{v_1}|   \right) \, d \xi \, d \tau .
		\end{split}
		\]
		By using Plancherel identity,  H\"older inequality, we have
		\[
		\begin{split}
		\bar{I_1}  \leq & \norma{\mathcal{F}^{-1} \left( \int \left |  \bra{\xi_1'}^{s} \, \mathcal{F}_x \{Q \} (\xi_1') \; \left( \bra{\tau \pm |\xi|}^{-\alpha} \widehat{v}   \right)(\xi_1 - \xi_1')    \right | \,  d \xi_1'  \right) }_2  \\
		& \qquad  \times \norma{\mathcal{F}^{-1} \left( \bra{\tau + |\xi|^2}^{-\beta} |\widehat{v_1} | \right)  }_2. \\
		\end{split}
		\]
		The first term on the right hand side is rewritten as follow
		\[
		\begin{aligned}
		& \mathcal{F}^{-1} \left(  \int \left | \bra{\xi_1'}^{s} \mathcal{F}_x \{ Q \} (\xi_1') \, \left(  \bra{\tau + \phi(\xi)}^{-b} \widehat{v}   \right)(\xi_1 -  \xi_1', \xi_2, \tau)    \right| \, d \xi_1'  \right) \\
		= & \mathcal{F}^{-1} \left( \int \left | \mathcal{F}_x \left( \bra{D_x}^{s} Q   \right) (\xi_1')  \right | \, \left |  \left(  \bra{\tau + \phi(\xi)}^{-b} \widehat{v}   \right)(\xi_1 -  \xi_1', \xi_2, \tau)  \right | \, d \xi_1'  \right)  \\
		= &  \mathcal{F}^{-1} \left( \left |  \mathcal{F}_x \left( \bra{D_x}^{s} Q  \right)  \right | \ast \left | \bra{\tau + \phi(\xi)}^{-b} \widehat{v}  \right |   (\xi_1) \right) \\
		= & \mathcal{F}_{yt}^{-1} \left( \mathcal{F}_x^{-1} \left | \mathcal{F}_x \left( \bra{D_x}^{s} Q  \right) \right | . \, \mathcal{F}_x^{-1} \left | \bra{\tau + \phi(\xi)}^{-b} \widehat{v}  \right |   \right)   \\
		= & \mathcal{F}_{yt}^{-1} \left( \left( \bra{D_x}^{s} Q \right) \,. \mathcal{F}_x^{-1} \left | \bra{\tau + \phi(\xi)}^{-b} \widehat{v}  \right |   \right) \\
		= & \left( \bra{D_x}^{s} Q(x) \right) \, \mathcal{F}^{-1} \left( \bra{\tau + \phi(\xi)}^{-b} |\widehat{v}| \right) .\\
		\end{aligned}
		\]
		We used the fact that $\mathcal{F}_x \left( \bra{D_x}^{s} Q \right)(\xi_1)= \bra{\xi_1}^{s} Q( \xi_1)  >0$.
		With $\phi(\xi) = \pm |\xi|$, it implies
		\[
		\begin{split}
		\bar{I_1} \leq & C \norma{ \left( \bra{D_x}^{s} Q(x) \right) \, \mathcal{F}^{-1} \left( \bra{\tau \pm |\xi|}^{-\alpha} |\widehat{v}| \right) }_2 \times \norma{\mathcal{F}^{-1} \left( \bra{\tau + |\xi|^2}^{-\beta} |\widehat{v_1} |  \right) }_2 \\
		\leq & C \norma{\mathcal{F}^{-1} \left( \bra{\tau \pm |\xi|}^{-\alpha} |\widehat{v}| \right) }_2 \times \norma{\mathcal{F}^{-1} \left( \bra{\tau + |\xi|^2}^{-\beta} |\widehat{v_1} | \right)  }_2 \\
		\leq & C T^{\theta_1} \norma{v}_2 \, \norma{v_1}_2.
		\end{split}
		\]
		We used the fact that $ \bra{D_x}^s Q(x) \in L^\infty_x$ for any $s \geq 0$, and $\theta_1 > 0 $ comes from the Strichartz estimate \eqref{strichartz 3} with $\alpha > 0$. Thus , \eqref{I_1} holds even in the limit case $c_1 = 1/2$.\\
		ii) The proof of \eqref{I_2} is easier than \eqref{I_1}, because we only need to use the estimate \eqref{symbol3} to remove $\bra{\xi}^k$ in the numerator and using: $\mathcal{F}_x(Q^2) >0$, $\bra{D_x}^s Q^2(x) \in L^\infty_x$ for any $s \geq 0$.\\
		iii) \textit{Proof of} \eqref{I_3} and \eqref{I_4}. We only prove \eqref{I_3}, \eqref{I_4} is treated similarly, because, with $a > 0$
		\[
		\begin{split}
		\norm{\bra{\tau - \xi^2}^{-a} \widehat{\bar{w}}(\tau, \xi)}{L^2_\tau L^2_\xi} = & \norm{\bra{\tau - \xi^2}^{-a} \overline{\widehat{w}(-\tau, -\xi)} }{L^2_\tau L^2_\xi}\\ 
		= & \norm{\bra{\tau + \xi^2}^{-a} \widehat{w} (\tau , \xi)}{L^2_\tau L^2_\xi}.
		\end{split}
		\]
		By using symbol inequalities  \eqref{symbol2}, \eqref{symbol3} and the fact that $k=l+\epsilon$, we have
		\[
		\begin{split}
		| I_3 |  = & \left | \int \frac{\mathcal{F}_x \{Q \} (\xi_1') \; \widehat{w}(\xi_1 -\xi_1', \xi_2, \tau) \; \widehat{v_2} (\xi, \tau) \, |\xi| \, \bra{\xi}^l   }{\bra{\xi_1 - \xi_1' , \xi_2}^k \, \bra{\tau + | (\xi_1 - \xi_1', \xi_2 ) |^2}^{b_1} \, \bra{\tau \pm |\xi|}^{c}}   \, d \xi \, d \tau \, d \xi_1' \right | \\
		\leq & C \int \frac{ \left | \bra{\xi_1'}^{k} \, \mathcal{F}_x \{ Q \} (\xi_1') \right | \, |\widehat{w}| \, |\widehat{v_2} | \, \bra{\xi}^{1- \epsilon} }{ \bra{\tau + |(\xi_1 - x_1', x_2)|^2}^{b_1} \, \bra{\tau \pm |\xi|}^c} \, d \xi \, d \tau \, d \xi_1' \\
		\leq & C \int \frac{\left | \bra{\xi_1'}^{k} \, \mathcal{F}_x \{ Q \} (\xi_1') \right | \, |\widehat{w}| \, |\widehat{v_2} | }{\bra{\tau + |(\xi_1 - x_1', x_2)|^2}^{b_1} \, \bra{\tau \pm |\xi|}^c } \\
		&  \qquad \times \left(  \bra{\tau \pm |\xi|} + \bra{\tau + |(\xi_1 - \xi_1', \xi_2)|^2 } + \bra{\xi_1'}^2   \right)^{\frac{1- \epsilon}{2}}.
		\end{split}
		\]
		Subtracting $\frac{1-\epsilon}{2}$ from one of $(b_1, c, 1/2)$ and using the fact that $b_1  > \frac{1 -\epsilon}{2}$, we only need to consider the terms of the following form with $\alpha>0, \, \beta \geq 0$ and $s>0$.
		\[
		\bar{I_3} = \int \frac{ \left |  \bra{\xi_1'}^{s} \, \mathcal{F}_x \{ Q \} (\xi_1')   \right | \, |\widehat{w}| \, |\widehat{v_2}|}{\bra{\tau + |(\xi_1 - \xi_1', \xi_2)|^2}^{\alpha} \, \bra{\tau \pm |\xi|}^{\beta}} \, d \xi \, d \tau \, d \xi_1'.
		\]
		Similarly as in the previous part, we have
		\[
		\begin{split}
		\bar{I_3}  \leq & \int \left(  \int \left |  \bra{\xi_1'}^{s} \, \mathcal{F}_x \{ Q \} (\xi_1') \, \left( \bra{\tau + |\xi|^2}^{- \alpha} \, \widehat{w}  \right)(\xi_1 - \xi_1')   \right | \, d \xi_1'  \right) \\
		&  \qquad \times \left(  \bra{\tau \pm |\xi|}^{- \beta} \, |\widehat{v_2}|    \right) \, d \xi \, d \tau \\
		\leq & \norma{\mathcal{F}^{-1} \left( \int \left |  \bra{\xi_1'}^{s} \, \mathcal{F}_x \{ Q \} (\xi_1') \, \left( \bra{\tau + |\xi|^2}^{- \alpha} \, \widehat{w}  \right)(\xi_1 - \xi_1')    \right | \, d \xi_1'  \right) }_{2} \\
		&  \qquad \times \norma{ \mathcal{F}^{-1} \left(  \bra{\tau \pm |\xi|}^{- \beta} \, |\widehat{v_2}|    \right) }_{2} \\
		\leq & C \norma{ \left( \bra{D_x}^{s} Q(x) \right) \, \mathcal{F}^{-1} \left( \bra{\tau + |\xi|^2}^{-\alpha} |\widehat{w}| \right) }_2 \\
		&  \qquad \times \norma{\mathcal{F}^{-1} \left( \bra{\tau \pm |\xi|}^{-\beta} |\widehat{v_2} |  \right) }_2 \\
		\leq & C \norma{\mathcal{F}^{-1} \left( \bra{\tau + |\xi|^2}^{-\alpha} |\widehat{w}| \right) }_2 \times \norma{\mathcal{F}^{-1} \left( \bra{\tau \pm |\xi|}^{-\beta} |\widehat{v_2} | \right)  }_2 \\
		\leq & C T^{\theta_3} \norma{w}_2 \, \norma{v_2}_2.
		\end{split}
		\]
		Therefore, \eqref{I_3} holds with $\theta_3 > 0$.
	\end{proof}
	\textit{Remark}: In the case $\epsilon = 1$,  we take $b>1/2$ since we need a positive power of $T$ on \eqref{I_1} and also on \eqref{I_6}.
	
	We now prove \eqref{I_5} and \eqref{I_6}.
	\begin{lemma}
		\label{estimates for I_5, I_6.}
		Let $k,l \geq 0$ and  $k=l+ \epsilon$ with $0 < \epsilon \leq 1$. Let $b, \, b_1$ satisfy: \\
		(i) If $0<\epsilon <1$.
		\begin{gather}
		\label{b, b_1, epsilon <1 case 2} \frac{\epsilon}{2} < b < \frac{1}{2},  \qquad \frac{1- \epsilon}{2} <  b_1 < \frac{1}{2} .
		\end{gather}
		(ii) If $\epsilon = 1$.
		\begin{gather}
		\label{b, b_1, epsilon =1 case 2} \frac{1}{2} < b < 1 , \qquad b_1 =\frac{1}{2}.
		\end{gather}
		Then \eqref{I_5}, \eqref{I_6}, \eqref{I_7} and \eqref{I_8} hold and $\theta_5, \, \theta_6, \, \theta_7, \, \theta_8 >0$.
	\end{lemma}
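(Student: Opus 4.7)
The plan parallels the proof of Lemma~\ref{estimates for I_1, I_2, I_3, I_4.}, adapted to two new features: the pairing is now against a test function $\widehat{v_3}(\xi) \in L^2_\xi$ that is independent of $\tau$, and the resolvent weight on the test side has integer exponent $1$ (from the definition of $Y^s$) rather than a subcritical $c, c_1 < 1/2$, so the $L^1_\tau$ integration built into the $Y^s$ norm is borderline.

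For each $j \in \{5,6,7,8\}$ I first apply \eqref{symbol3} (together with \eqref{symbol1} or \eqref{symbol2} whenever a residual factor $\bra{(\xi_1 - \xi_1', \xi_2)}^\epsilon$ appears in the numerator) to peel off the Sobolev-type weights, transferring them onto $\mathcal{F}_x\{Q\}(\xi_1')$ or $\mathcal{F}_x\{Q^2\}(\xi_1')$. Since $Q$ and $Q^2$ are Schwartz, $\bra{\xi_1'}^N \mathcal{F}_x\{Q\}$ and $\bra{\xi_1'}^N \mathcal{F}_x\{Q^2\}$ lie in $L^1_{\xi_1'}$ for every $N \ge 0$; the $\xi_1'$-convolution then collapses, exactly as in Lemma~\ref{estimates for I_1, I_2, I_3, I_4.}, to multiplication in physical space by the bounded function $\bra{D_x}^N Q(x)$ or $\bra{D_x}^N Q^2(x)$, and therefore contributes only a universal constant.

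For $I_5$, $I_7$ and $I_8$ what remains is a bilinear pairing of $w$ (respectively $\bar w$ or $v$) against $v_3$, with dispersive resolvents in the denominator. I dispose of the $v_3$ side by Plancherel: since $\widehat{v_3}$ is independent of $\tau$, $\|\mathcal{F}^{-1}(\bra{\tau + \phi(\xi)}^{-1} \widehat{v_3}(\xi))\|_{L^2_{x,t}}^2 = \int |\widehat{v_3}(\xi)|^2 \bigl(\int \bra{\tau + \phi(\xi)}^{-2} d\tau\bigr) d\xi \le C \|v_3\|_{L^2_x}^2$, as $\int \bra{\tau}^{-2} d\tau$ is a finite universal constant. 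The surviving dispersive factor $\|\mathcal{F}^{-1}(\bra{\tau + \phi(\xi)}^{-b_1} |\widehat w|)\|_{L^2_{x,t}}$ is controlled by $C T^{\theta_j} \|w\|_2$ via the Strichartz estimate \eqref{strichartz 3} with $q = r = 2$, $\gamma = 1$ and $a = b_1 > 0$ (or $a = b > 0$ for $I_7, I_8$), which extracts the strictly positive power of $T$ exactly as in the preceding lemma.

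The hard case is $I_6$. Here the residual factor $\bra{(\xi_1 - \xi_1', \xi_2)}^\epsilon$ must be distributed among the three denominators; using $\bra{(\xi_1-\xi_1',\xi_2)} \le C(\bra{\xi_1'} + \bra{\xi})$ and then \eqref{symbol1} on $\bra{\xi}^\epsilon$, the branch in which $\bra{\xi}^\epsilon$ is absorbed into $\bra{\tau+|\xi|^2}^{\epsilon/2}$ leaves $\bra{\tau+|\xi|^2}^{-1+\epsilon/2}$ on the $v_3$ side, and in the limit case $\epsilon = 1$ this exponent becomes $-1/2$, making the squared $\tau$-integral logarithmically divergent. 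I repair this by splitting $|\xi| \le 2|\xi_1'|$, on which $\bra{\xi} \lesssim \bra{\xi_1'}$ is absorbed into the Schwartz decay of $\mathcal{F}_x\{Q\}$, from $|\xi| \ge 2|\xi_1'|$, on which \eqref{symbol 5} replaces $\bra{\tau+|\xi|^2}$ by $\bra{\tau+|\xi|^2}\chi(\mathcal{B})$. The characteristic function confines $\tau$ to a dyadic region where $|\tau+|\xi|^2| \sim |\xi|^2$, so $\int \chi(\mathcal{B}) \bra{\tau+|\xi|^2}^{-1}\, d\tau$ is bounded uniformly in $\xi$, and the $L^1_\tau$ integration becomes admissible. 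The positive power $T^{\theta_6}$ is then extracted from the surviving wave weight $\bra{\tau \pm |(\xi_1-\xi_1',\xi_2)|}^{-b}$ via \eqref{strichartz 3}, which requires $b > 1/2$ precisely as prescribed by \eqref{b, b_1, epsilon =1 case 2}. This cancellation of the logarithmic singularity via \eqref{symbol 5} is the main obstacle.
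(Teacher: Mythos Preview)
Your argument is correct and follows the paper's proof essentially step for step: in case (i) you reduce $I_5,\dots,I_8$ to the shape handled in Lemma~\ref{estimates for I_1, I_2, I_3, I_4.}, observe that the $v_3$-side $\tau$-integral $\int \bra{\tau+\phi(\xi)}^{-2(1-\epsilon/2)}\,d\tau$ (or $\int \bra{\tau+\phi(\xi)}^{-2(1-(1-\epsilon)/2)}\,d\tau$) converges because the exponent stays strictly below $-1$, and extract the positive power of $T$ from the remaining Strichartz factor; in case (ii) you isolate $I_6$ as the only genuinely borderline term, split $|\xi|\le 2|\xi_1'|$ versus $|\xi|\ge 2|\xi_1'|$, and on the latter region invoke \eqref{symbol 5} so that the would-be divergent $\int \bra{\tau+|\xi|^2}^{-1}\,d\tau$ is cut down to the dyadic shell $\mathcal{B}$, giving the uniform bound $C(\ln 3)^{1/2}\|v_3\|_{L^2_x}$ exactly as in the paper. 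One harmless slip: for $I_7$ and $I_8$ the dispersive exponent on the $w$-side is $b_1$, not $b$, so the Strichartz input should be $a=b_1$ (or $a=b_1-(1-\epsilon)/2>0$ in the worst branch of case (i)); since $b_1>0$ throughout, this does not affect the conclusion.
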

	\begin{proof}
		i) If $0< \epsilon < 1$ then the proofs of \eqref{I_5}, \eqref{I_6}, \eqref{I_7} and \eqref{I_8} are very similar as the proofs of \eqref{I_1}, \eqref{I_2}, \eqref{I_3} and \eqref{I_4}.   It remains to prove two inequalities of the form
		\[
		\norm{\bra{\tau + |\xi|^2}^{-1/2 -\delta} \widehat{v_3}(\xi)}{L^2_\xi L^2_\tau} \leq C \norma{v_3}_{L^2_x},
		\] 
		and
		\[
		\norm{\bra{\tau +\pm |\xi|}^{-1/2 -\delta} \widehat{v_3}(\xi)}{L^2_\xi L^2_\tau} \leq C \norma{v_3}_{L^2_x},
		\]
		where $-1/2 - \delta = 1 - \epsilon/2$ or $1 - (1- \epsilon) /2 $. Therefore $\delta >0$ and the inequalities hold. The positive power of $T$ comes from the Strichartz estimate for remaining term.\\
		ii) If $\epsilon =1$ we only consider \eqref{I_6}, because in \eqref{I_5} $I_5$ does not depend on $l$. In \eqref{I_7} and \eqref{I_8} we can use \eqref{symbol3}. We separate the integration region of $I_6$ into two subregions:\\
		Region $|\xi| \leq 2 |\xi_1'|$: The proof is the same as in part i) for \eqref{I_5} since we can eliminate the term $\bra{\xi}^k$.\\
		Region $|\xi| \geq 2 |\xi_1' |$: By using \eqref{symbol 5}, it remains to estimate the following term
		\[
		\begin{split}
		& \bigg \lVert \bra{\tau + |\xi|^2}^{-\frac{1}{2}}  \widehat{v_3}(\xi)  \chi \left(  \frac{1}{2} (|\xi|^2 - \frac{3}{2} |\xi|) \leq |\tau + |\xi|^2| \leq \frac{3}{2} ( |\xi|^2 + \frac{3}{2} |\xi|  )   \right); L^2_\xi L^2_\tau \bigg \rVert \\
		& \quad \leq C (ln (3))^{1/2} \norma{v_3}_{L^2_x}.
		\end{split} 
		\]
		Therefore \eqref{I_6} holds, and $\theta_6 > 0$ comes from Strichartz estimate for the remaining term.
	\end{proof}
	\subsection{Estimates for nonlinear terms}
	\label{estimates for nonlinear terms}
	\textit{Remark}: In the following Lemmas, we are allowed to assume that $n_{\pm}$  and $u$ have compact support in time.
	\begin{lemma} (\textbf{Estimate for $n_{\pm}u$ in $X_1^{k, -c_1}$}) (cf. \cite{MR1491547} Lemma 3.4). 
		\label{estimate of the nonlinear terms 1}
		Let $b_0>1/2$ and $0<b, \, c_1, \, b_1 \leq b_0 < b+c_1 + b_1 - c_0$, with $ 0 < c_0 \leq Min (b, \, c_1, \, b_1) $. And
		\begin{gather*}
		2b_0 < b+ c_1 + b_1, \\
		l \geq 0, |k| \leq l+ 2 c_0. 
		\end{gather*}
		Then 
		\begin{equation}
		\label{nonlinear estimate 1}
		\norm{n_\pm u}{X_1^{k,-c_1}} \leq C T^\theta \norm{n_\pm}{X_2^{l,b}} \norm{u}{X_1^{k,b_1}} 
		\end{equation}
		holds , with $\theta > 0$.
	\end{lemma}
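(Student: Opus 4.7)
The plan is to follow the same dyadic/duality scheme that handled the linear terms in the previous subsection, but now applied to a genuinely trilinear integral. Pairing $n_\pm u$ with a generic test function in $X_1^{-k,c_1}$ and writing
\[
\widehat{n_\pm}=\langle\xi\rangle^{-l}\langle\tau\pm|\xi|\rangle^{-b}\widehat{v},\quad
\widehat{u}=\langle\xi\rangle^{-k}\langle\tau+|\xi|^2\rangle^{-b_1}\widehat{w},
\]
and the dual weight $\langle\xi\rangle^{k}\langle\tau+|\xi|^{2}\rangle^{-c_{1}}\widehat{v_1}$, the target estimate \eqref{nonlinear estimate 1} is equivalent to the $L^2$-trilinear bound
\[
J:=\int \frac{\langle\xi\rangle^{k}\,|\widehat v(\xi_1,\tau_1)|\,|\widehat w(\xi-\xi_1,\tau-\tau_1)|\,|\widehat{v_1}(\xi,\tau)|}{\langle\xi_1\rangle^{l}\langle\xi-\xi_1\rangle^{k}\langle\tau_1\pm|\xi_1|\rangle^{b}\langle\tau-\tau_1+|\xi-\xi_1|^2\rangle^{b_1}\langle\tau+|\xi|^2\rangle^{c_1}}\lesssim T^\theta\|v\|_2\|w\|_2\|v_1\|_2,
\]
with $v,w,v_1\in L^2$.

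The first step is the purely algebraic one: use the Peetre-type inequality $\langle\xi\rangle^{k}\lesssim \langle\xi_1\rangle^{|k|}\langle\xi-\xi_1\rangle^{k}+\langle\xi_1\rangle^{k}\langle\xi-\xi_1\rangle^{|k|}$ together with the hypothesis $|k|\le l+2c_0$ to reduce the frequency weight in $J$ to at most $\langle\xi_1\rangle^{2c_0}\langle\xi-\xi_1\rangle^{2c_0}$. The second step is to exploit the resonance identity
\[
(\tau+|\xi|^2)-\bigl(\tau-\tau_1+|\xi-\xi_1|^2\bigr)-\bigl(\tau_1\pm|\xi_1|\bigr)=2\xi\cdot\xi_1-|\xi_1|^2\mp|\xi_1|,
\]
which guarantees that in every region of frequency space at least one of the three modulation weights $\langle\tau+|\xi|^2\rangle$, $\langle\tau-\tau_1+|\xi-\xi_1|^2\rangle$, $\langle\tau_1\pm|\xi_1|\rangle$ dominates a quantity proportional to a positive power of the product $\langle\xi_1\rangle\langle\xi-\xi_1\rangle$. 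Combined with the previous step, this makes it possible to absorb the $\langle\xi_1\rangle^{2c_0}\langle\xi-\xi_1\rangle^{2c_0}$ factor into the modulation weights, at the cost of replacing each of $b,b_1,c_1$ by $b-c_0,b_1-c_0,c_1-c_0$; the hypothesis $c_0\le\min(b,b_1,c_1)$ keeps all three exponents positive, and the inequalities $b_0<b+b_1+c_1-c_0$ and $2b_0<b+b_1+c_1$ provide the slack needed for a Hölder distribution of powers below $b_0$.

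With the weights thus redistributed, I would apply Plancherel and Hölder in spacetime and bound each factor by a mixed-norm Strichartz estimate of the type of the Lemma recalled at the end of Section \ref{estimates for linear terms}, applied in each case to the Schrödinger symbol for the $u,v_1$ factors and to the wave symbol for the $v$ factor. Choosing the dual pair $(q,r)=(4,4)$ for Schrödinger in two space dimensions and the corresponding wave pair produces the $L^2\times L^2\times L^2$ bound $\|v\|_2\|w\|_2\|v_1\|_2$. The factor $T^\theta$ with $\theta>0$ arises by picking $\gamma>0$ in \eqref{strichartz 1}-\eqref{strichartz 2} on one of the three factors, which is allowed precisely because the strict inequality $b_0<b+b_1+c_1-c_0$ leaves a positive gap.

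The main technical obstacle is the bookkeeping in Step~2: one must verify that after the worst case of Peetre splitting the leftover frequency factor $\langle\xi_1\rangle^{2c_0}\langle\xi-\xi_1\rangle^{2c_0}$ can really be absorbed by the modulation weights in every sub-region of the case split dictated by the resonance identity, without ever requiring more decay than $b-c_0,b_1-c_0,c_1-c_0$ provide. This is exactly the balance that the quantitative constraints $|k|\le l+2c_0$, $c_0\le\min(b,b_1,c_1)$ and $2b_0<b+b_1+c_1$ are designed to enforce, and it is this balance (rather than the Strichartz estimates themselves) that determines the range of admissible $(k,l)$ stated in \eqref{condition of k and l}.
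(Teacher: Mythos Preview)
The paper itself does not prove this lemma: it is quoted verbatim from Ginibre--Tsutsumi--Velo \cite{MR1491547}, Lemma~3.4, and the proof is left entirely to that reference (see the remark opening Section~\ref{estimates for nonlinear terms}). Your outline is consistent with the strategy of \cite{MR1491547}: duality, a symbol reduction using $|k|\le l+2c_0$, the resonance identity, a case split on the dominant modulation weight, and Strichartz-type bounds to close.

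One claim in your Step~2 is stated too strongly, however, and as written it is false. You assert that the resonance identity guarantees that the maximal modulation always dominates a positive power of $\langle\xi_1\rangle\langle\xi-\xi_1\rangle$. Take $\xi_1\perp(\xi-\xi_1)$ with $|\xi_1|$ of order~$1$ and $|\xi-\xi_1|$ large: then $2\xi\cdot\xi_1-|\xi_1|^2\mp|\xi_1|=|\xi_1|^2\mp|\xi_1|$ is bounded while $\langle\xi_1\rangle\langle\xi-\xi_1\rangle$ is not. The resolution in \cite{MR1491547} is that the reduction in your Step~1 does not in fact leave the symmetric factor $\langle\xi_1\rangle^{2c_0}\langle\xi-\xi_1\rangle^{2c_0}$: after separating the cases $|\xi_1|\lesssim|\xi-\xi_1|$ and $|\xi_1|\gtrsim|\xi-\xi_1|$ (using $l\ge0$ and $|k|\le l+2c_0$), the leftover weight involves at most one of the two frequencies, and it is precisely in the regions where that single factor is large that the resonance function is also large enough to absorb it. In the configuration above there is nothing to absorb, since $\langle\xi_1\rangle\sim 1$. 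So Steps~1 and~2 must be carried out in tandem, region by region, rather than sequentially; this is the ``bookkeeping'' you correctly identify as the main obstacle in your final paragraph, and it is spelled out in full in \cite{MR1491547}.
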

	\begin{lemma} (\textbf{Estimate for $\omega |u|^2$ in $X_2^{l, -c}$ } ) (cf. \cite{MR1491547} Lemma 3.5 ).
		\label{estimate of the nonlinear terms 2}
		Let $b_0 > 1/2$ and let $0<c,b_1 \leq b_0 < c+ 2b_1 - \bar{c_0}$ with $0< \bar{c_0} \leq Min(c,b_1)$. And
		\begin{gather*}
		2 b_0 < c + 2b_1, \\
		2k \geq l+1, \qquad k \geq l+1 - 2 \bar{c_0}, \quad k \geq 0.
		\end{gather*}
		Then 
		\begin{equation}
		\label{nonlinear estimate 2}
		\norm{\omega |u|^2}{X_2^{l, -c}}  \leq C T^\theta \, \norm{u}{X_1^{k,b_1}}^2 
		\end{equation}
		holds,  with $\theta > 0$.
	\end{lemma}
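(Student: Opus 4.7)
\textit{Proof plan for Lemma \ref{estimate of the nonlinear terms 2}.} The plan is to proceed by duality, exactly as for the linear estimates in Section \ref{estimates for linear terms}: I pair $\omega|u|^2$ with a generic test function $g \in X_2^{-l,c}$ whose Fourier transform is written $\widehat{g}=\bra{\xi}^l\bra{\tau\pm|\xi|}^{-c}\widehat{v_2}$ with $v_2\in L^2$. Writing $\widehat{u}=\bra{\xi}^{-k}\bra{\tau+|\xi|^2}^{-b_1}\widehat{w}$ and $\widehat{\bar u}=\bra{\xi}^{-k}\bra{\tau-|\xi|^2}^{-b_1}\widehat{\bar w}$ as in \S\ref{estimates for linear terms}, and using $\widehat{|u|^2}=\widehat u\ast\widehat{\bar u}$, the required estimate reduces to controlling a convolution integral of the form
\[
J=\int \frac{|\xi|\,\bra{\xi}^l\,|\widehat{w}(\xi_1,\tau_1)|\,|\widehat{\bar w}(\xi-\xi_1,\tau-\tau_1)|\,|\widehat{v_2}(\xi,\tau)|}
{\bra{\xi_1}^k\bra{\xi-\xi_1}^k\bra{\tau_1+|\xi_1|^2}^{b_1}\bra{(\tau-\tau_1)-|\xi-\xi_1|^2}^{b_1}\bra{\tau\pm|\xi|}^{c}}
\]
by $C T^{\theta}\|w\|_2^2\|v_2\|_2$.

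The main algebraic tool is the resonance identity
\[
(\tau\pm|\xi|)-(\tau_1+|\xi_1|^2)+((\tau-\tau_1)-|\xi-\xi_1|^2)=\pm|\xi|-|\xi_1|^2-|\xi-\xi_1|^2,
\]
which shows that the largest of the three phase weights $\bra{\tau\pm|\xi|}$, $\bra{\tau_1+|\xi_1|^2}$, $\bra{(\tau-\tau_1)-|\xi-\xi_1|^2}$ dominates $\big||\xi|^2+|\xi_1|^2+|\xi-\xi_1|^2\mp|\xi|\big|$ up to a constant, hence controls $\bra{\xi_1}^2+\bra{\xi-\xi_1}^2$ modulo the $|\xi|$ correction. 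Combined with the elementary $\bra{\xi}\leq C\bra{\xi_1}\bra{\xi-\xi_1}$, this lets me absorb the $|\xi|\bra{\xi}^l$ in the numerator into products of the denominator weights, provided the exponent budget $2k\geq l+1$ and $k\geq l+1-2\bar c_0$ is satisfied.

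I would then split into cases according to which of the three phase weights is dominant. In each case I transfer a fraction $\bar c_0$ of that weight to the $\bra{\xi}^l|\xi|$ factor via the above relation, leaving phase exponents strictly smaller than $b_0$ in the three remaining factors, and conclude by Cauchy--Schwarz together with the $L^2_tL^2_x$ Strichartz estimate of the preceding Lemma (the special case $q=r=2$ mentioned in the Remark, applied once to the Schr\"odinger multiplier $\bra{\tau+|\xi|^2}^{-a}$ and once to the wave multiplier $\bra{\tau\pm|\xi|}^{-a}$). The cut-off hypothesis $2b_0<c+2b_1$ guarantees that after extracting $\bar c_0$ from the dominant phase weight one still has a summable exponent, and the strict inequality $b_0<c+2b_1-\bar c_0$ together with the compact-in-time support assumption of the Remark produces the positive power $T^{\theta}$ via the Strichartz lemma (which gives $\theta>0$ whenever the exponent $a$ is strictly positive).

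The main obstacle is the case where $\bra{\tau\pm|\xi|}$ itself is the dominant phase weight: then one has to give up a $\bar c_0$-power of $\bra{\tau\pm|\xi|}^{c}$ in the denominator, which is why the assumption $\bar c_0\leq c$ is needed and why $c$ cannot be too small; this case forces the condition $k\geq l+1-2\bar c_0$. The remaining parallel/high-low frequency interactions require balancing $\bra{\xi_1}^k\bra{\xi-\xi_1}^k$ against $\bra{\xi}^{l+1}$ by $\bra{\xi}\leq C\bra{\xi_1}\bra{\xi-\xi_1}$, which is exactly where $2k\geq l+1$ enters. Since the structure and case analysis are identical to those of Ginibre--Tsutsumi--Velo \cite{MR1491547} Lemma 3.5 (no $Q$ appears and the estimate is translation-invariant, so none of the difficulties of Section \ref{estimates for linear terms} arise), I would present the argument by checking that the exponent conditions are consistent with those in that reference and then citing it for the detailed case analysis.
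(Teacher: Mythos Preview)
Your proposal is correct and matches the paper exactly: the paper gives no proof of this lemma at all, simply citing \cite{MR1491547} Lemma~3.5 (as announced at the end of \S\ref{Linear estimate}: ``The estimates for nonlinear terms were proved completely in \cite{MR1491547}''), and your plan is ultimately to do the same. Two small corrections to your sketch, neither of which affects the conclusion since you defer to the reference: the resonance identity should read $(\tau\pm|\xi|)-(\tau_1+|\xi_1|^2)-\big((\tau-\tau_1)-|\xi-\xi_1|^2\big)=\pm|\xi|-|\xi_1|^2+|\xi-\xi_1|^2$ (your version with a plus sign before the third bracket leaves a spurious $2\tau-2\tau_1$), and the GTV proof in fact uses the full range of Strichartz exponents~$(q,r)$, not just $q=r=2$ --- the paper's Remark restricting to $q=r=2$ applies only to the \emph{linear} terms of \S\ref{estimates for linear terms}.
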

	\begin{lemma} (\textbf{Estimate for $n_{\pm} u$ in $Y_1^k$}) (cf. \cite{MR1491547} Lemma 3.6).
		\label{estimate of the nonlinear terms 3}
		Let $b_0 > 1/2$, let $0 < c_1 < 1/2$ and let $0 < b, b_1 \leq b_0 < b + c_1 + b_1 - c_0$, with $0 < c_0 \leq min (b, 1/2, b_1)$. And
		\begin{gather*}
		l \geq 0, \qquad |k| \leq l + 2 c_0, \\
		2 b_0 < b + c_1 + b_1.
		\end{gather*}
		Then 
		\begin{equation}
		\label{nonlinear esitmate 3}
		\norm{n_{\pm} u}{ Y_1^k} \leq C T^\theta \norm{n_{\pm}}{X_2^{l,b}} \norm{u}{X_1^{k,b_1}}
		\end{equation} 
		holds, with $\theta > 0$.
	\end{lemma}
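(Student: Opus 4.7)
The plan is to reduce the $Y_1^k$-bound to the $X_1^{k,-c_1'}$-bound of Lemma~\ref{estimate of the nonlinear terms 1} for a suitable $c_1' < 1/2$, by means of a Cauchy--Schwarz argument in the time frequency variable. The point is that the $L^1_\tau$ structure in the definition of $Y_1^k$ is controlled by an $L^2_\tau$ norm with weight arbitrarily close to $\bra{\tau + |\xi|^2}^{-1/2}$; hence any $X_1^{k,-c_1'}$ estimate with $c_1'<1/2$ transfers to $Y_1^k$, and the hypothesis $c_1<1/2$ of the present lemma is precisely what leaves room for such a $c_1'$.

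First I would unfold
\[
\norma{n_\pm u}_{Y_1^k} = \norma{\bra{\xi}^k \bra{\tau + |\xi|^2}^{-1} \widehat{n_\pm u}(\xi,\tau)}_{L^2_\xi L^1_\tau},
\]
split $\bra{\tau+|\xi|^2}^{-1} = \bra{\tau+|\xi|^2}^{-1/2-\delta}\cdot\bra{\tau+|\xi|^2}^{-1/2+\delta}$ for a small $\delta>0$, and apply Cauchy--Schwarz in $\tau$ using the uniform-in-$\xi$ integrability $\int \bra{\tau+a}^{-1-2\delta}\,d\tau \le C_\delta$ to obtain
\[
\norma{n_\pm u}_{Y_1^k} \le C_\delta \norma{n_\pm u}_{X_1^{k,-(1/2-\delta)}}.
\]

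Next I would set $c_1' := 1/2 - \delta$ with $\delta>0$ sufficiently small, and if necessary replace $c_0$ by a slightly smaller $c_0'$, so that the hypotheses of Lemma~\ref{estimate of the nonlinear terms 1} hold for the triple $(b, c_1', b_1)$ with parameter $c_0'$. The assumption $c_1<1/2$ lets me choose $c_1' \in (c_1,1/2)$, and the strict inequalities $b_0 < b+c_1+b_1-c_0$ and $2b_0 < b+c_1+b_1$ of the present lemma are preserved under the perturbation $c_1\mapsto c_1'$, $c_0\mapsto c_0'$ provided $\delta$ is small enough. Invoking Lemma~\ref{estimate of the nonlinear terms 1} then gives
\[
\norma{n_\pm u}_{X_1^{k,-c_1'}} \le C T^\theta \norma{n_\pm}_{X_2^{l,b}} \norma{u}_{X_1^{k,b_1}}
\]
with $\theta>0$, which chained with the Cauchy--Schwarz reduction yields the claim.

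The main obstacle is purely one of parameter bookkeeping: I must verify that the decrement $c_0 \rightsquigarrow c_0'$ can be made compatible with both $c_0' \le \min(b, c_1', b_1)$ \emph{and} $|k| \le l + 2 c_0'$. If $c_0$ is already at its upper bound $1/2$ and the index condition $|k| \le l+2c_0$ is saturated, shrinking $c_0$ would spoil the latter; in this genuine endpoint situation the reduction above has to be supplemented by arguing directly in $Y_1^k$ in the dyadic region where the saving cannot come from the time symbol, exactly as handled in \cite{MR1491547}. Apart from this endpoint care, no dispersive input beyond what is already encoded in Lemma~\ref{estimate of the nonlinear terms 1} is required, and the positive power of $T$ is inherited from the time-cutoff mechanism in that lemma.
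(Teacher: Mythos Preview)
The paper does not give its own proof of this lemma; it simply quotes \cite{MR1491547}, Lemma~3.6. So the comparison is with the GTV argument.

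Your reduction is correct and is the natural soft approach: the Cauchy--Schwarz step in $\tau$ gives the embedding $X_1^{k,-(1/2-\delta)}\hookrightarrow Y_1^k$ for any $\delta>0$, and since $c_1':=1/2-\delta>c_1$, the strict inequalities $b_0<b+c_1+b_1-c_0$ and $2b_0<b+c_1+b_1$ in the hypotheses of Lemma~\ref{estimate of the nonlinear terms 1} are only improved when $c_1$ is replaced by $c_1'$. Your parameter bookkeeping is accurate: whenever $c_0<1/2$ one may take $\delta$ so small that $c_1'>c_0$, keep $c_0'=c_0$, and nothing is lost; the only obstruction is the genuine endpoint $c_0=1/2$, $|k|=l+1$, which you correctly isolate.

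The direct GTV proof differs precisely in that it does not pass through an $X_1^{k,-c_1'}$ norm. It works with the full weight $\bra{\tau+|\xi|^2}^{-1}$ from the start and, in the region where the Schr\"odinger symbol is dominant, uses a characteristic-function localization (the same device as in Lemma~\ref{estimate of the symbols in limit case}) to turn the would-be logarithmic divergence of your Cauchy--Schwarz step into a bounded contribution. This is exactly what is needed at $c_0=1/2$, $|k|=l+1$. Note also that the paper later applies this lemma with $c_0=c_1=1/2$ in the case $k=l+1$, which strictly speaking lies outside the stated hypothesis $c_1<1/2$; that application really does require the direct \cite{MR1491547} argument rather than your reduction. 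Away from that endpoint, your route is complete and arguably cleaner.
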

	\begin{lemma} (\textbf{Estimate for $\omega |u|^2$ in $Y_2^l$}) (cf. \cite{MR1491547} Lemma 3.7).
		\label{estimate of the nonlinear terms 4}
		Let $b_0 > 1/2$, let $0< c < 1/2$ and let $0 < b_1 \leq b_0 < c + 2 b_1 - \bar{c_0}$ where $0 < \bar{c_0} \leq Min (1/2, b_1)$. And 
		\begin{gather*}
		2 b_0 < c + 2 b_1, \\
		2k \geq l+1, \qquad k \geq l + 1 - 2 \bar{c_0} .
		\end{gather*}
		Then 
		\begin{equation}
		\label{nonlinear estimate 4} \norm{\omega |u|^2}{Y_2^l} \leq T^\theta \norm{u}{X_1^{k,b_1}}^2
		\end{equation}
		holds, with $\theta>0$.
	\end{lemma}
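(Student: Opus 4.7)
My plan is to deduce the bound from the already-established $X$-space estimate in Lemma~\ref{estimate of the nonlinear terms 2} via an elementary soft embedding. For any $c<1/2$, the Cauchy--Schwarz inequality in $\tau$ at fixed $\xi$ gives
\begin{equation*}
\int |\bra{\tau\pm|\xi|}^{-1}\widehat{f}(\xi,\tau)|\, d\tau \leq C_c \Bigl(\int \bra{\tau\pm|\xi|}^{-2c}|\widehat{f}(\xi,\tau)|^2\, d\tau\Bigr)^{1/2},
\end{equation*}
since $\int \bra{s}^{-2+2c}\, ds < \infty$. Squaring, weighting by $\bra{\xi}^{2l}$, and integrating in $\xi$ yields the soft embedding $\norm{f}{Y_2^l} \leq C_c \norm{f}{X_2^{l,-c}}$.

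Specializing to $f=\omega(|u|^2)$ and invoking Lemma~\ref{estimate of the nonlinear terms 2} reduces the statement to checking that the parameters $(b_0,c,b_1,\bar{c_0},k,l)$ of the present lemma are also admissible in the earlier one. The only genuine discrepancy is that here $\bar{c_0}$ is allowed to range up to $\min(1/2,b_1)$ rather than $\min(c,b_1)$, so the embedding argument delivers the claim immediately in the subregime $\bar{c_0}\leq c$, yielding $\norm{\omega|u|^2}{Y_2^l}\leq C_c \norm{\omega|u|^2}{X_2^{l,-c}} \leq C T^\theta \norm{u}{X_1^{k,b_1}}^2$ with $\theta>0$.

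For the residual regime $c<\bar{c_0}\leq\min(1/2,b_1)$, I would follow the direct bilinear argument of \cite{MR1491547}: expand $\widehat{|u|^2}$ as the convolution $\int \widehat{u}(\xi_1,\tau_1)\overline{\widehat{u}}(\xi_1-\xi,\tau_1-\tau)\, d\xi_1\, d\tau_1$, dualize against the pre-dual $L^2_\xi L^\infty_\tau$ of $L^2_\xi L^1_\tau$, and redistribute the weight $|\xi|\bra{\xi}^l$ using the resonance identity that relates $|\xi_1|^2-|\xi_1-\xi|^2\mp|\xi|$ to the three modulations $\bra{\tau_1+|\xi_1|^2}$, $\bra{\tau_1-\tau+|\xi_1-\xi|^2}$, and $\bra{\tau\pm|\xi|}$, combined with the two-dimensional Strichartz estimate of Lemma~3.1 in the case $q=r=4$. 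The main technical point is precisely this dualization: replacing $L^2_\tau$ by $L^\infty_\tau$ costs an extra half-power of $\bra{\tau\pm|\xi|}$, which is just affordable because the quadratic resonance function forces at least one of the three modulations to be $\gtrsim \bra{\xi}$. This is what opens the window all the way to $\bar{c_0}=\min(1/2,b_1)$ under the constraint $k\geq l+1-2\bar{c_0}$, and the factor $T^\theta$ with $\theta>0$ emerges from the time cutoff through the Strichartz estimate exactly as in the proof of Lemma~\ref{estimate of the nonlinear terms 2}.
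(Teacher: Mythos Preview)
The paper itself gives no proof of this lemma: it is simply quoted from \cite{MR1491547}, Lemma~3.7, as one of the four nonlinear estimates imported wholesale from Ginibre--Tsutsumi--Velo. So there is nothing in the paper to compare against beyond the citation.

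Your soft-embedding reduction is correct and is a genuinely cleaner route than repeating the full bilinear machinery: for $0<c<1/2$ the Cauchy--Schwarz step in $\tau$ indeed gives $\norm{f}{Y_2^l}\le C_c\,\norm{f}{X_2^{l,-c}}$, and all the numerical constraints of Lemma~\ref{estimate of the nonlinear terms 2} are met under the present hypotheses once $\bar{c_0}\le c$. This already covers every instance actually used in the proof of Theorem~\ref{Main result 1}: with the choices $c=\min(1-b,1/2)$ and $\bar{c_0}=\min(1-b,b_1,1/2)$ made there (and their analogues in the limiting case $k=l+1$, where $\bar{c_0}$ can be taken arbitrarily small), one always has $\bar{c_0}\le c$. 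So your first paragraph alone suffices for the paper's purposes.

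For the full range $c<\bar{c_0}\le\min(1/2,b_1)$ stated in the lemma, your sketch points in the right direction---dualize against $L^2_\xi L^\infty_\tau$, exploit the resonance relation $(\tau_1+|\xi_1|^2)-(\tau_1-\tau-|\xi-\xi_1|^2)-(\tau\pm|\xi|)=|\xi_1|^2-|\xi-\xi_1|^2\mp|\xi|$, and close with the $L^4_{t,x}$ Strichartz bound---but as written it is only a sketch, not a proof. The delicate point in \cite{MR1491547} is that, after placing the test function in $L^\infty_\tau$, one must handle the region where the output modulation $\bra{\tau\pm|\xi|}$ dominates by a careful $L^2_\tau$-integration against a characteristic function of an interval of controlled length (this is where the restriction $c<1/2$ enters and where the extra room up to $\bar{c_0}=1/2$ is gained). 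If you want the lemma in its full stated generality you should either reproduce that step or cite \cite{MR1491547} directly, as the paper does.
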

	\textbf{Proof of Theorem \ref{Main result 1}}
	\begin{proof}
		We  finish the proof of  Theorem \ref{Main result 1} by combining the estimates for the linear and nonlinear terms in Sections \ref{estimates for linear terms}, \ref{estimates for nonlinear terms} and the contraction argument described in  Section \ref{Linear estimate}.
		
		If $k < l+1$, by the Lemmas \ref{Lemma 2.1 in [1]},  \ref{C(R,H^s)},  \ref{estimates for I_1, I_2, I_3, I_4.} (i) and  \ref{estimates for I_5, I_6.} (i), it is sufficient to find $1/2 > c, c_1 \geq 0$ such that the estimates \eqref{nonlinear estimate 1}-\eqref{nonlinear estimate 4} hold. In the Lemmas \ref{estimate of the nonlinear terms 1},  \ref{estimate of the nonlinear terms 2},  \ref{estimate of the nonlinear terms 3} and  \ref{estimate of the nonlinear terms 4} we choose
		\begin{equation}
		\label{c and c_1}
		\begin{split}
		& c_1 = Min (1-b_1, 1/2), \qquad c = Min (1-b, 1/2), \\
		& c_0 = Min (b, 1-b_1, b_1), \qquad \bar{c_0} = Min (1-b, b_1, 1/2).
		\end{split}
		\end{equation}
		Then with $b, b_1$ close enough to $1/2$, the conditions in the Lemmas \ref{estimate of the nonlinear terms 1},  \ref{estimate of the nonlinear terms 2}, \ref{estimate of the nonlinear terms 3} and  \ref{estimate of the nonlinear terms 4} will be fulfilled.\\
		Remember that in this case, we only need the estimates \eqref{I_5}-\eqref{I_8}, \eqref{estimate of the nonlinear terms 3} and \eqref{estimate of the nonlinear terms 4} to get a solution $(u,n , \partial_t n ) \in \mathscr{C} (H^k \times H^l \times H^{l-1})$.
		
		If $k=l+1$, by the Lemmas \ref{Lemma 2.1 in [1]} and  \ref{estimates for I_5, I_6.} (ii), we are forced to take $b_1=1/2$ and $b>1/2$. For the estimates \eqref{I_1}-\eqref{I_4} we use the Lemma \ref{estimates for I_1, I_2, I_3, I_4.} (ii), for the estimates \eqref{I_5}-\eqref{I_6} we use the Lemma \ref{estimates for I_5, I_6.} (ii), for the estimates \eqref{nonlinear estimate 1}-\eqref{nonlinear estimate 2} we use the same choice of $(c,c_1, c_0, \bar{c_0})$ as in \eqref{c and c_1}, for \eqref{nonlinear esitmate 3} we choose $c_0=c_1=1/2$. If we choose $b$ close enough to $1/2$ then all the required estimates for linear and nonlinear terms hold.\\
		Furthermore, in order to extend the solution to $ \mathscr{C} (H^k \times H^l \times H^{l-1})$ we also need \eqref{I_7}, \eqref{I_8}  and \eqref{nonlinear estimate 4} given by the Lemmas \ref{estimates for I_5, I_6.} (ii) and \ref{estimate of the nonlinear terms 4}.
		
		Up to now we proved the well-posedness of the cut off integral equation \eqref{Z" integral eq_1}-\eqref{Z" integral eq_2}, for the proof of the independence of the cut off function we refer to \cite{MR1491547}.
	\end{proof}
	
	
	\section{Schochet-Weinstein method}
	\label{Schochet - Weinstein method}
	
	\subsection{Zakharov system as a dispersive perturbation of a symmetric hyperbolic system.}
	In this section, we will follow the method of Schochet and Weinstein (\cite{MR860310}) to rewrite Zakharov system \eqref{Zakharov_1}-\eqref{Zakharov_2} as a dispersive perturbation of a symmetric hyperbolic system.\\
	First, we define some auxiliary functions
	\begin{gather}
	\label{Vector valued funct V} V = - \frac{1}{\lambda} \Delta^{-1} \, \nabla ( n_t),\\
	\label{funct P} P = n + |u|^2,
	\end{gather}
	then \eqref{Zakharov_1}-\eqref{Zakharov_2} will become
	\begin{gather}
	\label{equation 1 - step 1} i u_t + \Delta u + |u|^2u - P u =0, \\
	\label{equation 2 - step 1} P_t + \lambda \nabla \cdot V - (|u|^2)_t =0, \\
	\label{equation 3 - step 1} V_t +  \lambda \nabla P =0.
	\end{gather}
	Multiplying \eqref{equation 1 - step 1} by $\bar{u}$ and taking the imaginary part of the resulting equation to get
	\begin{equation}
	\label{equation 1 - step 2} (|u|^2)_t = i (\bar{u} \Delta u - u \Delta \bar{u}).
	\end{equation}
	Next, we take the gradient of \eqref{equation 1 - step 1} and get
	\begin{equation}
	\label{equation 2 - step 2} 
	i \nabla u_t + \Delta \nabla u + |u|^2 \nabla u + ( u \nabla \bar{u} + \bar{u} \nabla u) u - P \nabla u - u \nabla P = 0.
	\end{equation}
	Now let $\sqrt{2} u = F + i G$ and $\sqrt{2} \nabla u = H + i L$. Then, the use of \eqref{equation 1 - step 2} in \eqref{equation 2 - step 1} leads to the following system equivalent to \eqref{equation 1 - step 1}-\eqref{equation 3 - step 1} , \eqref{equation 2 - step 2}
	\begin{gather}
	\label{equation 1 - step 3} P_t + \lambda \nabla \cdot V + F \nabla \cdot L - G \nabla \cdot H = 0,\\
	\label{equation 2 - step 3} V_t + \lambda \nabla P = 0, \\
	\label{equation 3 - step 3} F_t + \frac{1}{2} (F^2 + G^2) G - P G = - \Delta G, \\
	\label{equation 4 - step 3} G_t - \frac{1}{2} (F^2 + G^2) F + P F = \Delta F, \\
	\label{equation 5 - step 3} H_t - G \nabla P + \frac{1}{2} (F^2 + G^2) L + (FH + GL) G - P L  = - \Delta L, \\
	\label{equation 6 - step 3} L_t + F \nabla P - \frac{1}{2} (F^2 +G^2) H - (FH + GL) F + PH = \Delta H.
	\end{gather}
	Introducing the 9 - component vector function $U = (P,V,F,G,H,L)^T$, equations \eqref{equation 1 - step 3}-\eqref{equation 6 - step 3} can be rewritten as follows
	\begin{equation}
	\label{Symmetric system}
	U_t + \sum_{j=1}^2 (A^j(U) + \lambda C^j) U_{x_j} + B(U) U = K \Delta U,
	\end{equation}
	where $A^j$ and $C^j$ are symmetric $9 \times 9$ matrices , $K$ is a antisymmetric matrix. $C^j$ and $K$ are constant matrices. \\
	In the next section, we will use the above argument to rewrite \eqref{Z' system _1}-\eqref{Z' system_2} in a similar form.
	\subsection{Perturbed Zakharov system as a dispersive perturbation of a symmetric hyperbolic system}
	
	In Section \ref{Bourgain method}, we already considered the perturbation of \eqref{Zakharov_1}-\eqref{Zakharov_2} by $Q(x) = 2\sqrt{2} /(e^x + e^{-x})$ that is \eqref{Z' system _1}-\eqref{Z' system_2}. In this section, we will consider the perturbation of \eqref{Zakharov_1}-\eqref{Zakharov_2} by $e^{it} Q(x)$. This trick will make the calculation  simpler by using the previous calculation.\\
	Furthermore, in order to make computation transparent, we will denote \\ $(e^{it} Q(x), -|Q(x)|^2)$ by $(\phi, \phi_1)$ \footnote{\textit{These notations are not the same as in Section \ref{Bourgain method} where $\phi$ and $\phi_1$ denote something different.}} and change the notation of spatial variables from $(x,y)$ to $(x_1, x_2)$. Then, the perturbed system will have the form
	\begin{gather}
	\label{Z'_2 system equation 1} i \partial_t(u + \phi) + \Delta (u + \phi) = (n + \phi_1) (u + \phi), \\
	\label{Z'_2 system equation 2} \frac{1}{\lambda^2}\partial_t^2 (n+\phi_1) - \Delta (n + \phi_1) = \Delta (|u + \phi|^2).
	\end{gather}
	$(\phi, \phi_1)$ is a 1-D solution of \eqref{Zakharov_1} - \eqref{Zakharov_2}, then if we denote
	\begin{gather*}
	V_r = - \frac{1}{\lambda} \Delta^{-1} \nabla (\partial_t \phi_1) = - \frac{1}{\lambda} \Delta^{-1} (\partial_{x_1} \partial_t \phi_1, 0) , \\
	P_r = \phi_1 + |\phi|^2,
	\end{gather*}
	then $(\phi, Q_r, V_r)$ is also a solution of \eqref{equation 1 - step 1}-\eqref{equation 3 - step 1}.\\
	We denote that
	\begin{gather*}
	\tilde{V} = V + V_r = - \frac{1}{\lambda} \Delta^{-1} \nabla (\partial_t (n + \phi_1)), \\
	\tilde{P} = P + P_r = (n + \phi_1) + |u + \phi|^2 ,
	\end{gather*}
	then because of \eqref{Z'_2 system equation 1}-\eqref{Z'_2 system equation 2}, $(u+\phi, \tilde{P} , \tilde{V} )$ is a solution of \eqref{equation 1 - step 1}-\eqref{equation 3 - step 1}. \\
	Now applying the argument in the previous section, if we denote 
	\begin{gather*}
	\sqrt{2} \phi = F_r + i G_r, \quad \sqrt{2} \nabla \phi = H_r + i L_r, \\
	\sqrt{2} (u+\phi) = \tilde{F} + i \tilde{G} = F +F_r + i (G + G_r), \\
	\sqrt{2} \nabla (u + \phi) = \tilde{H} + \tilde{L} = H + H_r + i (L + L_r),
	\end{gather*}
	then $(P_r,V_r,F_r,G_r,H_r,L_r)^T$ and $(\tilde{P}, \tilde{V} , \tilde{F} , \tilde{G} , \tilde{H}, \tilde{L})^T$ are the solutions of \eqref{equation 1 - step 3}-\eqref{equation 6 - step 3}. Therefore, we can get a system solved by $(P, V, F, G, H, L)^T$ by eliminating the terms only depend on $(P_r,V_r,F_r,G_r,H_r,L_r)$, that is the following system
	\begin{gather}
	\label{equation 1 - step 4} P_t + \lambda  \nabla \cdot V + (F +F_r) \nabla \cdot L - (G+G_r) \nabla \cdot H + \mathcal{R}_1 = 0, \\
	\label{equation 2 - step 4} V_t + \lambda \nabla P  = 0, \\
	\label{equation 3 - step 4} F_t + \mathcal{R}_2 = - \Delta G, \\
	\label{equation 4 - step 4} G_t + \mathcal{R}_3 = \Delta F, \\
	\label{equation 5 - step 4} H_t - (G+G_r) \nabla P + \mathcal{R}_4 = - \Delta L, \\
	\label{equation 6 - step 4} L_t + (F+F_r) \nabla P + \mathcal{R}_5 = \Delta H.
	\end{gather}
	Where the residuals are
	\begin{gather*}
	\mathcal{R}_1 = F \nabla \cdot L_r - G \nabla \cdot H_r, \\
	\mathcal{R}_2 = \frac{1}{2} (\tilde{F}^2 + \tilde{G}^2 ) G + \frac{1}{2} (F^2 +G^2 + 2 F F_r + 2 G G_r) G_r - \tilde{P} G - P G_r, \\
	\mathcal{R}_3 = - \frac{1}{2} (\tilde{F}^2 + \tilde{G^2}) F - \frac{1}{2} (F^2 +G^2 + 2 F F_r + 2 G G_r) F_r + \tilde{P} F + P F_r, \\
	\begin{split} 
	\mathcal{R}_4   = & \frac{1}{2} (\tilde{F}^2 + \tilde{G}^2 ) L + \frac{1}{2} (F^2 +G^2 + 2F F_r + 2 G G_r) L_r \\
	&   + (\tilde{F} \tilde{H} + \tilde{G} \tilde{L}) G + (F H + GL + F H_r + F_r H + G L_r + G_r L) G_r \\
	&   -P L_r - P_r L - P L,
	\end{split}
	\\
	\begin{split}
	\mathcal{R}_5  = &  - \frac{1}{2} (\tilde{F}^2 + \tilde{G}^2) H - \frac{1}{2} (F^2 + G^2 + 2 F F_r + 2 G G_r) H_r \\
	& - (\tilde{F} \tilde{H} + \tilde{G} \tilde{L}) F - (FH + GL + F_r H + F H_r + G_r L + G L_r) F_r \\
	& + PH + P_r H + P H_r.
	\end{split}
	\end{gather*}
	Therefore, we now can rewrite \eqref{Z'_2 system equation 1}-\eqref{Z'_2 system equation 2} as a dispersive perturbation of a symmetric hyperbolic system 
	\begin{equation}
	\label{Symmetric system_2}
	U_t + \sum_{j=1}^2 (A^j(U) + B^j (\phi) + \lambda C^j) U_{x_j} +  (D^1(U) +D^2(\phi) ) U  = K \Delta U,
	\end{equation}
	where $U = (P,V,F,G,H,L)^T$, $A^j, B^j, C^j$ are symmetric $9 \times 9$ matrices, $K$ is an  anti-symmetric $9 \times 9$ matrix.\\
	\[
	A^1(U) = \begin{pmatrix}
	0 & 0_{1 \times 4} & - G & 0 & F & 0\\
	0_{4 \times 1} & 0_{4 \times 4} & 0_{4 \times 1} & 0_{4 \times 1} & 0_{4 \times 1} & 0_{4 \times 1} \\
	-G & 0_{1 \times 4} & 0_{1 \times 1} & 0_{1 \times 1} & 0_{1 \times 1} & 0_{1 \times 1} \\
	0 &  0_{1 \times 4} & 0_{1 \times 1} & 0_{1 \times 1} & 0_{1 \times 1} & 0_{1 \times 1} \\
	F & 0_{1 \times 4} & 0_{1 \times 1} & 0_{1 \times 1} & 0_{1 \times 1} & 0_{1 \times 1} \\
	0 & 0_{1 \times 4} & 0_{1 \times 1} & 0_{1 \times 1} & 0_{1 \times 1} & 0_{1 \times 1} \\
	\end{pmatrix},
	\]
	\[
	B^1(\phi) = \begin{pmatrix}
	0 & 0_{1 \times 4} & - G_r & 0 & F_r & 0\\
	0_{4 \times 1} & 0_{4 \times 4} & 0_{4 \times 1} & 0_{4 \times 1} & 0_{4 \times 1} & 0_{4 \times 1} \\
	-G_r & 0_{1 \times 4} & 0_{1 \times 1} & 0_{1 \times 1} & 0_{1 \times 1} & 0_{1 \times 1} \\
	0 &  0_{1 \times 4} & 0_{1 \times 1} & 0_{1 \times 1} & 0_{1 \times 1} & 0_{1 \times 1} \\
	F_r & 0_{1 \times 4} & 0_{1 \times 1} & 0_{1 \times 1} & 0_{1 \times 1} & 0_{1 \times 1} \\
	0 & 0_{1 \times 4} & 0_{1 \times 1} & 0_{1 \times 1} & 0_{1 \times 1} & 0_{1 \times 1} \\
	\end{pmatrix},
	\]
	\[
	A^2(U) = \begin{pmatrix}
	0 & 0_{1 \times 4} & 0 & - G & 0 & F\\
	0_{4 \times 1} & 0_{4 \times 4} & 0_{4 \times 1} & 0_{4 \times 1} & 0_{4 \times 1} & 0_{4 \times 1} \\
	0 & 0_{1 \times 4} & 0_{1 \times 1} & 0_{1 \times 1} & 0_{1 \times 1} & 0_{1 \times 1} \\
	- G &  0_{1 \times 4} & 0_{1 \times 1} & 0_{1 \times 1} & 0_{1 \times 1} & 0_{1 \times 1} \\
	0 & 0_{1 \times 4} & 0_{1 \times 1} & 0_{1 \times 1} & 0_{1 \times 1} & 0_{1 \times 1} \\
	F & 0_{1 \times 4} & 0_{1 \times 1} & 0_{1 \times 1} & 0_{1 \times 1} & 0_{1 \times 1} \\
	\end{pmatrix},
	\]
	\[
	B^2(\phi) = \begin{pmatrix}
	0 & 0_{1 \times 4} & 0 & - G_r & 0 & F_r\\
	0_{4 \times 1} & 0_{4 \times 4} & 0_{4 \times 1} & 0_{4 \times 1} & 0_{4 \times 1} & 0_{4 \times 1} \\
	0 & 0_{1 \times 4} & 0_{1 \times 1} & 0_{1 \times 1} & 0_{1 \times 1} & 0_{1 \times 1} \\
	- G_r &  0_{1 \times 4} & 0_{1 \times 1} & 0_{1 \times 1} & 0_{1 \times 1} & 0_{1 \times 1} \\
	0 & 0_{1 \times 4} & 0_{1 \times 1} & 0_{1 \times 1} & 0_{1 \times 1} & 0_{1 \times 1} \\
	F_r & 0_{1 \times 4} & 0_{1 \times 1} & 0_{1 \times 1} & 0_{1 \times 1} & 0_{1 \times 1} \\
	\end{pmatrix}.
	\]
	Now, instead of studying the Cauchy problem for the perturbed Zakharov system \eqref{Z'_2 system equation 1}-\eqref{Z'_2 system equation 2} we will study the Cauchy problem for \eqref{Symmetric system_2}.
	
	\begin{theorem}
		\label{local wellposedness for symmetric system}
		Let $s >2$ and $U_0 \in (H^s(\R^2))^9$, then there exists \\ $T= T(\norma{U_0}_{(H^s(\R^2))^9})$ such that the equation \eqref{Symmetric system_2} has a unique solution 
		\[
		U \in L^\infty([0,T], (H^s(\R^2))^9),
		\]
		with $U(0)=U_0$.
	\end{theorem}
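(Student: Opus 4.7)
The plan is to treat \eqref{Symmetric system_2} as a quasilinear symmetric hyperbolic system perturbed by the skew-adjoint dispersive operator $K\Delta$, and to prove local existence by the classical energy method of Kato--Taylor, adapted as in \cite{MR860310} to the presence of the background $\phi$. The first step is to regularize, for example by replacing each occurrence of $U$ in the quasilinear coefficients by $J_\varepsilon U$ (a Friedrichs mollifier) and possibly adding a vanishing parabolic term $\varepsilon\Delta U$. This produces a family $\{U^\varepsilon\}$ of smooth solutions on a short time interval by Picard iteration in $H^s$: for fixed $\varepsilon>0$ the right-hand side is Lipschitz on bounded sets of $H^s$.

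The central step is to derive $H^s$ energy estimates uniform in $\varepsilon$. Applying $\Lambda^s=\langle D\rangle^s$ to the equation and pairing with $\Lambda^s U^\varepsilon$ in $L^2$, the structural properties of the matrices do all the work: the constant symmetric $\lambda C^j$ contributes only a perfect divergence, hence integrates to zero; the antisymmetry of $K$ makes the dispersive contribution $\langle \Lambda^s U^\varepsilon, K\Lambda^s\Delta U^\varepsilon\rangle$ vanish; and the symmetric variable coefficients $A^j(U^\varepsilon)$ and $B^j(\phi)$ are handled via Kato--Ponce commutator estimates,
\begin{equation*}
\bigl\|[\Lambda^s, A^j(U^\varepsilon)]\partial_{x_j} U^\varepsilon\bigr\|_{L^2} \lesssim \|\nabla U^\varepsilon\|_{L^\infty}\|U^\varepsilon\|_{H^s}.
\end{equation*}
The condition $s>2$ is used precisely to guarantee $H^{s-1}(\R^2)\hookrightarrow L^\infty$ (and, equivalently, the algebra property of $H^s$), so that $\|\nabla U^\varepsilon\|_{L^\infty}\lesssim\|U^\varepsilon\|_{H^s}$ and the commutator bound closes. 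The zero-order residuals $\mathcal{R}_1,\ldots,\mathcal{R}_5$ are polynomial in $U^\varepsilon$ and $\phi$, hence controlled in $H^s$ by Moser tame estimates. Combining everything yields an inequality of the form
\begin{equation*}
\frac{d}{dt}\|U^\varepsilon(t)\|_{H^s}^2 \leq C_\phi\bigl(1+\|U^\varepsilon(t)\|_{H^s}^{p}\bigr),
\end{equation*}
with $C_\phi,p$ independent of $\varepsilon$, which produces a lifespan $T=T(\|U_0\|_{H^s})>0$.

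The third step is to pass to the limit: the uniform $L^\infty_tH^s$ bound, together with an $L^\infty_t H^{s-2}$ bound on $\partial_tU^\varepsilon$ read directly off the equation, gives by Aubin--Lions strong convergence of a subsequence in $C([0,T];H^{s'}_{\mathrm{loc}})$ for every $s'<s$. This is enough to pass to the limit in every term of \eqref{Symmetric system_2}, and a weak-$*$ compactness argument places the limit $U$ in $L^\infty([0,T],(H^s(\R^2))^9)$. Uniqueness follows from an $L^2$ energy estimate on the difference of two solutions issuing from the same datum, once again exploiting the symmetric-hyperbolic structure and the skew-adjointness of $K\Delta$.

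The main technical obstacle is that the background $\phi=e^{it}Q(x_1)$ is constant in the transverse variable $x_2$, so $B^j(\phi)$, $D^2(\phi)$ and the residuals $\mathcal{R}_j$ contain coefficients that do not belong to $H^s(\R^2)$. The saving observation is that $\phi$ and all the derived background quantities $F_r,G_r,H_r,L_r,P_r$ depend only on $(x_1,t)$ and are Schwartz in $x_1$, hence they lie in $W^{s,\infty}(\R^2)$ with all $x_1$-derivatives rapidly decaying; in particular $V_r=-\lambda^{-1}\Delta^{-1}\nabla(\partial_t\phi_1)\equiv 0$ since $\phi_1=-Q^2$ is time-independent. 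Multiplication by such $W^{s,\infty}$ coefficients preserves $H^s$ and is compatible with both Kato--Ponce and Moser estimates, so the $\phi$-induced terms only add a bounded linear perturbation and a bounded source to the energy inequality and do not affect the conclusion.
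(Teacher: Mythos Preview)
Your proposal is correct and follows the same broad architecture as the paper (regularization, $H^s$ energy estimate exploiting symmetry of $A^j,B^j,C^j$ and skew-adjointness of $K\Delta$, Aubin--Lions compactness, $L^2$ uniqueness), but the handling of the background $\phi$ is genuinely different. You apply the full two-dimensional multiplier $\Lambda^s=\langle D\rangle^s$ and appeal to a Kato--Ponce commutator estimate valid for $W^{s,\infty}$ coefficients to treat $[\Lambda^s,B^j(\phi)]$ and $[\Lambda^s,D^2(\phi)]$. The paper instead uses the equivalence $\|U\|_{H^s}\sim\|J_{x_1}^sU\|_{L^2}+\|J_{x_2}^sU\|_{L^2}$ and estimates the two pieces separately: for $J_{x_2}^s$ the commutators with $B^j(\phi)$ and $D^2(\phi)$ \emph{vanish identically} because $\phi$ is independent of $x_2$; for $J_{x_1}^s$ one applies the one-dimensional commutator estimate (Lemma from \cite{MR3060183}) in $x_1$ with $\phi\in H^s_{x_1}$, then integrates in $x_2$. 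The paper's anisotropic splitting is more elementary in that it only invokes the standard $H^s$-coefficient commutator bound, whereas your route is cleaner to write but requires a precise statement (and reference) for the $W^{s,\infty}$-coefficient version of Kato--Ponce, which you leave implicit. A minor further difference: the paper builds approximate solutions by a linear iteration scheme with mollified data $U_0^k=\mathcal{J}_{\varepsilon_k}U_0$, rather than mollifying the coefficients or adding artificial viscosity as you suggest; both devices are standard and lead to the same a priori bound.
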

	\begin{proof}
		The proof of the existence proceeds via a classical iteration scheme. We first regularize the initial data by taking a family of self-adjoint regularization operators $\mathcal{J}_{\epsilon}$ as following.\\
		We choose $j \in C_0^\infty (\R^2)$, $supp \, j \subset \{ X=(x_1,x_2) \in \R^2; |X|<1 \}$, $\int j =1$ and set $j_\epsilon (X)= \epsilon^{-2} j (X/\epsilon)$. Then, we define $\mathcal{J}_\epsilon u \in C^\infty (\R^2) \cap H^{s} (\R^2)$ by 
		\[
		\mathcal{J}_\epsilon u= j_\epsilon *u.
		\]
		Set $\epsilon_k = 2^{-k}$, $U_0^k = \mathcal{J}_{\epsilon_k} U_0$. We construct a local solution of \eqref{Symmetric system_2} by considering the iteration scheme :
		\begin{equation}
		\label{iteration 1} U^0(x_1,x_2,t) = U^0_0(x_1,x_2),
		\end{equation}
		\begin{equation}
		\label{iteration 2} U_t^{k+1} + \sum_{j=1}^2 (A^j(U^k) + B^j(\phi) + \lambda C^j ) U^{k+1}_{x_j} + (D^1(U^k) + D^2(\phi)) U^{k+1} = K \Delta U^{k+1} ,
		\end{equation}
		\begin{equation}
		\label{iteration 3} U^{k+1}(x_1,x_2,0) = U^{k+1}_0 (x_1,x_2).
		\end{equation}
		
		Since $\bra{\xi}^s \thicksim \bra{\xi_1}^s + \bra{\xi_2}^s$, $\forall s\geq 0, \, \xi=(\xi_1,\xi_2) \in \R^2$, we have $\norma{\cdot}_{H^s} \thicksim \norma{J^s_{x_1} \cdot}_{L^2} + \norma{J_{x_2}^s \cdot}_{L^2}$ , where $J_{x_1} = \mathcal{F}^{-1}\bra{\xi_1} \mathcal{F}, \, J_{x_2} = \mathcal{F}^{-1} \bra{\xi_2} \mathcal{F}$. \\
		Therefore, in order to estimate $\norma{U^{k+1}}_{H^s}$ we will estimate $\norma{J^s_{x_1} U^{k+1}}_{L^2}$ and $\norma{J^s_{x_2} U^{k+1}}_{L^2}$ separately.
		
		\textbf{Estimate for $\norma{J^s_{x_1} U^{k+1}}_{L^2}$}.\\
		Applying $J^s_{x_1}$ to (4.25) , we get
		\[
		\begin{split}
		& \partial_t (J^s_{x_1} U^{k+1})  + \sum_{j=1}^2 J^s_{x_1} \left( (A^j(U^k) +B^j(\phi)) U^{k+1}_{x_j}  \right) + \sum_{j=1}^2 \lambda C^j J^s_{x_1} U^{k+1}_{x_j} \\
		& \qquad + J^s_{x_1} \left( ( D^1(U^k) + D^2(\phi) ) U^{k+1}   \right) \\
		=&  K \Delta J^s_{x_1} U^{k+1}.
		\end{split}
		\]
		Or
		\[
		\begin{split}
		&  \partial_t (J^s_{x_1} U^{k+1}) + \sum_{j=1}^2 (A^j(U^k) +B^j(\phi)) J^s_{x_1} U^{k+1}_{x_j}  + \sum_{j=1}^2 \lambda C^j J^s_{x_1} U^{k+1}_{x_j} \\
		& \quad  + ( D^1(U^k) + D^2(\phi) ) J^s_{x_1} U^{k+1} + \sum_{j=1}^2 [J^s_{x_1}, A^j(U^k) +B^j(\phi) ] U_{x_j}^{k+1} \\
		& \quad + [J^s_{x_1}, D^1(U^k) + D^2(\phi)] U^{k+1} \\
		= & K \Delta J^s_{x_1}U^{k+1},
		\end{split}
		\]
		with notation for commutators $[F,G] =FG-GF$.
		
		We multiply (4.28) by $J^s_{x_1}U^{k+1}$ and integrating in $\R^2$, with integration by parts and using the symmetry of $A^j$ and $B^j$ we obtain
		\begin{equation}
		\label{integrate 1}
		\begin{split}
		& \frac{1}{2} \partial_t \norma{ J^s_{x_1} U^{k+1}}_{L^2}^2 + \sum_{j=1}^2 \bigg ( -\frac{1}{2} \bra{(A^j(U^k_{x_j}) +B^j(\phi_{x_j})) J^s_{x_1} U^{k+1}, J^s_{x_1} U^{k+1}}\\
		& \qquad  + \bra{[J^s_{x_1}, A^j(U^k) + B^j(\phi)] U_{x_j}^{k+1}, J^s_{x_1} U^{k+1}}   \bigg ) \\
		& \quad + \bra{(D^1(U^k)+ D^2(\phi)) J^s_{x_1} U^{k+1}, J^s_{x_1} U^{k+1}}\\
		& \quad  + \bra{[J_{x_1}^s, D^1(U^k) + D^2(\phi)] U^{k+1}, J^s_{x_1} U^{k+1}} \\
		= & 0.
		\end{split}
		\end{equation}
		Note that $\bra{\cdot, \cdot}$ denotes $L^2(\R^2)$ scalar product and $\bra{\cdot, \cdot}_1$ denotes $L^2(\R)$ scalar product with respect to $x_1$.
		
		Before going to estimate \eqref{integrate 1}, we need the following lemma (see \cite{MR3060183}. B2).
		\begin{definition}
			We say that a Fourier multiplier $\sigma(D)$ is of order $s \, (s\in \R)$ and write $\sigma \in \mathcal{S}^s$ if $\xi \in \R^d \mapsto \sigma(\xi) \in \C$ is smooth and satisfies
			\[
			\forall \xi \in \R^d, \, \forall \beta \in \N^d, \quad \sup_{\xi \in \R^d} \bra{\xi}^{|\beta| -s} |\partial^\beta \sigma(\xi)| < \infty .
			\]
		\end{definition}
		\begin{lemma}
			Let $t_0 > d/2, \, s \geq 0$ and $\sigma \in \mathcal{S}^s$. If $f \in H^s(\R^d) \cap H^{t_0+1} (\R^d)$ then, for all $g \in H^{s-1}(\R^d)  \cap H^{t_0}(\R^d)$ then
			\begin{equation}
			\label{commutator estimate}
			\norma{[\sigma(D), f] g}_{L^2} \leq C(\sigma) \left( \norma{\nabla f}_{L^\infty} \norma{g}_{H^{s-1}} + \norma{\nabla f}_{H^{s-1}} \norma{g}_{L^\infty}  \right),
			\end{equation}
			where $C(\sigma)$ depends only on $\sigma$.
		\end{lemma}
		
		First, using \eqref{commutator estimate} with $d=2, \, \sigma(D) = J^s_{x_1}$, Sobolev embedding inequality and note that $D^1(U)$ contains linear and quadratic  terms on $U$ and $A^j(U)$ depends linearly on $U$, we get
		\begin{equation}
		\label{est 1}
		\begin{aligned}
		& \sum_{j=1}^2 \bra{[J^s_{x_1}, A^j(U^k)] U^{k+1}_{x_j}, J^s_{x_1} U^{k+1}} + \bra{[J^s_{x_1}, D^1(U^k)]U^{k+1}, J^s_{x_1} U^{k+1}} \\
		\lesssim & \left( \norma{U^k}_{H^s} + \norma{U^k}_{H^s}^2    \right) \norma{ J^s_{x_1}U^{k+1}}_{L^2}^2 \\
		\lesssim & \left( \norma{U^k}_{H^s} + \norma{U^k}_{H^s}^2    \right) \norma{ U^{k+1}}_{H^s}^2
		\end{aligned}
		\end{equation}
		
		Now using \eqref{commutator estimate} with $d=1, \, \sigma(D)=J^s_{x_1}$ and Sobolev embedding inequality, we have
		\begin{equation}
		\label{est 2}
		\begin{aligned}
		& \sum_{j=1}^2 \bra{[J^s_{x_1}, B^j(\phi)] U^{k+1}_{x_j}, V^{k+1}}_1 + \bra{[J^s_{x_1}, D^2(\phi)]U^{k+1}, V^{k+1}}_1 \\ 
		\leq & C ( \norma{\phi}_{W^{s, \infty}}, \norma{\phi_1}_{W^{s,\infty}} ) \left( \norma{U^{k+1}_{x_2}}_{H^{s-1}_{x_1}} + \norma{U^{k+1}_{x_2}}_{L_{x_1}^\infty} + \norma{U^{k+1}}_{H^s_{x_1}}  \right) \\
		& \qquad  \norma{ J^s_{x_1}U^{k+1}}_{L^2_{x_1}} \\
		\leq & C ( \norma{\phi}_{W^{s, \infty}}, \norma{\phi_1}_{W^{s,\infty}} ) \left ( \norma{U^{k+1}_{x_2}}_{H^{s-1}_{x_1}} + \norma{ J^s_{x_1} U^{k+1}_{x_2}}_{L_{x_1}^2} + \norma{U^{k+1}}_{H^s_{x_1}}  \right) \\
		& \qquad \norma{ J^s_{x_1}U^{k+1}}_{L^2_{x_1}}.
		\end{aligned}
		\end{equation}
		
		Then, integrating both side of \eqref{est 2} in $\R$ with respect to $x_2$ we obtain
		\begin{equation}
		\label{est 3}
		\begin{aligned}
		& \sum_{j=1}^2 \bra{[J^s_{x_1}, B^j(\phi)] U^{k+1}_{x_j}, V^{k+1}} + \bra{[J^s_{x_1}, D^2(\phi)]U^{k+1}, V^{k+1}} \\ 
		\lesssim &  C ( \norma{\phi}_{W^{s, \infty}}, \norma{\phi_1}_{W^{s,\infty}} ) \norma{U^{k+1}}_{H^s}^2.
		\end{aligned}
		\end{equation}
		
		It is not hard to see that
		\begin{equation}
		\label{est 4}
		\begin{split}
		&  \sum_{j=1}^2  -\frac{1}{2} \bra{(A^j(U^k_{x_j}) +B^j(\phi_{x_j})) J^s_{x_1} U^{k+1}, J^s_{x_1} U^{k+1}}  \\
		\lesssim & \left( \norma{U^k}_{H^s} + \norma{\nabla \phi}_{W^{s, \infty}}  \right) \norma{U^{k+1}}_{H^s}^2.
		\end{split}
		\end{equation}
		
		Combining \eqref{integrate 1}, \eqref{est 1}, \eqref{est 3} and \eqref{est 4} we get
		\begin{equation}
		\label{est 5}
		\frac{1}{2} \partial_t \norma{J^s_{x_1} U^{k+1}}_{L^2}  \leq C \left(1+ \norma{U^k}_{H^s} + \norma{U^k}_{H^s}^2 \right) \norma{U^{k+1}}_{H^s}^2.
		\end{equation}
		
		\textbf{Estimate for $||J^s_{x_2} U^{k+1}||_{L^2}$}.
		
		We do similarly as previous step but note that $\phi$ does not depend on $x_2$ then 
		\[
		\begin{split}
		&  \partial_t (J^s_{x_2} U^{k+1}) + \sum_{j=1}^2 (A^j(U^k) +B^j(\phi)) J^s_{x_2} U^{k+1}_{x_j} + \sum_{j=1}^2 \lambda C^j J^s_{x_2} U^{k+1}_{x_j} \\
		& \quad +  \sum_{j=1}^2 [J^s_{x_2}, A^j(U^k) ] U_{x_j}^{k+1} + ( D^1(U^k) + D^2(\phi) ) J^s_{x_2} U^{k+1} \\
		& \quad + [J^s_{x_2}, D^1(U^k) ] U^{k+1}\\
		= &  K \Delta J^s_{x_2}U^{k+1},
		\end{split}
		\]
		Multiplying the above expression with $J^s_{x_2} U^{k+1}$, integrating in $\R^2$, using integral by parts and the symmetry of $A^j, \,B^j \,( j=1,2)$ . We get
		\begin{equation}
		\label{integrate 2}
		\begin{split}
		& \frac{1}{2} \partial_t \norma{ J^s_{x_2} U^{k+1}}_{L^2}^2  + \sum_{j=1}^2 \bigg ( -\frac{1}{2} \bra{ ( A^j(U^k_{x_j}) + B^j(\phi_{x_j}))  J^s_{x_2} U^{k+1}, J^s_{x_2} U^{k+1}}     \\
		& \qquad + \bra{[J^s_{x_2}, A^j(U^k) ] U_{x_j}^{k+1}, J^s_{x_2} U^{k+1}} \bigg ) \\
		& + \bra{(D^1(U^k)+ D^2(\phi)) J^s_{x_2} U^{k+1}, J^s_{x_2} U^{k+1}} \\
		& + \bra{[J_{x_2}^s, D^1(U^k) ] U^{k+1}, J^s_{x_1} U^{k+1}} \\
		= & 0.
		\end{split}
		\end{equation}
		
		We can see that there is no ``$\phi$'' term in the commutator part of \eqref{integrate 2}, that make it is easier to estimate than \eqref{integrate 1}. Indeed, we also have
		\begin{equation}
		\label{est 6}
		\frac{1}{2} \partial_t \norma{J^s_{x_2} U^{k+1}}_{L^2}  \leq C (1+ \norma{U^k}_{H^s} + \norma{U^k}_{H^s}^2) \norma{U^{k+1}}_{H^s}^2.
		\end{equation}
		
		Thanks to \eqref{est 5} and \eqref{est 6} we have
		\begin{equation}
		\label{Gronwall inequality}
		\begin{split}
		\frac{1}{2}  \norma{U^{k+1}}_{H^s}^2 \leq C \left( 1 + \norma{U^k}_{H^s} + \norma{U^k}_{H^s}^2   \right) \norma{U^{k+1}}_{H^s}^2.
		\end{split}
		\end{equation}
		
		We now consider a closed ball in $( H^s(\R^2) )^9$
		\[
		\mathcal{B} = \{  U \in ( H^s(\R^2) )^9: \norma{U}^2_{H^s} \leq \norma{U_0}^2 +1  \}.
		\]
		Therefore by combining \eqref{Gronwall inequality}, Gronwall inequality and induction method we can prove that 
		\begin{equation}
		\label{uniform bound 1} 
		\norma{U^k(t)}_{H^s}^2 \leq \norma{U_0}_{H^s}^2 + 1,
		\end{equation}
		for $t \in [0,T]$ and $T = T (\norma{U_0}_{H^s})$.\\
		Equation \eqref{iteration 2} then give us the uniform bound on $U^k_t$
		\[
		\norma{U^k_t}_{L^\infty([0,T]; (H^{s-2})^9)} \leq C(\norma{U_0}_{H^s}).
		\]
		By the Aubin-Lions theorem (Lemma \ref{Aubin Lions theorem}), there is a subsequence of $\{ U^{k_n} \}$ (with the same notation) converging strongly in $L^\infty ([0,T]; (H^{s-\delta}_{loc})^9)$ with $0<\delta < 2$. This allows to take the limit in the nonlinear terms of $\eqref{Symmetric system_2}$.
		
		The uniqueness of solution of \eqref{Symmetric system_2} is done by using energy method for the difference of two solutions since the dispersive part contributes nothing to the energy.
	\end{proof}
	\textbf{Proof of Theorem 2}.
	\begin{proof}
		Theorem \ref{Main result 2} is a consequence of Theorem \ref{local wellposedness for symmetric system} as follows:
		
		First, set $u=(1/\sqrt{2} (F+iG))$ then \eqref{equation 2 - step 4}-\eqref{equation 4 - step 4} implies
		\begin{equation}
		\label{thm3 to thm 2 - 1}
		i \partial_t(u+\phi) + \Delta (u+\phi) + |u+\phi|^2(u+\phi) - \tilde{P}(u+\phi) =0,
		\end{equation}
		and
		\begin{equation}
		\label{thm3 to thm 2 -2}
		\tilde{V}_t  + \lambda \nabla \tilde{P} = 0.
		\end{equation}
		Where $\tilde{P}=P+P_r$ and $\tilde{V} = V + V_r$. 
		
		Next, From \eqref{equation 3 - step 4}-\eqref{equation 5 - step 4} we can derive an $L^2$ energy estimate of $W =(\nabla F - H, \nabla G - L)$, that implies
		\[
		\norma{W(t)}_{L^2}^2 \leq e^{CT} \norma{W(0)}_{L^2}^2 =0.
		\]
		Therefore, $\nabla u = \frac{1}{\sqrt{2}} (H+iL)$. it  leads to
		\[
		(|u+\phi|^2)_t = i ((\overline{u+\phi}) \Delta (u+\phi) - (u+\phi) \Delta (\overline{u+\phi})),
		\]
		and
		\[
		\tilde{P}_t + \lambda \nabla \cdot \tilde{V} - (|u + \phi|^2)_t = 0.
		\]
		
		Set $\tilde{n} = \tilde{P} - |u+\phi|^2$ we have that
		\begin{equation}
		\label{thm3 to thm 2 -3}
		\tilde{n}_t + \lambda \nabla \cdot \tilde{V} = 0.
		\end{equation}
		Combining \eqref{thm3 to thm 2 -2} and \eqref{thm3 to thm 2 -3}, we obtain \eqref{Z'_2 system equation 2}, it is also clear that \eqref{thm3 to thm 2 - 1} implies \eqref{Z'_2 system equation 1}. Estimate \eqref{uniform estimate} follows similarly.
	\end{proof}
	
	\section{A weak convergence result}
	\label{a weak convergence result}
	We consider the following family of system labeled by the parameter $\lambda$
	\begin{gather}
	\label{u lambda} i \partial_tu_\lambda - u_\lambda + \Delta u_\lambda = n_\lambda u_\lambda - Q^2 u_\lambda + Q n_\lambda \\
	\label{n lambda} \frac{1}{\lambda^2} \partial_t^2 n_\lambda - \Delta n_\lambda = \Delta (|u_\lambda|^2) + 2 \Delta (Q Re(u_\lambda))\\
	\label{f lambda} \partial_t n_\lambda + \nabla \cdot f_\lambda = 0,
	\end{gather}
	with initial data given by
	\begin{equation}
	\label{initial data lambda system}
	u_\lambda (0) = u_0, \quad n_\lambda (0) = n_0, \quad f_\lambda(0) = f_0.
	\end{equation}
	We study the behaviour of the solution $(u_\lambda, n_\lambda)$ when $\lambda$ tends to $\infty$, that is the theorem
	\begin{theorem}
		\label{weak convergence theorem}
		Let $s>2$ and  $(u_\lambda, n_\lambda, f_\lambda) \in L^\infty (0,T,H^{s+1}) \times L^\infty (0,T,H^s) \times L^\infty (0,T, H^s)$ be the local solution of the system \eqref{u lambda}-\eqref{f lambda} given by the Theorem \ref{Main result 2} with $t \in [0,T]$ and initial data \eqref{initial data lambda system}. Then as $\lambda \rightarrow \infty$, $(u_\lambda, n_\lambda)$ converges to $(u, -|u|^2)$ in $L^\infty(0,T, H^1) \times L^\infty(0,T, L^2)$ weak star, where $u$ is the unique solution of the perturbed nonlinear Schr\"odinger equaion \eqref{PNLS} with initial data $u_0 \in H^s$.
	\end{theorem}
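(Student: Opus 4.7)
The plan is to pass to the weak-$*$ limit as $\lambda \to \infty$ using the $\lambda$-uniform bound \eqref{uniform estimate} of Theorem \ref{Main result 2} together with an Aubin-Lions compactness argument for the nonlinear terms. By \eqref{uniform estimate}, $u_\lambda$ is uniformly bounded in $L^\infty(0,T;H^{s+1})$, $\partial_t u_\lambda$ in $L^\infty(0,T;H^{s-1})$, $n_\lambda$ in $L^\infty(0,T;H^s)$, and $\lambda^{-2}\partial_t^2 n_\lambda$ in $L^\infty(0,T;H^{s-2})$. Applying Banach-Alaoglu I extract a (non-relabeled) subsequence along which $u_\lambda \rightharpoonup u_*$ in $L^\infty H^{s+1}$ weak-$*$ and $n_\lambda \rightharpoonup n_*$ in $L^\infty H^s$ weak-$*$, and the bound on $\partial_t u_\lambda$ combined with Aubin-Lions upgrades $u_\lambda \to u_*$ to strong convergence in $C([0,T]; H^r_{\mathrm{loc}})$ for every $r<s+1$.

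The crucial step is the passage to the limit in the wave equation \eqref{n lambda}. Testing against $\varphi \in C^\infty_c((0,T)\times\R^2)$ and integrating by parts twice in time, the term $\lambda^{-2}\iint n_\lambda\,\partial_t^2\varphi$ vanishes in the limit since $n_\lambda$ is uniformly bounded in $L^\infty L^2$. Using the strong local convergence of $u_\lambda$ to identify $|u_\lambda|^2 \rightharpoonup |u_*|^2$ and $Q\,\mathrm{Re}(u_\lambda)\rightharpoonup Q\,\mathrm{Re}(u_*)$, one obtains the distributional elliptic identity
\begin{equation*}
\Delta\bigl(n_* + |u_*|^2 + 2 Q\,\mathrm{Re}(u_*)\bigr) = 0.
\end{equation*}
Each summand belongs to $L^\infty(0,T;L^2(\R^2))$, thanks to the $x$-decay of $Q$ and the regularity $u_* \in H^{s+1}$; a harmonic-$L^2$ Liouville argument on $\R^2$ then determines $n_*$ uniquely. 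Substituting this identification into \eqref{u lambda} (pairing the weak-$*$ convergence of $n_\lambda$ with the strong local convergence of $u_\lambda$ to handle the product $n_\lambda u_\lambda \rightharpoonup n_* u_*$) and using the ODE identity $Q_{xx} = Q - Q^3$, the limit $u_*$ satisfies PNLS \eqref{PNLS} with initial datum $u_0$. Uniqueness for PNLS pins down $u_* = u$ and forces the entire family, not just a subsequence, to converge to the asserted weak-$*$ limit.

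The main obstacle is passing to the limit in the quadratic product $n_\lambda u_\lambda$ inside the Schrödinger equation: both factors converge only weakly in their natural spaces, so the product identification relies on the stronger convergence of $u_\lambda$ in a local Sobolev space of regularity above $1$, afforded by Aubin-Lions compactness. A secondary subtlety is that $Q$ does not decay in $y$: the products $Q\,n_\lambda$ and $Q\,\mathrm{Re}(u_\lambda)$ must be analyzed using $Q \in L^\infty_y L^2_x$, and the harmonic-$L^2$ argument requires the relevant combination to still belong to $L^\infty L^2(\R^2)$, which is ensured by the $x$-decay of $Q$ combined with the $H^{s+1}$ regularity of $u_*$.
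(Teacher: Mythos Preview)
Your proposal is correct and follows essentially the same compactness scheme as the paper: uniform bounds from Theorem~\ref{Main result 2}, Aubin--Lions to upgrade $u_\lambda$ to strong local convergence, a Liouville-type argument on the wave part to identify $n_*=-|u_*|^2-2Q\,\mathrm{Re}(u_*)$, and then the weak--strong product argument for $n_\lambda u_\lambda$ in the Schr\"odinger equation. The only cosmetic differences are that the paper works at the $H^1\times L^2$ level and uses the auxiliary first-order variable $f_\lambda$ to reach $\nabla\bigl(n+|u|^2+2Q\,\mathrm{Re}(u)\bigr)=0$, whereas you test the second-order wave equation directly to obtain $\Delta(\cdot)=0$; you also make explicit the subsequence-to-full-family step via uniqueness for \eqref{PNLS}, which the paper leaves implicit.
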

	We use a classical compactness method and we will follow the ideas of H. Added and S. Added in \cite{1AD_AD}. Before giving the proof of theorem \ref{weak convergence theorem} we want to recall the two following lemma in \cite{JLions}.
	\begin{lemma} (Aubin-Lions's theorem)
		\label{Aubin Lions theorem}
		Let $B_0, B, B_1$ be three reflexive Banach spaces. Assume that $B_0$ is compactly embedded in $B$ and $B$ is continuously embedded in $B_1$ . Let
		\[
		W = \{ V \in L^{p_0}(0,T,B_0), \, \frac{\partial V}{\partial t} \in L^{p_1} (0,T,B_1) \}, \quad T < \infty, \; 1 < p_0,p_1 < \infty .
		\]
		$W$ is a Banach space with norm
		\[
		\norma{V}_W = \norma{V}_{L^{p_0} (0,T,B_0)} + \norma{V_t}_{L^{p_1}(0,T,B_1)}.
		\]
		Then the embedding $W \hookrightarrow L^{p_0} (0,T,B)$ is compact.
		
		When $p_0=\infty, \, p_1>1$, the above statement is also true, see \cite{S}.
	\end{lemma}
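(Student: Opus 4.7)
The plan is to combine an interpolation (Ehrling) inequality with a uniform time-equicontinuity estimate coming from the bound on $\partial_t V$. First I would establish Ehrling's lemma: since $B_0\hookrightarrow B$ is compact and $B\hookrightarrow B_1$ is continuous, for every $\eta>0$ there exists $C_\eta>0$ such that
\[
\norma{u}_B \leq \eta \norma{u}_{B_0} + C_\eta \norma{u}_{B_1}, \qquad \forall\, u \in B_0.
\]
The proof is a standard contradiction argument: a sequence $u_n$ violating the inequality with $\norma{u_n}_{B_0}=1$ would, by the compact embedding $B_0\hookrightarrow B$, admit a subsequence converging in $B$ to some nonzero limit; but along this subsequence $\norma{u_n}_{B_1}\to 0$, which by injectivity of $B\hookrightarrow B_1$ forces the limit to vanish, a contradiction.

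Next I would take a bounded sequence $(V_n)\subset W$ and, using reflexivity, extract a subsequence (still denoted $V_n$) with $V_n\rightharpoonup V$ weakly in $L^{p_0}(0,T;B_0)$ and $\partial_t V_n\rightharpoonup \partial_t V$ weakly in $L^{p_1}(0,T;B_1)$. Applying Ehrling pointwise to $V_n-V$ and integrating in time gives
\[
\norma{V_n-V}_{L^{p_0}(0,T;B)} \lesssim \eta\,\norma{V_n-V}_{L^{p_0}(0,T;B_0)} + C_\eta\,\norma{V_n-V}_{L^{p_0}(0,T;B_1)}.
\]
The first factor is uniformly bounded and $\eta>0$ is free, so the entire problem reduces to establishing strong convergence $V_n\to V$ in the weaker space $L^{p_0}(0,T;B_1)$.

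For that reduction I would apply the Fr\'echet--Kolmogorov compactness criterion in $L^{p_0}(0,T;B_1)$. The two required ingredients are both present: the bound $\partial_t V_n\in L^{p_1}(0,T;B_1)$ uniformly gives the time-translation estimate
\[
\norma{V_n(\cdot+h)-V_n(\cdot)}_{L^{p_0}(0,T-h;B_1)}\lesssim h^{1-1/p_1}\ \longrightarrow\ 0 \quad\text{as } h\to 0,
\]
while the uniform $L^{p_0}(0,T;B_0)$ bound combined with the compact embedding $B_0\hookrightarrow B\hookrightarrow B_1$ supplies the required pointwise relative compactness in $B_1$. These together furnish a strongly convergent subsequence in $L^{p_0}(0,T;B_1)$, whose limit is identified with $V$ by uniqueness of the weak limit.

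The main obstacle is bridging the gap between pointwise information (needed for compactness in $B_1$ at each time) and the integral-in-time hypotheses. This is circumvented by the continuous representative of $V_n$ in $B_1$ (obtained from $\partial_t V_n\in L^1(0,T;B_1)$, which makes $V_n$ absolutely continuous into $B_1$) together with the time averaging built into the Fr\'echet--Kolmogorov criterion. The $p_0=\infty$ extension stated at the end of the lemma follows by the same argument, as carried out in \cite{S}.
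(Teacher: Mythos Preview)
The paper does not prove this lemma at all: it is merely recalled from \cite{JLions} (with the $p_0=\infty$ extension attributed to \cite{S}), so there is nothing to compare against. Your outline follows the standard route found in those references---Ehrling's inequality to trade compactness in $B$ for strong convergence in $B_1$, then a time-equicontinuity argument from the uniform $L^{p_1}$ bound on $\partial_t V_n$---and is essentially correct as a sketch; the only place where more care is needed is the ``pointwise relative compactness in $B_1$'' step, which in Simon's treatment is handled via time-averages rather than a direct vector-valued Fr\'echet--Kolmogorov, but you already flag this as the delicate point.
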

	\begin{lemma}
		\label{lemma almost everywhere limit}
		Let $\Omega$ be an open set of $\R^n$ and let $g, g_\varepsilon \in L^p(\R^n)$, $1< p < \infty$, such that 
		\[
		g_\varepsilon \rightarrow g \; \text{ a.e in }  \Omega \text{ and } \norma{g_\varepsilon}_{L^p(\Omega)} \leq C .
		\]
		Then $g_\varepsilon \rightarrow g$ weakly in $L^p(\Omega)$.
	\end{lemma}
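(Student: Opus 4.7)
The plan is to combine the reflexivity of $L^p(\Omega)$ for $1 < p < \infty$ with an Egorov-type argument to pin down the weak limit. Since $\{g_\varepsilon\}$ is bounded in the reflexive space $L^p(\Omega)$, the Banach--Alaoglu--Kakutani theorem will yield a subsequence $g_{\varepsilon_k} \rightharpoonup h$ in $L^p(\Omega)$ for some $h \in L^p(\Omega)$, and it will suffice to show $h = g$ almost everywhere on $\Omega$.

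To identify the weak limit, I would test against characteristic functions of sets of finite measure. For any measurable $A \subset \Omega$ with $\mu(A) < \infty$, the function $\chi_A$ lies in $L^q(\Omega)$, where $q$ is the H\"older conjugate of $p$, so weak convergence gives $\int_A g_{\varepsilon_k} \to \int_A h$. On the other hand, by Egorov's theorem, for each $\delta > 0$ there is a subset $A_\delta \subset A$ with $\mu(A \setminus A_\delta) < \delta$ on which $g_{\varepsilon_k} \to g$ uniformly; combining the uniform convergence on $A_\delta$ with H\"older's inequality on $A \setminus A_\delta$ (which yields $\int_{A \setminus A_\delta} |g_{\varepsilon_k}| \leq C\, \mu(A \setminus A_\delta)^{1/q}$ uniformly in $k$ by the assumed $L^p$ bound, and similarly for $g$) produces $\int_A g_{\varepsilon_k} \to \int_A g$. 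Uniqueness of the limit in $\mathbb{R}$ then forces $\int_A h = \int_A g$ for every $A$ of finite measure, so that $h = g$ a.e.\ on $\Omega$.

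A standard Urysohn (subsequence-of-a-subsequence) argument then upgrades this to weak convergence of the full family: every subsequence of $\{g_\varepsilon\}$ still satisfies both hypotheses of the lemma, so by the previous two steps it admits a further sub-subsequence weakly convergent to $g$; since any weak limit is thereby forced to coincide with $g$, the full family $g_\varepsilon$ must converge weakly to $g$ in $L^p(\Omega)$.

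The main obstacle is the identification step, because one cannot naively pass to the limit inside $\int g_{\varepsilon_k}\,\phi$ using pointwise convergence and dominated convergence — no integrable dominant is provided. The $L^p$ bound must instead be exploited via an Egorov or uniform-integrability device to convert pointwise information into integral information against the $L^q$ test functions.
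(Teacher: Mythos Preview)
Your argument is correct and is one of the standard proofs of this classical lemma. The paper, however, does not supply its own proof: the lemma is merely recalled from Lions' book \cite{JLions} (see the sentence preceding Lemma~\ref{Aubin Lions theorem}), so there is nothing in the paper to compare your approach against beyond the bare statement. Your route---extract a weakly convergent subsequence by reflexivity, identify the limit via Egorov and H\"older on sets of finite measure, then upgrade to the full family by the sub-subsequence criterion---is exactly the textbook argument one would expect, and each step is sound as written.
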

	
	\textbf{Proof of Theorem \ref{weak convergence theorem}}
	\begin{proof}
		(i) By using Theorem \ref{Main result 2}, we have that the quantities $\norma{u_\lambda}_{L^\infty(0,T, H^1)}$, \\ $\norma{n_\lambda}_{L^\infty(0,T, L^2)}$ and $\norma{(1/\lambda) f_\lambda}_{L^\infty(0,T, L^2 )}$ are bounded uniformly in $\lambda$. So, some subsequence of $(u_\lambda, n_\lambda, (1/\lambda) f_\lambda)$ (still denoted by $\lambda$) has a weak limit $(u, n, w)$. More precisely
		\begin{gather}
		\label{u lambda w.s} u_\lambda \rightarrow u \text{ in } L^\infty(0,T,H^1) \text{ weak star  (w.s.)}, \\
		\label{n lambda w.s} n_\lambda \rightarrow n \text{ in } L^\infty(0,T,L^2) \text{ w.s. }
		\intertext{and}
		\label{f lambda w.s} (1/\lambda) f_\lambda \rightarrow w \text{ in } L^\infty(0,T,L^2) \text{ w.s. }.
		\end{gather}
		Let us note that the following maps are continuous 
		\begin{equation}
		\label{H1 L4}
		\begin{split}
		H^1(\R^2) & \rightarrow L^4(\R^2) \\
		u & \mapsto u,
		\end{split}
		\end{equation}
		\begin{equation}
		\label{H1 x L2 to H -1}
		\begin{split}
		H^1(\R^2) \times L^2(\R^2) & \rightarrow H^{-1}(\R^2)\\
		(u,v) & \mapsto uv,
		\end{split}
		\end{equation}
		\begin{equation}
		\label{Q^2 u}
		\begin{split}
		H^1(\R^2) & \rightarrow H^{-1}(\R) \\
		u & \mapsto Q^2 u,
		\end{split}
		\end{equation}
		\begin{equation}
		\label{Q Re u}
		\begin{split}
		H^1(\R^2) & \rightarrow L^2(\R^2) \\
		u & \mapsto Q u
		\end{split}
		\end{equation}
		and 
		\begin{equation}
		\label{Q n}
		\begin{split}
		L^2(\R^2) & \rightarrow H^{-1}(\R^2) \\
		v & \mapsto Q v.
		\end{split}
		\end{equation}
		Since $Q$ and $\nabla Q$ are bounded.
		
		It then follows from \eqref{u lambda w.s}-\eqref{Q n} that the quantities \\ $\norma{Q^2 u_\lambda}_{L^\infty(0,T, H^{-1})}$, $\norma{Q n_\lambda}_{L^\infty(0,T,H^{-1})}$, $\norma{\Delta u_\lambda}_{L^\infty(0,T,H^{-1})}$,  $\norma{|u_\lambda|^2}_{L^\infty(0,T, L^2)}$,  $\norma{Q Re(u_\lambda)}_{L^\infty(0,T,L^2)}$ and $\norma{n_\lambda u_\lambda}_{L^\infty(0,T,H^{-1})}$ are bounded uniformly in $\lambda$. So it can be assumed that
		\begin{gather}
		\label{n lambda u lambda w.s}  n_\lambda u_\lambda \rightarrow g \text{ in } L^\infty (0,T,H^{-1}) \text{ w.s.}, \\
		\label{u lambda ^2 w.s} |u_\lambda|^2 \rightarrow h \text{ in } L^\infty (0,T,L^2) \text{ w.s.}, \\
		\label{Q Re u lambda w.s} Q Re(u_\lambda) \rightarrow Q Re (u) \text{ in } L^{\infty}(0,T, L^2) \text{ w.s.}, \\
		\label{Q^2 u lambda w.s} Q^2 u_\lambda \rightarrow Q^2 u \text{ in } L^\infty(0,T,H^{-1}) \text{ w.s.}, \\
		\label{Q n lambda w.s} Q n_{\lambda} \rightarrow Q n \text{ in } L^{\infty} (0,T, H^{-1}) \text{ w.s.}, \\
		\intertext{and}
		\label{Delta u lambda } \Delta u_\lambda \rightarrow \Delta u \text{ in } L^{\infty} (0,T, H^{-1}) \text{ w.s.}.
		\end{gather}
		Finally, taking into account \eqref{u lambda w.s}, \eqref{n lambda u lambda w.s}, \eqref{Q^2 u lambda w.s}, \eqref{Q n lambda w.s} and \eqref{Delta u lambda }, equation \eqref{u lambda} implies that
		\begin{equation}
		\label{u_t lamda w.s}
		\partial_t u_\lambda \rightarrow \partial_t u \text{ in } L^\infty(0,T,H^{-1}) \text{ w.s.}.
		\end{equation}
		Using the above results, this proof will be complete if we establish that
		\begin{equation}
		\label{n + u^2 + Q Re u}
		n + |u|^2 + 2 Q Re(u) =0 ,
		\end{equation}
		and 
		\begin{equation}
		\label{g + u|u|^2}
		g + u|u|^2 + 2 Q u Re (u) =0.
		\end{equation}
		
		(ii) \textit{Proof of \eqref{n + u^2 + Q Re u} }. Let $\Omega$ be any bounded subdomain of $\R^2$. We notice that the embedding $H^1(\Omega) \hookrightarrow L^4(\Omega)$ is compact, and for any Banach space $X$,
		\begin{equation}
		\label{L infty L2}
		\text{ the embedding } L^\infty(0,T,X) \hookrightarrow L^2(0,T,X) \text{ is continuous} .
		\end{equation}
		Hence, according to \eqref{L infty L2}, \eqref{u lambda w.s}, \eqref{u_t lamda w.s}  and applying Lemma \ref{Aubin Lions theorem} to $B_0=H^1(\Omega)$, $B = L^4(\Omega)$, $B_1= H^{-1}(\Omega), \, p_0=p_1=2$, it implies that some subsequence of $u_\lambda$ (on $\Omega$) (also denoted by $\lambda$) converges strongly to $u$ on the domain $\Omega$ in $L^2(0,T,L^4(\Omega))$. So we can assume that
		\begin{equation}
		\label{u lambda strong limit}
		u_\lambda \rightarrow u \text{ strongly in } L^2(0,T,L^4_{loc}(\R^2)),
		\end{equation}
		and thus,
		\begin{equation}
		\label{u lambda almost everywhere}
		u_\lambda \rightarrow u \text{ almost everywhere in } (t,x,y) \in (0,T, \R^2).
		\end{equation}
		Then, using Lemma \ref{lemma almost everywhere limit}, \eqref{u lambda ^2 w.s} and \eqref{u lambda almost everywhere} implies that $h=|u|^2$.
		
		Taking into account that $\norma{(1/\lambda) f_\lambda}_{L^\infty(0,T,L^2)}$ is bounded uniformly in $\lambda$, equations \eqref{n lambda} and \eqref{f lambda} follow that
		\[
		\nabla(n_\lambda + |u_\lambda|^2 + 2 Q Re(u_\lambda)) \rightarrow 0 \text{ in } \mathscr{D}'(0,T, L^2).
		\]
		So $\nabla (n + |u|^2 + 2 Q Re(u)) = 0$. Since $n + |u|^2 + 2 Q Re(u) \in L^{\infty}(0,T,L^2)$, it follows that
		\[
		n + |u|^2 + 2 Q Re(u) = 0.
		\]
		
		(iii) \textit{Proof of \eqref{g + u|u|^2}}. We shall prove that $n_\lambda u_{\lambda} \rightarrow - u|u|^2 - 2 Q u Re (u) $ in $L^2(0,T, H^{-1})$ w.s, which combined with \eqref{n lambda u lambda w.s} and \eqref{L infty L2} will ensure that \\ $g = - u|u|^2 - 2 Q u Re (u)$. First, let $\psi$ be some test function in $L^2(0,T,H^1)$ vanishing out of a compact set $\Omega \in \R^2$. Then
		\[
		\begin{split}
		& \int_0^T \int_{\R^2} (n_\lambda u_\lambda + u|u|^2 + 2 Q u Re (u)) \, \psi \, dx \, dt \\
		& \quad = \int_0^T \int_{\Omega} n_\lambda (u_\lambda - u) \, \psi \, dx \, dt + \int_0^T \int_{\Omega} (n_\lambda + |u|^2 + 2 Q Re(u)) \,  u \, \psi \, dx \, dt .
		\end{split}
		\]
		Let $I^1_\lambda (\psi)$ and $I^2_\lambda (\psi)$ denote the first and the second integral on the right side respectively. Then
		\[
		|I^1_\lambda(\psi)| \leq \norma{n_\lambda}_{L^\infty(0,T,L^2)} \, \norma{\psi}_{L^2(0,T,L^4(\Omega))} \, \norma{u_\lambda - u}_{L^2(0,T,L^4(\Omega))} .
		\]
		Since $\Omega$ is bounded we deduce from \eqref{n lambda w.s} and \eqref{u lambda strong limit} that $I^1_\lambda(\psi)$ goes to 0 when $\lambda$ tends to $\infty$.\\
		Using Sobolev embedding theorem and H\"older inequality, we see that \\ $\psi u \in L^1(0,T,L^2)$. Therefore, using that $n_\lambda \rightarrow -|u|^2 - 2 Q Re(u)$ \\ in $L^\infty(0,T,L^2)$ w.s., it follows that $I^2_\lambda (\psi)$ goes to 0 when $\lambda$ tends to $\infty$.\\
		Hence,
		\[
		\int_0^T \int_{\R^2} (n_\lambda u_\lambda + u|u|^2 + 2 Q u Re (u)) \, \psi \, dx \, dt  \rightarrow 0
		\]
		for every test function $\psi$ then the proof is complete. 
	\end{proof}
	\textbf{Conclusions}:\\
	1) We proved the  local well-posedness  in the usual Sobolev spaces for the 2-d Zakharov system  perturbed by its 1-d soliton solution. This is the preliminary step to studying transverse stability (or instability) of that 1-d soliton under the 2-d Zakharov flow.  Moreover, Theorem \ref{weak convergence theorem} shows that  one obtains an NLS type equation in an appropriate limit. 
	
	It is also interesting to consider the same problem for the general vectorial Zakharov system. \\
	2) Schochet-Weinstein method is also interesting if we consider a perturbation of a 3-d Zakharov system by its 2-d soliton. We think the same method should work there, too, since we only used the algebraic structure of the system and the fact that the soliton is smooth and  bounded.
	
	\textbf{Acknowledgment: }I thank J-C. Saut and N.J. Mauser for introducing me to the problem and many helpful discussions. I also thank the anonymous referees for valuable comments and criticisms that helped me to improve the manuscript.
	
	This work has been supported by the Austrian Science Foundation FWF, project SFB F41 (VICOM), DK W1245 (Nonlinear PDEs),  project I830-N13 (LODIQUAS) and grant No F65 (SFB Complexity in PDEs).
	\bibliographystyle{amsplain}
		
\end{document}